\let\mathbb=\mathds
\definecolor{ao(english)}{rgb}{0.0, 0.5, 0.0}
\pgfplotsset{compat=newest}
\newtheorem{thm}{Theorem}
\crefname{thm}{Theorem}{Theorems}
\newlist{thmenum}{enumerate}{1} %
\setlist[thmenum]{label=\roman*), ref=\thethm~\roman*)}
\newtheorem{prop}{Proposition}
\crefname{prop}{Proposition}{Propositions}
\newlist{propenum}{enumerate}{1} %
\setlist[propenum]{label=\roman*), ref=\theprop~\roman*)}
\newtheorem{lem}{Lemma}
\crefname{lem}{Lemma}{Lemmas}
\newlist{lemenum}{enumerate}{1} %
\setlist[lemenum]{label=\roman*), ref=\thelem~\roman*)}
\newtheorem{cor}{Corollary}
\crefname{cor}{Corollary}{Corollaries}
\newlist{corenum}{enumerate}{1} %
\setlist[corenum]{label=\roman*), ref=\thecor~\roman*)}
\newtheorem{rem}{Remark}
\crefname{rem}{Remark}{Remarks}
\newlist{remenum}{enumerate}{1} %
\setlist[remenum]{label=\roman*), ref=\therem~\roman*)}
\crefname{example}{Example}{Examples}
\crefname{ass}{Assumption}{Assumption}
\crefname{conj}{Conjecture}{Conjectures}
\newtheorem{defn}{Definition}
\crefname{defn}{Definition}{Definitions}
\newlist{defnenum}{enumerate}{1} %
\setlist[defnenum]{label=\roman*., ref=\thedefn~(\roman*.)}
\crefname{prob}{Problem}{Problems}
\newcommand{\Rmn}{\mathbb{R}^{n \times m}}
\newcommand{\Rnn}{\mathbb{R}^{n \times n}}
\newcommand{\rk}{\textnormal{rank}}
\newcommand{\sign}{\textnormal{sign}}
\newcommand{\transp}{\mathsf{T}}
\newcommand{\vari}[1]{\textnormal{S}^{-}[#1]}
\newcommand{\varic}[1]{\textnormal{S}^{-}_c[#1]}
\newcommand{\varizc}[1]{\textnormal{S}^{+}_c[#1]}
\newcommand{\variz}[1]{\textnormal{S}^{+}[#1]}
\newcommand{\sgc}[1]{\textnormal{SC}_{#1}}
\newcommand{\tp}[1]{{\textnormal{TP}_{#1}}}
\newcommand{\ssgc}[1]{\textnormal{SSC}_{#1}}
\newcommand{\Scm}{\textnormal{S}_c^-}
\newcommand{\Scp}{\textnormal{S}_c^+}
\newcommand{\Sm}{\textnormal{S}^-}
\newcommand{\Sp}{\textnormal{S}^+}
\newcommand{\cc}[1]{\textnormal{CC}(#1)}
\newcommand{\scc}[1]{\textnormal{SCC}(#1)}
\newcommand{\pmp}[1]{\textnormal{PMP}(#1)}
\newcommand{\pmu}[1]{\textnormal{PM}(#1)}
\newcommand{\spmu}[1]{\textnormal{SPM}(#1)}
\newcommand{\cp}[1]{\textnormal{CP}(#1)}
\newcommand{\floor}[1]{\lfloor #1 \rfloor}
\newcommand{\ceil}[1]{\lceil #1 \rceil}
\newcommand{\Conv}[1]{\mathcal{S}_{#1}}
\newcommand{\Con}[1]{{\mathcal{C}^{#1}}}
\newcommand{\Obs}[1]{{\mathcal{O}^{#1}}}
\newcommand{\supp}[1]{\text{supp}[{#1}]}
\newcommand{\vd}[1]{\textnormal{VD}_{#1}}
\newcommand{\svd}[1]{\textnormal{SVD}_{#1}}
\newcommand{\vb}[1]{\textnormal{VB}_{#1}}
\newcommand{\svb}[1]{\textnormal{SVB}_{#1}}
\newcommand{\cvd}[1]{\textnormal{CVD}_{#1}}
\newcommand{\cvb}[1]{\textnormal{CVB}_{#1}}
\newcommand{\scvb}[1]{\textnormal{SCVB}_{#1}}
\newcommand{\circmat}[1]{\text{C}^{#1}}
\newcommand{\subscript}[2]{$#1 _ #2$}
\newcommand{\compound}[2]{{#1}_{[#2]}}
\colorlet{FigColor1}{blue}
\colorlet{FigColor2}{red}
\colorlet{FigColor3}{ao(english)}
\colorlet{FigColor4}{orange}
\pgfplotsset{every axis plot/.append style={line width=1.5pt}}
\title{On Discrete-Time Periodic Monotonicity Preserving Systems \thanks{This work was conducted while the author was a Jane and Larry Sherman Fellow. It was further supported by the Israel Science Foundation (grant no.2406/22).}}
\author{Christian Grussler\thanks{C. Grussler is with the Stephen B. Klein Faculty of Aerospace Engineering, Technion -- Israel Institute of Technology, 3200003 Haifa, Israel
		{\tt cgrussler@technion.ac.il}}}
\begin{document}

\maketitle

\begin{abstract}
	Two nested classes of discrete-time linear time-invariant systems, which differ by the set of periodic signals that they leave invariant, are studied. The first class preserves the property of periodic monotonicity (period-wise unimodality). The second class is invariant to signals with at most two sign changes per period, and requires that periodic signals with zero sign changes are mapped to the same kind. Tractable characterizations for each system class are derived by the use and extension of total positivity theory via geometric interpretations. Central to our results is the characterization of sequentially convex contours via consecutive minors.  
	
	Our characterizations also extend to the loop gain of Lur'e feedback systems as the considered signals sets are invariant under common static non-linearities, e.g., ideal relay, saturation, sigmoid function, quantizer, etc. The presented developments aim to form a base for future signal-based fixed-point theorems towards the prediction of self-sustained oscillations. Our examples on relay feedback systems indicate how periodic monotonicity preservation gives rise to useful insights towards this goal. 
\end{abstract}

\section{Introduction}
Self-sustained (i.e., unforced) oscillation is a fundamental phenomenon that is observed and utilized across many areas of science and engineering. Examples range from mechanical vibrations \cite{nayfeh2024nonlinear}, over gene regulation and brain rhythms \cite{novak2008design,winfree1980geometry}, to PID-autotuning \cite{aastrom1984automatic} via relay feedback. Theoretical tools to predict such oscillations are commonly based on invariance properties facilitated by Lyapunov functions, the Poincar{\'e}-Bendixon theorem, Poincar{\'e} maps \cite{khalil2002}, their recent extensions based on cones and contraction analysis \cite{weiss2019generalization2,ofir2022sufficient,sanchez2009cones,forni2014differential} or harmonic analysis \cite{khalil2002}. Despite the variety of these existing approaches, predicting self-oscillations still remains often challenging (see, e.g., the widely communicated challenge by Karl J. \AA str\"om to characterize the class of linear systems that exhibit self-oscillations in relay feedback).

In Lur'e feedback systems, i.e., the negative feedback interconnection of a linear time-invariant (LTI) system $G$ with a static non-linearity $\psi: \mathds{R} \to \mathds{R}$ (see~\Cref{fig:lure}),
\begin{figure}
	\centering
	\tikzstyle{neu}=[draw, very thick, align = center,circle]
	\tikzstyle{int}=[draw,minimum width=1cm, minimum height=1cm, very thick, align = center]
	\begin{tikzpicture}[>=latex',circle dotted/.style={dash pattern=on .05mm off 1.2mm,
			line cap=round}]
		\node [coordinate, name=input] {};
		\node [int, right of=input, node distance=2cm] (relay) {$\psi (\cdot)$};
		\node [int, right of=relay, node distance=2cm] (system) {$G$};
		\node [coordinate][right of=system, node distance=2.7cm] (output) {};
		\node [int, below left of=system, node distance=1.5cm, shift={(0cm, -0.5cm)}] (feedback) {$-1$};
		\draw [-] (feedback) -| node[above] {} node[below] {} (input);
		\draw [->] (input) -- node[above] {$u(t)$} (relay);
		\draw [->] (relay) -- node[above] {} (system);
		\draw [-] (system) -- node[above] [name=y] {$[\Conv{g} \psi(u)](t)$} (output);
		\draw [->] (output) |- node[above] {} node[below] {} (feedback);
	\end{tikzpicture}
	
	\caption{Lur'e feedback systems with linear time-invariant system $G$ and static non-linearity $\psi$. \label{fig:lure}}
\end{figure}
the presence of a self-sustained oscillation with period $T$ is equivalent to the existence of a $T$-periodic signal $u^\ast$ satisfying 
\begin{equation}
	u^\ast = -\Conv{g}\psi(u^\ast), \label{eq:fixed_point} 
\end{equation}
where $[\Conv{g}u](t) = (g\ast u)(t)$ denotes the convolution operator resulting from the linear system's impulse response $g$. In other words, the prediction of self-oscillations is a fixed-point problem in the domain of periodic functions, which motivates the establishment of signal-based fixed-point theorems. This work aims at facilitating such developments by characterizing systems that leave certain sets of periodic signals invariant. Fixed-point theorems are commonly based on invariant set assumptions (see, e.g., \cite{granas2003fixed,zeidler1986nonlinear}). Concretely, we consider two subsets of $T$-periodic signals:
\begin{enumerate}[label=(\subscript{S}{{\arabic*}})]
	\item  \emph{Periodically monotone} ($\pmu{T}$) signals, i.e., signals with an unimodal (aka., single-peaked) period. \label{item:signal_set1}
	\item Signals with a variation (i.e, number of sign changes) of at most two per period. \label{item:signal_set2}
\end{enumerate}
This choice of sets resembles the predictions made by the describing function analysis (which approximates $u^\ast$ by a sinusoid) \cite{khalil2002}, as well as several other studies, e.g., on relay feedback systems (see~ \cite{goncalves2001global,rabi2021relay,megretski1996global,chaffey2024amplitude}).

The present work derives tractable characterizations of discrete-time linear time-invariant  (DTLTI) systems that leave either of these sets invariant by the use and extension of total positivity theory \cite{karlin1968total}, i.e., the study of variation bounding/diminishing linear operators -- a framework that has only recently found its application in the systems theory community (see, e.g.,  \cite{grussler2020variation,grussler2020balanced,margaliot2018revisiting,roth2024system,bar2023compound,ofir2022sufficient,marmary2025tractable,alseidi2021discrete}). In the first case, we refer to these systems as \emph{periodic monotonicity preserving} ($\pmp{T}$), where our results establish a complete characterization of all such convolution operators. In the second case, under a mild strictness assumptions, we also find such tractable certificates for the classes of so-called \emph{$2$-periodically variation diminishing} ($\cvd{2}(T)$) systems. Since the set of $\pmu{T}$ signals is included in \ref{item:signal_set2}, it holds that $\cvd{2}(T) \subset \pmp{T}$, 
i.e., greater generality of the signal shape comes at the cost of more stringent requirements on the system. Finally, both signal sets are preserved by several common static non-linearities, e.g., ideal relay, saturation, quantizer, sigmoid function, etc. Thus, in combination with our system classes, this provides a tractable class of Lur'e feedback systems for which our signal sets are left invariant by its \emph{loop gain} $u \mapsto -\Conv{g}\psi(u)$. 

Our results extend the literature in several crucial aspects: firstly, despite the long history of periodic monotonicity preserving kernels (see, e.g., the monograph \cite{karlin1968total} and \cite{ruscheweyh1992preservation}), the discrete-time case has not been considered, yet. Secondly, employing existing total positivity theory \cite{karlin1968total}, one can directly characterize $\cvd{2}(T)$ systems in terms of the \emph{$3$-sign-consistency} of a corresponding cyclic matrix, i.e., all of its minors of order $3$ share the same sign. However, checking all such minors quickly becomes intractable as $T$ grows. This work resolves this issue by showing that it suffices to only check a very small subset of these minors. In particular, these minors can be expressed via the impulse response of a related LTI (compound) system and by that allows us to take advantage of earlier aperiodic investigations \cite{grussler2020variation,grussler2022internally}.
Lastly, contrary to the analysis-based investigations in continuous-time \cite{ruscheweyh1992preservation}, this work provides a complementary perspective by relying on total positivity theory and its geometric interpretations. This viewpoint allows us to derive a tractable sufficient certificate for the verification of $3$-sign consistency from the geometric property of $T$-periodic convex contours, which is central to our system characterizations and establishes a new connection to $\cvd{2}$. We also introduce and study so far neglected strict notions, which differ in how the variation is computed in the event of zero elements. This plays a central role in future applications as seen in an initial studies of self-oscillations relay feedback systems \cite{tong2025selfsustained}. Finally, a shortened version of this work, without proofs and lacking several essential parts, has been presented in \cite{grussler2025characterization}.

The remainder of the paper is organized as follows. After some preliminaries in \Cref{sec:prelim}, we review the theory of cyclic variation bounding and total positivity \Cref{sec:vardim}. Subsequently, we derive our main result on the characterization of sequentially convex contours in \Cref{sec:convex_contour} and provide our convolution operator characterizations in \Cref{sec:cyc_var_bound}. In \Cref{sec:LTI_ex}, these results are applied to relay feedback systems. Conclusions are drawn in \Cref{sec:conc} and most proofs are left to the \cref{sec:appendix}. 

\section{Preliminaries}\label{sec:prelim}
This section introduces notations and concepts that are essential for subsequent discussions and derivations. 

\subsection{Notations}
\subsubsection{Sets}
We write $\mathds{Z}$ for the set of integers and $\mathds{R}$ for the set of reals, with  $\mathds{Z}_{\ge 0}$ and $\mathds{R}_{\ge 0}$ standing for the respective subsets of nonnegative elements. Corresponding notations are also used for subsets starting from non-zero values, strict inequality as well as reversed inequality signs. For $k,l \in \mathds{Z}$, we use $(k:l) = \{k,k+1,\dots,l\}$ if $k \leq l$ and $(k:l) = \{k,k-1,\dots,l\}$ otherwise. The set of increasing $r$-tuples with elements in $(1:n)$ is defined by $
	\mathcal{I}_{n,r} := \{ v = \{v_1,\dots,v_r\} \subset \mathds{N}: 1\leq v_1 < v_2 < \dots < v_r \leq n \}$
where the $i$-th element of this set is a result of \emph{lexicographic ordering}. 

\subsubsection{Sequences}
For a sequence $x: \mathds{Z} \to \mathds{R}$ that fulfills $\sup_{i \in \mathds{Z}} |x(i)| < \infty$ or $\sum_{i \in \mathds{Z}} |x(i)| < \infty$, we write $x \in \ell_{\infty}$ or $x \in \ell_1$, respectively. The \emph{support of $x$} is defined by $\supp{x} := \{t \in \mathds{Z}:\; x(t) \neq 0\}$ and the
\emph{forward difference} of $x$ by $\Delta x(t) := x(t+1) - x(t)$. The restriction of $x$ to $(k:l)$ is denoted by $x(k:l)$. $x$ is called \emph{$T$-periodic}, $T \in \mathds{Z}_{\geq 2}$, if $T$ is the smallest integer (i.e., the fundamental period) such that $x(i) = x(i+T)$ for all $i \in \mathds{Z}$. The set of all $T$-periodic sequences is denoted by $\ell_\infty(T)$. The sequence of all ones is denoted by $\mathbf{1}$ and the vector of all ones in $\mathds{R}^n$ by $\mathbf{1}_n$. 
\subsubsection{Matrices}
For matrices $X = (x_{ij}) \in \Rmn$, we say that $X$ is
\emph{nonnegative},
$X \geq 0$ 
or $X \in \Rmn_{\geq 0}$
if all elements $x_{ij} \in \mathbb{R}_{\geq 0}$. Corresponding notations are also used for matrices with strictly positive entries, reversed inequality signs. %
If $X\in \Rnn$, then $\sigma(X) = \{\lambda_1(X),\dots,\lambda_n(X)\}$ denotes its \emph{spectrum}, where the eigenvalues are ordered by descending absolute value, i.e., $\lambda_1(X)$ is the eigenvalue with the largest magnitude, counting multiplicity. If the magnitude of two eigenvalues coincides, we sub-sort them by decreasing real part. %
A \emph{(consecutive) $j$-minor} of $X \in \Rmn$ is a minor that is constructed of (consecutive) $j$ columns and $j$ rows of $X$. The submatrix with rows $\mathcal{I} \subset (1:n)$ and columns $\mathcal{J} \subset (1:m)$ is written as $X_{(\mathcal{I},\mathcal{J})}$, where we also use the notions
$X_{(:,\mathcal{J})} := X_{((1:n),\mathcal{J})}$, $X_{(\mathcal{I},:)} := X_{(\mathcal{I},(1:m))}$. In case of subvectors, we simply write $x_\mathcal{I}$. 

We define the $(i,j)$-th entry of the so-called \emph{$r$-th multiplicative compound matrix} 
$\compound{X}{r} \in \mathbb{R}^{\binom{n}{r} \times \binom{m}{r}}$ of 
$X \in \Rmn$ by 
$\det(X_{(\mathcal{I},\mathcal{J})})$,
where $\mathcal{I}$ is the $i$-th and $\mathcal{J}$ is the $j$-th element in $\mathcal{I}_{n,r}$ and $\mathcal{I}_{m,r}$, respectively. 
For example, if $X \in \mathbb{R}^{3 \times 3}$, then
\begin{align*}
\compound{X}{2} = \begin{bmatrix}
		  \det(X_{(\{1,2\},\{1,2\})}) & \det(X_{(\{1,2 \},\{1,3\})}) & \det(X_{(\{1,2 \},\{2,3\})})\\
		 \det(X_{(\{1,3 \},\{1,2 \})}) & \det(X_{(\{1,3 \},\{1,3\})}) & \det(X_{(\{1,3 \},\{2,3\})})\\
		 \det(X_{(\{2,3 \},\{1,2 \})}) & \det(X_{(\{2,3 \},\{1,3\})}) & \det(X_{(\{2,3 \},\{2,3\})})\\
		 \end{bmatrix}. \normalsize
\end{align*}
The following properties of the multiplicative compound matrix will be elementary to our discussion (see, e.g., \cite[Section~6]{fiedler2008special} and \cite[Subsection~0.8.1]{horn2012matrix}).
\begin{lem}\label{lem:compound_mat}
	Let $X \in \mathbb{R}^{n \times p}$, $Y \in \mathbb{R}^{p \times m}$ and $1 \leq r  \leq \min \{n,m,p \}$.
	\begin{lemenum}
		\item $\compound{(XY)}{r} = \compound{X}{r}\compound{Y}{r}$ (Cauchy-Binet formula). \label{eq:Cauchy_Binet}
		\item $\sigma(\compound{X}{r}) = \{\prod_{i \in I} \lambda_i(X): I \in \mathcal{I}_{n,r} \}$.
		\item $\compound{X^\transp}{r} = (\compound{X}{r})^\transp$.
	\end{lemenum} 
\end{lem}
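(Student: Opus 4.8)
The plan is to establish the three items in the order (iii), (i), (ii), since each one builds on the previous. Item (iii) is pure bookkeeping: for $\mathcal{I} \in \mathcal{I}_{m,r}$ and $\mathcal{J} \in \mathcal{I}_{n,r}$ one has $(X^\transp)_{(\mathcal{I},\mathcal{J})} = (X_{(\mathcal{J},\mathcal{I})})^\transp$, hence $\det((X^\transp)_{(\mathcal{I},\mathcal{J})}) = \det(X_{(\mathcal{J},\mathcal{I})})$; since transposition simultaneously swaps the index families $\mathcal{I}_{m,r}$ and $\mathcal{I}_{n,r}$ that label the rows and columns of the compound, this says exactly $\compound{(X^\transp)}{r} = (\compound{X}{r})^\transp$, with no further computation needed.

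For (i), I would reduce the matrix identity to the classical scalar Cauchy--Binet formula applied entrywise. Fix $\mathcal{I} \in \mathcal{I}_{n,r}$ and $\mathcal{J} \in \mathcal{I}_{m,r}$ indexing a row and a column of $\compound{(XY)}{r}$. Then $(XY)_{(\mathcal{I},\mathcal{J})} = X_{(\mathcal{I},:)}\, Y_{(:,\mathcal{J})}$ is a product of an $r \times p$ matrix with a $p \times r$ matrix, so the standard Cauchy--Binet formula gives
\[
\det\big((XY)_{(\mathcal{I},\mathcal{J})}\big) \;=\; \sum_{\mathcal{K} \in \mathcal{I}_{p,r}} \det\big(X_{(\mathcal{I},\mathcal{K})}\big)\, \det\big(Y_{(\mathcal{K},\mathcal{J})}\big),
\]
and the right-hand side is precisely the matching entry of the matrix product $\compound{X}{r}\compound{Y}{r}$, the sum over $\mathcal{K}$ being the sum over the shared index of that product. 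This uses $r \le p$; for larger $r$ the compounds degenerate, consistent with the hypothesis $r \le \min\{n,m,p\}$.

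For (ii), the idea is to conjugate into Schur form and read off a triangular compound. Over $\mathds{C}$ write $X = U T U^{-1}$ with $U$ unitary and $T$ upper triangular carrying $\lambda_1(X),\dots,\lambda_n(X)$ on its diagonal. Applying (i) twice together with $\compound{I}{r} = I$ yields $\compound{X}{r} = \compound{U}{r}\,\compound{T}{r}\,(\compound{U}{r})^{-1}$, so $\compound{X}{r}$ and $\compound{T}{r}$ are similar and share a spectrum. It then remains to show $\compound{T}{r}$ is upper triangular with diagonal entries $\prod_{i \in \mathcal{I}}\lambda_i(X)$, $\mathcal{I} \in \mathcal{I}_{n,r}$. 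The diagonal claim is clear since $T_{(\mathcal{I},\mathcal{I})}$ is itself upper triangular with diagonal $(\lambda_i)_{i \in \mathcal{I}}$. For triangularity, if $\mathcal{J}$ precedes $\mathcal{I}$ lexicographically, let $k$ be the first position where they differ, so $j_\ell = i_\ell$ for $\ell < k$ and $j_k < i_k$; then the rows indexed $k,\dots,r$ and columns indexed $1,\dots,k$ of $T_{(\mathcal{I},\mathcal{J})}$ satisfy $i_\ell \ge i_k > j_k \ge j_{\ell'}$ for all such $\ell,\ell'$, so they form an $(r-k+1)\times k$ block strictly below the diagonal of $T$, hence zero; since $(r-k+1)+k = r+1 > r$, this forces $\det(T_{(\mathcal{I},\mathcal{J})}) = 0$, and the eigenvalues of $\compound{X}{r}$ are exactly the claimed products.

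I expect the only genuinely delicate point to be this last one: confirming that the lexicographic labelling actually makes $\compound{T}{r}$ upper triangular, which hinges on locating the above zero block from the structure of the index sets and invoking the elementary fact that an $r \times r$ matrix containing an $a \times b$ zero submatrix with $a+b > r$ is singular. Everything else reduces to scalar Cauchy--Binet and routine similarity manipulations; alternatively, all three facts may simply be cited from \cite{fiedler2008special} or \cite{horn2012matrix}.
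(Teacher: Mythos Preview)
Your proof is correct and complete; each of the three arguments is the standard textbook route. However, the paper does not actually prove this lemma at all: it states the three properties and simply refers the reader to \cite[Section~6]{fiedler2008special} and \cite[Subsection~0.8.1]{horn2012matrix}. So there is no paper proof to compare against, and your own closing remark that ``all three facts may simply be cited'' is exactly what the authors do. Your detailed verification of the triangularity of $\compound{T}{r}$ under lexicographic ordering is correct and is the one point worth spelling out if you choose to include a proof rather than a citation.
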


The \emph{cyclic matrix} corresponding to $g \in \ell_\infty(T)$ is defined by 
\begin{equation*}
	\circmat{g} := \begin{bmatrix}
		g(0) & g(T-1) & \dots & g(1) \\
		g(1) & g(0) & \dots & g(2)\\
		\vdots & \vdots & &\vdots\\
		g(T-1) & g(T-2) & \dots & g(0)
	\end{bmatrix}
\end{equation*}

\subsubsection{Functions}
The indicator function of a subset $\mathcal{S} \subset \mathds{R}$ is denoted by 
\begin{equation*}
\mathbb{1}_\mathcal{S}(x) := \begin{cases}
	1 & x \in \mathcal{S}\\
	0 & x \not \in \mathcal{S},
\end{cases}	
\end{equation*}
the \emph{Heaviside function} by $s(t) := \mathds{1}_{\mathbb{R}_{\geq 0}}(t)$, the \emph{unit pulse function} by $\delta(t):=\mathds{1}_{\{0\}}(t)$ and the \emph{sign function} by $\sign(t) := \mathds{1}_{\mathbb{R}_{> 0}}(t) - \mathds{1}_{\mathbb{R}_{< 0}}(t)$. Further, for any $x \in \mathbb{R}$, we define the \emph{floor} of $x$ as $\lfloor x \rfloor = \max \{ n \in \mathbb{Z} : n \leq x \}$ and the \emph{ceiling} of $x$ as $\lceil x \rceil = \min \{ n \in \mathbb{Z} : n \geq x \}$.
\subsection{Linear Dynamic Systems}
Discrete-time linear time-invariant (DTLTI) systems with input signal $u \in \ell_\infty$ and output signal $y \in \ell_{\infty}$ are described by the \emph{impulse-response} $g \in \ell_1$, i.e., the output of $u = \delta$ and the \emph{convolution operator} 
\begin{align}
	\label{eq:def_conv}
	y(t) = [\mathcal{S}_gu](t) := 	(g \ast u)(t) := \sum_{\tau = -\infty}^\infty g(t-\tau) u(\tau), \;	t \in \mathds{Z},
\end{align}
where in case of $u \in \ell_{\infty}(T)$, this also reads as
\begin{equation}
	\begin{aligned}
			[\Conv{g} u](t) &= \sum_{\tau =-\infty}^\infty g(t-\tau)u(\tau) = \sum_{k = -\infty}^\infty \sum_{\tau = kT}^{(k+1)T-1} g(t-\tau)u(\tau)\\
		&=  \sum_{k = -\infty}^\infty \sum_{\tau = 0}^{T-1} g(t-kT-\tau)u(kT+\tau) = \sum_{\tau = 0}^{T-1} g_T(t-\tau)u(\tau)
	\end{aligned} \label{eq:periodic_conv}
\end{equation}
with
\begin{align}
	g_T(t) := \sum_{k=-\infty}^{\infty}g(t-kT) \in \ell_{\infty}(T). \label{eq:periodic_sum}
\end{align}
denoting the \emph{periodic summation of $g$}. Therefore, if $g \in \ell_\infty(T)$, we also define %
\begin{align}
		\label{eq:def_conv_cyc}
[\Conv{g}u](t) := (g \ast u)(t) :=\sum_{\tau = 0}^{T-1} g(t-\tau) u(\tau), \;	t \in (0:T-1),
\end{align} 
for $u \in \ell_\infty(T)$, which can equivalently be identified with
\begin{equation}
	y(0:T-1) = \circmat{g} u(0:T-1). \label{eq:cyc_mat_u_y}
\end{equation}
This also implies the following simple composition rules:
\begin{lem}\label{lem:psum_compo}
	Let $g_1,g_2 \in \ell_1$. Then, it holds that
	\begin{lemenum}
		\item $(g_1 \ast g_2)_T = {g_1}_T \ast {g_2}_T$
		\item $(g_1 + g_2)_T =  {g_1}_T + {g_2}_T$
	\end{lemenum}
\end{lem}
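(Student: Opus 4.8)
\emph{Proof plan.} Part (ii) is immediate and I would dispose of it first: for every $t\in\mathds{Z}$,
\[
(g_1+g_2)_T(t)=\sum_{k=-\infty}^{\infty}\bigl(g_1(t-kT)+g_2(t-kT)\bigr)=\sum_{k}g_1(t-kT)+\sum_{k}g_2(t-kT)={g_1}_T(t)+{g_2}_T(t),
\]
where the splitting of the series is legitimate because $g_1,g_2\in\ell_1$ makes both pieces absolutely convergent. So the real content is part (i).

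For (i) the plan is a direct computation built on a regrouping of a triply infinite sum by residue classes modulo $T$. First I would record that $g_1\ast g_2\in\ell_1$, by Young's inequality $\|g_1\ast g_2\|_{\ell_1}\le\|g_1\|_{\ell_1}\|g_2\|_{\ell_1}$, so that ${(g_1\ast g_2)}_T$ is well defined in $\ell_\infty(T)$ in the first place. Then, expanding both the periodic summation \eqref{eq:periodic_sum} and the convolution \eqref{eq:def_conv},
\[
(g_1\ast g_2)_T(t)=\sum_{k\in\mathds{Z}}\sum_{s\in\mathds{Z}}g_1(t-kT-s)\,g_2(s),
\]
I would write the summation index $s$ uniquely as $s=jT+r$ with $r\in(0:T-1)$ and $j\in\mathds{Z}$, and then substitute $m=k+j$ (for each fixed $j$ this is a bijection of $\mathds{Z}$ onto itself). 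Since $t-kT-jT-r=t-r-mT$, the sum separates:
\[
(g_1\ast g_2)_T(t)=\sum_{r=0}^{T-1}\Bigl(\sum_{m\in\mathds{Z}}g_1(t-r-mT)\Bigr)\Bigl(\sum_{j\in\mathds{Z}}g_2(r+jT)\Bigr)=\sum_{r=0}^{T-1}{g_1}_T(t-r)\,{g_2}_T(r),
\]
which is exactly $\bigl[{g_1}_T\ast{g_2}_T\bigr](t)$ according to the cyclic-convolution formula \eqref{eq:def_conv_cyc}, using that ${g_1}_T,{g_2}_T\in\ell_\infty(T)$.

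The only step that needs genuine justification — and hence the main obstacle, though a mild one — is the interchange and regrouping of the doubly/triply infinite sums above. I would settle it by Tonelli's theorem for series together with the $\ell_1$ hypothesis: $\sum_{k,s}\lvert g_1(t-kT-s)\rvert\,\lvert g_2(s)\rvert=\sum_{s}\lvert g_2(s)\rvert\sum_{k}\lvert g_1(t-kT-s)\rvert\le\|g_1\|_{\ell_1}\|g_2\|_{\ell_1}<\infty$, so every reindexing and regrouping used above is value-preserving. As a sanity check, (i) can also be obtained indirectly from \eqref{eq:periodic_conv}: for arbitrary $u\in\ell_\infty(T)$, associativity of convolution gives $\mathcal{S}_{g_1\ast g_2}u=\mathcal{S}_{g_1}(\mathcal{S}_{g_2}u)$; applying \eqref{eq:periodic_conv} on the left and twice on the right (noting $\mathcal{S}_{g_2}u\in\ell_\infty(T)$) rewrites this as an identity between the cyclic convolutions of $u$ with $(g_1\ast g_2)_T$ and with ${g_1}_T\ast{g_2}_T$; taking $u$ to run over the $T$-periodic unit pulses forces the two $T$-periodic kernels to coincide. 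This alternative trades the index bookkeeping for the elementary fact that a product of circulant matrices is the circulant matrix of the cyclic convolution of their first columns.
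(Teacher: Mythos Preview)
Your proof is correct. The paper does not provide an explicit proof of this lemma at all: it simply states the composition rules as an immediate consequence of the periodic-convolution identity \eqref{eq:periodic_conv} and the circulant representation \eqref{eq:cyc_mat_u_y}. Your ``indirect'' alternative at the end --- using $\mathcal{S}_{g_1\ast g_2}u=\mathcal{S}_{g_1}(\mathcal{S}_{g_2}u)$ on $u\in\ell_\infty(T)$ together with \eqref{eq:periodic_conv} --- is precisely the route the paper has in mind, while your main argument via explicit reindexing modulo $T$ is a more detailed but equally valid derivation.
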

If $g \in \ell_{\infty}$ is the  impulse response to a causal DTLTI system, then 
\begin{equation*}
	G(z) = \sum_{k=0}^{\infty} g(k)z^{-k} = \frac{r \prod_{i=1}^{m} (z-z_i)}{\prod_{j=1}^{n}(z-p_i)}
\end{equation*}
is its \emph{transfer function}, where $r \in \mathbb{R}$, and $p_i$ and $z_i$ are referred to as \emph{poles} and \emph{zeros}, and $n -m$ as the \emph{relative degree}. We assume that all $p_i$ and $z_i$ are distinct. The tuple $(A,b,c,d)$ is called a \emph{state-space realization} of $G(z)$ if 
\begin{equation}
	\begin{aligned}
		x(t+1) &= Ax(t) + bu(t)\\
		y(t) &= cx(t) + d u(t)
	\end{aligned} \label{eq:ss}
\end{equation}
$A \in \mathds{R}^{N \times N}$, $b,c^\transp \in \mathds{R}^N$, $d \in \mathds{R}$ and $|\lambda_1(A)| < 1$ (asymptotic stability) with $g(t) = cA^{t-1}b s(t-1) + d\delta(t)$. If the relative degree of $G(z)$ is non-zero, then $d =0$ and we simply write $(A,b,c)$. 
For $t,j \in \mathds{Z}_{>0}$, we will make use of the \emph{Hankel matrix}   
\begin{subequations} 
	\begin{align}
		H_g(t,j) &:= \begin{pmatrix}
			g(t) & g(t+1) & \dots & g(t+j-1)\\
			g(t+1) & g(t+2) & \dots & g(t+j)\\
			\vdots & \vdots & \ddots   & \vdots \\
			g(t+j-1)   & g(t+j) & \dots & g(t+2(j-1))\\
		\end{pmatrix} = \Obs{j}(A,c) A^{t-1} \Con{j}(A,b) \label{eq:hankel_mat_decomp}
	\end{align}
	where 
	\begin{align}
		\label{eq:truncated_con}
		\Con{j}(A,b) &:= \begin{pmatrix}
			b & Ab & \dots & A^{j-1}b
		\end{pmatrix}	\\
		\label{eq:truncated_obs}
		\Obs{j}(A,c) &:= \Con{j}(A^\transp,c^\transp)^\transp.
	\end{align}
\end{subequations}
$(A,b,c,d)$ is a \emph{minimal realization} of $G(z)$, if $A \in \mathds{R}^{n \times n}$ or equivalently, by  \cite[Theorem~ 3.16]{zhou1996robust} $\rk(\mathcal{O}^n(A,c)) = \rk(\mathcal{C}^n(A,b)) = n$. Further, we call an LTI system \emph{(strictly) externally positive}, if $g(t) (>) \geq 0$ for all $t \geq \min\{\supp{g}\}$ and \emph{(strictly) externally negative}, if $-g$ is (strictly) externally positive.

Finally, the periodic summation of $g$ computes in terms of $(A,b,c,d)$ as  
\begin{equation}
	\begin{aligned}
		g_T(t) &= \sum_{k= 0}^\infty g(t+kT) = (d  + \sum_{k=1}^\infty cA^{kT}b) \delta(t) + cA^{t-1} \sum_{k=0}^\infty cA^{kT} b s(t-1) \\ 
		&= (d + cA^{T-1}(I_n-A^T)^{-1}b)\delta(t) + cA^{t-1}(I_n-A^T)^{-1}b s(t-1), 
		\; t \in (0:T-1),
	\end{aligned}\label{eq:g_T_ss}  
\end{equation}
where the last equation results from the well-known \emph{Neumann series}. In particular, if $g(t) = 0$ for all $t \leq l$,  then $g_T(0:T+l-1) = \bar{g}(0:T+l-1)$, where $\bar{g}$ is the impulse response of the system
\begin{equation}
	(\bar{A},\bar{b},\bar{c},\bar{d}) := (A,(I_n-A^T)^{-1}b,c,d + cA^{T-1}(I_n-A^T)^{-1}b). \label{eq:ss_psum}
\end{equation}

\section{Cyclic Variation Bounding Properties}
\label{sec:vardim}
The goal of the present study is to provide tractable characterization of convolution operators with cyclic variation diminishing/bounding properties. In order to define and study these, we will need the following notions and background results. 

\subsection{Variation Bounding} 
\begin{defn}[Variation]
	The \emph{variation} of a vector $u \in \mathds{R}^n$ is defined as the number of sign-changes in $u$, i.e., $\vari{u} := \sum_{i = 1}^{m-1} \mathbb{1}_{\mathbb{R}_{< 0}}(\tilde{u}_i \tilde{u}_{i+1})$, $\vari{0} := -1$, where $\tilde{u} \in \mathds{R}^m$ is the vector that results from deleting all zeros in $u$. Further, we define the \emph{strict variation} as $\variz{u} := \limsup_{p \to u} \vari{p}$, i.e., the maximum number of possible sign changes in $u$ by allowing each zero in $u$ to be replaced by $\pm 1$. The same notations are also used for sequences $u \in \ell_{\infty}$. 
\end{defn}
Although $\vari{u} \leq \variz{u}$, equality does not necessarily hold, e.g., $\vari{\begin{pmatrix}
		1 & 0 & 2
\end{pmatrix}} = \vari{\begin{pmatrix}
		1 & 2
\end{pmatrix}} = 0$, but $\variz{\begin{pmatrix}
		1 & 0 & 2
\end{pmatrix}} = \vari{\begin{pmatrix}
		1 & -1 & 2
\end{pmatrix}}  = 2$.
In particular, the following limit inequalities hold \cite[Lemma~5.1.1]{karlin1968total}:  
\begin{lem}\label{lem:lim_var}
	Let $x: \mathds{R} \to \mathds{R}^n$ be such that $\lim_{t \to \infty} x(t) = u \in \mathds{R}^n$. Then, $\vari{u} \leq \liminf_{t \to \infty} \variz{x(t)}$ and $\limsup_{t \to \infty} \vari{x(t)} \leq \variz{u}$. 
\end{lem}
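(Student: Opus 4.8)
The plan is to prove the two inequalities separately; both rely on only two ingredients: the componentwise convergence $x_i(t)\to u_i$, and the elementary observation that the number of sign changes of a subvector never exceeds that of the full vector (equivalently, $\vari{\cdot}$ does not decrease when entries are inserted). I will also use the characterization of $\variz{\cdot}$ recorded in the definition: $\variz{u}$ equals the largest value of $\vari{\hat u}$ over all $\hat u$ obtained from $u$ by replacing each zero entry by $+1$ or $-1$.

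For the first inequality I would first dispose of the trivial case $u=0$ (where $\vari{u}=-1\le\variz{x(t)}$ always) and otherwise set $k:=\vari{u}\ge 0$. Fix indices $i_0<i_1<\dots<i_k$ in $(1:n)$ on which $u$ is nonzero with strictly alternating signs. Since every $u_{i_j}\ne 0$ and $x_{i_j}(t)\to u_{i_j}$, there is a time $t_0$ beyond which each $x_{i_j}(t)$ is nonzero with the same sign as $u_{i_j}$; hence the subvector of $x(t)$ on $i_0,\dots,i_k$ already displays $k$ sign changes, so $\vari{x(t)}\ge k$ and a fortiori $\variz{x(t)}\ge k$ for all $t\ge t_0$. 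Taking $\liminf_{t\to\infty}$ then yields $\liminf_{t\to\infty}\variz{x(t)}\ge k=\vari{u}$ (in fact with $\vari{x(t)}$ in place of $\variz{x(t)}$, which is stronger than what is claimed).

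For the second inequality I would argue by contradiction. Put $m:=\variz{u}$ and suppose there is a sequence $t_\ell\to\infty$ with $\vari{x(t_\ell)}\ge m+1$ for all $\ell$. For each $\ell$, extract indices $i_0^\ell<\dots<i_{m+1}^\ell$ in $(1:n)$ and an alternating sign pattern $\varepsilon_0^\ell,\dots,\varepsilon_{m+1}^\ell\in\{-1,+1\}$ with $\sign\!\big(x_{i_p^\ell}(t_\ell)\big)=\varepsilon_p^\ell$. The crucial point is that these data depend on $\ell$; since $(1:n)$ has only finitely many $(m+2)$-element subsets and there are only finitely many alternating sign patterns, the pigeonhole principle provides a fixed index tuple $i_0<\dots<i_{m+1}$ and a fixed alternating pattern $\varepsilon_0,\dots,\varepsilon_{m+1}$ realized for infinitely many $\ell$. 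Restricting to that subsequence and letting $\ell\to\infty$ gives $\sign(u_{i_p})\in\{0,\varepsilon_p\}$ for each $p$. Now form $\hat u$ from $u$ by replacing each vanishing coordinate $u_{i_p}$ by $\varepsilon_p$ (and every remaining zero of $u$ arbitrarily): then $\sign(\hat u_{i_p})=\varepsilon_p$ for all $p$, so $\vari{\hat u}\ge m+1$, hence $\variz{u}\ge m+1$, contradicting $m=\variz{u}$. Therefore $\limsup_{t\to\infty}\vari{x(t)}\le m=\variz{u}$.

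I expect the only genuine obstacle to be bookkeeping rather than anything conceptual: namely the fact that the indices witnessing the sign changes of $x(t_\ell)$ may move with $\ell$, which is precisely why the finiteness/pigeonhole step must precede passing to the limit. The trivial cases ($u=0$, or $n\le 1$), the monotonicity of $\vari{\cdot}$ under subvectors, and the passage from the statement for $x:\mathbb{R}\to\mathbb{R}^n$ to the sequential formulation (replace ``$t\to\infty$'' by an arbitrary sequence $t_\ell\to\infty$ and argue along subsequences) are all routine.
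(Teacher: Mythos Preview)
Your argument is correct. Both inequalities are handled cleanly: the first by locating a fixed alternating-sign subvector of $u$ and using componentwise convergence, the second by a pigeonhole extraction of a fixed index tuple and sign pattern along a subsequence before passing to the limit. The bookkeeping issues you flag (moving witness indices, the trivial case $u=0$) are exactly the ones that need attention, and you address them properly.

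There is nothing to compare against: the paper does not prove this lemma at all but merely quotes it from Karlin's monograph \cite[Lemma~5.1.1]{karlin1968total}. Your self-contained elementary argument is therefore a genuine addition rather than a reworking of the paper's reasoning.
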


Based on these definitions, we can also define their so-called cyclic analogues. 
\begin{defn}[Cyclic Variation]
	For a vector $u \in \mathds{R}^n$, its \emph{cyclic variation} is defined by $$
		\varic{u} := \sup_{i \in (1:n)}  \vari{\begin{pmatrix}
				u_i & u_{i+1} & \dots & u_n & u_1 &\dots & u_i
		\end{pmatrix}}$$
	and its \emph{strict cyclic variation} by $
		\varizc{u} := \sup_{i \in (1:n)}  \variz{\begin{pmatrix}
				u_i & u_{i+1} & \dots & u_n & u_1 &\dots & u_i
		\end{pmatrix}}$
\end{defn}
It is important to note that $\varic{u}$ and $\varizc{u}$ are always \emph{even} for $u \neq 0$, because
\begin{align}
	\varic{u} &= \begin{cases}
		\vari{u} +1 & \text{if } \vari{u} \text{ is odd},\\
		\vari{u} & \text{if } \vari{u} \text{ is even},
	\end{cases} \label{eq:even_var}
\end{align}
and analogously for $\varizc{u}$. With slight abuse of notation, we also apply these definitions to $u \in \ell_\infty(T)$ and mean that we evaluate the cyclic variations of $u(0:T-1)$, which via \cref{eq:even_var} is uniquely determined by $\vari{u(0:T-1)}$ and $\variz{u(0:T-1)}$, respectively. 

\subsubsection{Variation Diminishing \& Bounding}
We are ready, now, to define the notions of variation diminishing/bounding linear mappings.  
\begin{defn}[$k$-variation bounding/diminishing]
	\label{def:vb_k}
	For $X \in \Rmn$ and $k \in (0:m-1)$, the linear map $u \mapsto Xu$ is said to be 
\begin{defnenum}
	\item  \emph{(strictly) $k$-variation bounding} ($\textnormal{(S)}\vb{k}$), if for all $u\neq 0$ with $\vari{u} \leq k$ it holds that $\vari{Xu} \leq k$ ($\variz{Xu} \leq k$).
\item \emph{(strictly) $k$-variation diminishing} ($\textnormal{(S)}\vd{k}$), if $X$ is $\textnormal{(S)}\vb{j}$ for all $j \in (0:k)$.
\end{defnenum}
	The same notions are also used for operators $X: \ell_{\infty} \to \ell_\infty$ and $k \in \mathds{Z}_{\geq 0}$. For brevity, we write $X \in \textnormal{(S)}\vb{k}$ and $X \in \textnormal{(S)}\vd{k}$, respectively.
\end{defn}
By replacing the variations in \cref{def:vb_k} with their cyclic counter-parts, we also get the corresponding definitions of \emph{(strictly) cyclic $2k$-variation bounding} $(\text{(S)}\cvb{2k})$ and \emph{(strictly) cyclic $2k$-variation diminishing} $(\textnormal{(S)}\cvd{2k})$, where in the operator case, we consider $X: \ell_{\infty}(T) \to \ell_{\infty}(T)$. The sufficiency of only considering even cyclic cases is due to \cref{eq:even_var}, which also yields the following direct equivalences:
\begin{lem}\label{lem:equiv_cvb_cvd}
	Let $X \in \Rmn$, $u \in \mathds{R}^m$ and $k \in \mathds{Z}_{\geq 0}$. Then, the following hold:
	\begin{lemenum}
		\item $\vari{u} \leq 2k$ $(\variz{u}) \leq 2k)$ if and only if $\varic{u}\leq 2k$ $(\varizc{u}) \leq 2k)$. \label{item:var_varc} 
		\item $X \in \textnormal{(S)}\cvd{2k}$ if and only if $X \in \textnormal{(S)}\cvb{2j}$ for all $j = (0:k)$. \label{item:cvd_2k}
		\item $X \in \textnormal{(S)}\cvb{2k}$ if and only if $X \in \textnormal{(S)}\vb{2k}$. \label{item:cvb_vb}
	\end{lemenum}
\end{lem}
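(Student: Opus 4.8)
The plan is to reduce the entire lemma to the single identity \cref{eq:even_var} together with its strict analogue noted immediately after it; in particular these give the sandwiches $\vari{u}\le\varic{u}\le\vari{u}+1$ and $\variz{u}\le\varizc{u}\le\variz{u}+1$.

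For part~(i) I would argue by parity. For the ``only if'' direction, assume $\vari{u}\le 2k$: if $\vari{u}$ is even then $\varic{u}=\vari{u}\le 2k$; if $\vari{u}$ is odd then in fact $\vari{u}\le 2k-1$, so $\varic{u}=\vari{u}+1\le 2k$. The ``if'' direction is immediate from $\vari{u}\le\varic{u}$. The statement for the strict variations $\variz{}$, $\varizc{}$ is proved by the same two lines with \cref{eq:even_var} replaced by its strict analogue.

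For part~(ii) I would first unfold the definition: replacing the variations in \cref{def:vb_k} by their cyclic counterparts, $X\in\textnormal{(S)}\cvd{2k}$ means precisely that $X\in\textnormal{(S)}\cvb{\ell}$ for every $\ell\in(0:2k)$. Hence it suffices to show that for odd $\ell$ one has $\textnormal{(S)}\cvb{\ell}=\textnormal{(S)}\cvb{\ell-1}$, after which the list of conditions over $\ell\in\{0,1,\dots,2k\}$ collapses to the even sublist $\{2j:j\in(0:k)\}$. This equality is a vector-level restatement of part~(i): for $v\ne 0$ the conditions ``$\varic{v}\le\ell$'' and ``$\varic{v}\le\ell-1$'' coincide because $\varic{v}$ is even, and likewise for $\varizc{v}$; applying this to the hypothesis ($v=u$) and, where $Xu\ne 0$, to the conclusion ($v=Xu$) gives the claim. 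The one case needing a separate word is $Xu=0$: here $\varic{0}=-1\le\ell-1$ settles the non-strict notions, while for the strict notions one observes that $\varizc{0}\ge m-1>2k-1\ge\ell$ for every odd $\ell\in(0:2k)$, so $Xu=0$ cannot occur under the hypothesis defining $\textnormal{(S)}\cvb{\ell}$ and the case is vacuous.

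Part~(iii) is then an immediate corollary of part~(i): the defining implication of $\textnormal{(S)}\cvb{2k}$, namely ``$\varic{u}\le 2k\Rightarrow\varic{Xu}\le 2k$'' (resp.\ ``$\Rightarrow\varizc{Xu}\le 2k$'') for all $u\ne 0$, becomes, after the substitutions ``$\varic{\cdot}\le 2k\Leftrightarrow\vari{\cdot}\le 2k$'' and ``$\varizc{\cdot}\le 2k\Leftrightarrow\variz{\cdot}\le 2k$'' furnished by part~(i), verbatim the defining implication of $\textnormal{(S)}\vb{2k}$. I expect the main (and essentially only) friction throughout to be the careful treatment of zero vectors and of the conventions for $\variz{0}$ and $\varizc{0}$ in the strict notions; the remainder is a one-line parity computation.
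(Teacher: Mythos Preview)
Your approach matches the paper's: the paper does not give a separate proof but simply notes that the lemma is a direct consequence of \cref{eq:even_var} (and its strict analogue), which is exactly the parity reduction you carry out. Parts~(i) and~(iii) are clean.

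In part~(ii) there is a small slip in the $Xu=0$ discussion. The quantity $\varizc{0}$ there refers to $Xu\in\mathds{R}^n$, so the relevant dimension is $n$, not $m$; moreover the lemma as stated does not impose $n\ge m$ or $2k\le m-1$, so the chain $\varizc{0}\ge m-1>2k-1\ge\ell$ is not guaranteed by the hypotheses. Fortunately you are proving more than needed: to collapse the list $\{0,1,\dots,2k\}$ to $\{0,2,\dots,2k\}$ you only need the inclusion $\textnormal{(S)}\cvb{2i}\subset\textnormal{(S)}\cvb{2i+1}$, and this direction is immediate without any case split on $Xu$: if $u\neq0$ and $\varic{u}\le 2i+1$ then by evenness $\varic{u}\le 2i$, hence $\varizc{Xu}\le 2i\le 2i+1$ (and likewise in the non-strict case). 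The reverse inclusion, where your $Xu=0$ argument lives, is not required for the lemma.
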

\subsubsection{Periodic Monotonicity Preservation}
We further consider the shift invariant relaxation of $\textnormal{(S)}\vb{2}$ of preserving \emph{periodic monotonicity}, where the latter notion is defined as follows: \begin{defn}[Periodic Monotonicity]
	\label{def:pm} 
	$u \in \ell_{\infty}(T)$ is called (strictly) $T$-periodically monotone ($u \in \textnormal{(S)}\pmu{T}$) if $\varic{u-\gamma \mathbf{1}} \leq 2$ ($\varizc{u-\gamma \mathbf{1}} \leq 2$) for all $\gamma \in \mathds{R}$.
\end{defn}
In other words, $u \in \pmu{T}$ cannot cross any horizontal line more than twice in a period or equivalently, there exist $t_1 \le t_2 \le t_1+T-1$ such that 
\begin{equation}
	u(t_1) \leq u(t_1+1) \leq \dots \leq u(t_2) \geq u(t_2+1) \geq \dots \geq u(t_1+T-1), \label{eq:unimod_perido}
\end{equation}
i.e., $u$ has a so-called \emph{unimodal (single-peaked)} period (with peak at $u(t_2)$). If $u \in \spmu{T}$, it is additionally required that there is at most one $t \in (t_1:t_1+T-1)$ with $\Delta u(t) = 0$, which then has to fulfill $\sign(\Delta u(t-1)) = -\sign(\Delta u(t+1)) \neq 0$, i.e., \cref{eq:unimod_perido} holds with strict inequalities with the only possible exception $u(t_2) = u(t_2+1)$.
\begin{figure}
	\centering
	\begin{tikzpicture}
		\begin{axis}[height=3.5 cm,
			width=12 cm,
			xmax=10,xmin = -10,
			xtick=\empty, ytick = {0}, yticklabels={$\gamma$},
			axis line style = { draw = none }, ymin = -2, ymax = 4,
			]
			\addplot+[ycomb,black,thick,samples at={-10,-9,...,10}]{cos(deg(2*pi/8*x))};
			\addplot[color = red,thick] coordinates{(-4,-2) (-4,2)};
			\addplot[color = red,thick] coordinates{(4,-2) (4,2)};
			\addplot[color = red,thick,dashed] coordinates{(0,-2) (0,2)};
			\addplot[color = black] coordinates{(-10,0) (10,0)};		
			\node[above] at (axis cs:-4,2) {$u(t_1)$};		
			\node[above] at (axis cs:4,2) {$u(t_1+T)$};		
			\node[above] at (axis cs:0,2) {$u(t_2)$};		
		\end{axis}
	\end{tikzpicture}
	\caption{Example of a periodically monotone (single-peaked) $u \in \ell_\infty(T)$: $\varic{u - \gamma \mathbf{1}} \leq 2$ for all $\gamma \in \mathds{R}$, or equivalently, there exist $t_1 \le t_2 \le t_1+T-1$  such that \cref{eq:unimod_perido} is fulfilled. In other words, over the period $(t_1:t_1+T-1)$, every local maximum $u(t)$ is a global maximum (single peak), or simply $\varic{\Delta u} \leq 2$. In particular, $u$ is also strictly periodically monotone, because $\varizc{\Delta u} \leq 2$. 
		\label{fig:single-peaked}}
\end{figure}
Thus, in our derivations, we can use the following equivalent characterization:
\begin{lem} \label{lem:pmu_delta}
	Let $u \in \ell_{\infty}(T)$, then $u \in \textnormal{(S)}\pmu{T}$ if and only if  $\varic{\Delta u} \leq 2$ ($\varizc{\Delta u} \leq 2$). 
\end{lem}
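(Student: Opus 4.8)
The plan is to read off the cyclic sign pattern of $\Delta u$ from the shape of $u$ and to argue directly from \cref{def:pm}, so that the non-strict and the strict case are treated in parallel. Throughout I use $\sum_{t=0}^{T-1}\Delta u(t)=u(T)-u(0)=0$; since $u\in\ell_\infty(T)$ with $T\ge2$ is non-constant, $\Delta u(0:T-1)$ is a nonzero vector summing to zero, so it has entries of both signs and $\varic{\Delta u}\ge2$.

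For the non-strict equivalence it suffices to prove: (i) $\varic{u-\gamma\mathbf{1}}\le\varic{\Delta u}$ for every $\gamma\in\mathds{R}$; and (ii) if $\varic{\Delta u}\ge4$ then $\varic{u-\gamma\mathbf{1}}\ge4$ for some $\gamma$. Indeed, (i) and (ii) together show that $\varic{\Delta u}\le2$ is equivalent to $\varic{u-\gamma\mathbf{1}}\le2$ for all $\gamma$, i.e.\ to $u\in\pmu{T}$. For (i): by definition $\varic{\Delta u}=2r$, where $r$ is the number of maximal runs of positive entries among the nonzero entries of $\Delta u$ read cyclically, and $r$ also equals the number of cyclic local maxima of $u$ (maximal arcs on which $u$ is constant and which are flanked on both sides by strictly smaller values), since each positive run of $\Delta u$ is precisely one maximal ascending stretch of $u$; now for any $\gamma$, each maximal run of positions with $u>\gamma$ taken cyclically among the positions where $u\neq\gamma$ contains such a local maximum — the constancy arc on which $u$ attains its maximum over that run — and different runs contain different local maxima, so the number of these runs is at most $r$, whence $\varic{u-\gamma\mathbf{1}}\le2r$. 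For (ii): $\varic{\Delta u}\ge4$ yields two cyclically separated local maxima of $u$, of values $M_1,M_2$, and, on each of the two arcs joining them, a local minimum, of values $m_1,m_2$; strict decrease and increase of $u$ between consecutive extrema force $\max\{m_1,m_2\}<\min\{M_1,M_2\}$, and for $\gamma$ in this open interval $u-\gamma\mathbf{1}$ is positive at the two maxima and negative at the two minima, so its restriction to these four cyclically ordered points already has cyclic variation $4$.

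For the strict equivalence I use that $\varizc{w}$ equals the maximum of $\varic{w^{\sigma}}$ over all replacements $\sigma$ of the zero entries of $w$ by $\pm1$ (equivalently, $\varizc{w}$ is $\variz{w}$ rounded up to an even number, consistently with \cref{eq:even_var}). The forward implication reduces to (i): fix $\gamma$ and an admissible perturbation $\tau$ of the entries of $u$ equal to $\gamma$, and let $u^{\tau}$ be the perturbed sequence; then $(u-\gamma\mathbf{1})^{\tau}=u^{\tau}-\gamma\mathbf{1}$, so by (i) applied to $u^{\tau}$ we get $\varic{(u-\gamma\mathbf{1})^{\tau}}\le\varic{\Delta u^{\tau}}$, and since $\Delta u^{\tau}$ is obtained from $\Delta u$ by filling some of its zeros and perturbing some nonzeros within their sign, $\varic{\Delta u^{\tau}}\le\varizc{\Delta u}$. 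Hence $\varizc{\Delta u}\le2$ implies $\varizc{u-\gamma\mathbf{1}}\le2$ for all $\gamma$, that is $u\in\spmu{T}$. For the converse, assume $\varizc{\Delta u}\ge4$. If $\varic{\Delta u}\ge4$, apply (ii). Otherwise $\varic{\Delta u}=2$, so $u$ has exactly one ascending and one descending arc per period, and the failure of $\varizc{\Delta u}\le2$ means $\Delta u$ has a zero strictly inside the ascending or descending run, or a block of at least two zeros at the peak or at the trough. Letting $\gamma$ be the value of the corresponding plateau of $u$, the sequence $u-\gamma\mathbf{1}$ then has a block of at least two consecutive zero entries straddling a sign change; replacing that block by $+,-,\dots$ and all other entries equal to $\gamma$ so as to create no new run gives a cyclic sign pattern with at least two positive runs, so $\varizc{u-\gamma\mathbf{1}}\ge4$ and $u\notin\spmu{T}$.

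I expect the strict converse to carry the bulk of the work: enumerating the few plateau configurations of $\Delta u$ and checking that at the chosen level $\gamma$ the remaining positions with $u=\gamma$ can indeed be filled without destroying the extra positive run. The non-strict equivalence, and the reduction of the strict forward implication to (i), are by comparison routine once the $\varic/\varizc$ bookkeeping is in place.
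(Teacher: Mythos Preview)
The paper does not give a formal proof of this lemma; it is stated as an immediate reformulation of the unimodality description \cref{eq:unimod_perido} that precedes it. Your plan is therefore not competing with a written argument, and the route you outline is essentially correct. Two places deserve tightening.

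In (ii), the inequality $\max\{m_1,m_2\}<\min\{M_1,M_2\}$ does \emph{not} follow for an arbitrary pair of local maxima together with an arbitrary local minimum on each arc. For instance, with three local maxima of values $5,1,5$ and three local minima of values $0,0,3$ in alternating cyclic order, choosing $M_1=5$, $M_2=1$ and the minimum $m=3$ on one of the arcs violates it. The fix is to make a specific choice: take $M_1$ to be a local maximum of smallest value and $m_1,m_2$ its two adjacent local minima; then $m_1,m_2<M_1\le M_2$ for any other local maximum $M_2$, and $M_1,m_1,M_2,m_2$ occur in cyclic order, so your level $\gamma$ exists.

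In the strict converse, your description ``a block of at least two consecutive zero entries straddling a sign change'' and the fill ``$+,-,\dots$'' cover the case of a plateau strictly inside an ascending or descending arc, but not the peak/trough case. If $\gamma$ is the peak value, $u-\gamma\mathbf{1}$ is nonpositive, so the zero block of length $\ge 3$ is flanked by negatives on both sides rather than by opposite signs; filling it as $+,-,-,\dots$ then gives only two sign changes. You need an alternating fill of at least three of those zeros, e.g.\ $+,-,+$, to produce the pattern $-,+,-,+,-$ and hence $\varizc{u-\gamma\mathbf{1}}\ge 4$. With these two adjustments the argument goes through.
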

An illustration of these notions and \cref{lem:pmu_delta} is given in 
\cref{fig:single-peaked}. We will also require the following simple composition result, which is proven in \Cref{proof:lem:pmu_mult}:
\begin{lem}\label{lem:pmu_mult}
		Let $u \in \textnormal{(S)}\pmu{T}$ and $\tilde{u}(t) := {u}(t){u}(t+1)$. Then, $\tilde{u} \in  \textnormal{(S)}\pmu{T}$. 
\end{lem}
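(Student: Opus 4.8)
The plan is to reduce to the differenced characterisation of \Cref{lem:pmu_delta}, so that it suffices to show $\varic{\Delta\tilde u}\le 2$ in the non-strict case and $\varizc{\Delta\tilde u}\le 2$ in the strict case. The engine is the elementary factorisation
\begin{equation*}
	\Delta\tilde u(t)=u(t+1)u(t+2)-u(t)u(t+1)=u(t+1)\bigl(\Delta u(t)+\Delta u(t+1)\bigr),\qquad t\in\mathds{Z},
\end{equation*}
which exhibits $\Delta\tilde u$ as the entrywise product of the shift $t\mapsto u(t+1)$ with the cyclic two-tap moving sum $v(t):=\Delta u(t)+\Delta u(t+1)$ of $\Delta u$.

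The first building block I would establish is that this moving sum does not increase cyclic variation at level two: if $w\in\ell_\infty(T)$ has $\varic w\le 2$, then $w'(t):=w(t)+w(t+1)$ has $\varic{w'}\le 2$, and the same with $\varizc{}$ in place of $\varic{}$. This follows from the geometric content of ``cyclic variation at most two'': for $w\neq 0$ there is a cyclic arc $A\subset(0{:}T-1)$ with $w\ge 0$ on $A$ and $w\le 0$ on its complement; hence $w'(t)\ge 0$ whenever $\{t,t+1\}\subset A$ and $w'(t)\le 0$ whenever $\{t,t+1\}\subset A^{c}$, so the sign of $w'$ is undetermined only at the at most two indices where $\{t,t+1\}$ crosses the boundary of $A$, and each such index contributes at most one cyclic sign change, giving $\varic{w'}\le 2$. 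Applying this with $w=\Delta u$, which satisfies $\varic{\Delta u}\le 2$ (resp.\ $\varizc{\Delta u}\le 2$) by \Cref{lem:pmu_delta} because $u\in\pmu T$ (resp.\ $u\in\spmu T$), yields $\varic v\le 2$ (resp.\ $\varizc v\le 2$).

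It then remains to control the entrywise multiplication by $u(\cdot+1)$ in $\Delta\tilde u(t)=u(t+1)v(t)$. Since $u$ is periodically monotone, its sign pattern over a period is itself of cyclic variation at most two, and the set $\{t:u(t)=0\}$ is a single cyclic arc. When $u$ does not change sign, multiplying $v$ by $u(\cdot+1)$ only reverses all signs and/or replaces some entries by $0$ on a cyclic arc; neither operation can increase $\varic{}$ (zeros are ignored) nor $\varizc{}$ (the injected zeros are consecutive, so no alternating sign can be produced), whence $\varic{\Delta\tilde u}\le\varic v\le 2$ and its strict analogue follow. The remaining, and genuinely hard, case is when $u$ does change sign: there one must analyse how the at most two sign changes of $t\mapsto u(t+1)$ combine with the at most two sign changes of $v$, using that each zero-crossing of $u$ lies on a strictly monotone stretch of $u$ (where $v$ keeps a constant sign) and is therefore tightly located relative to the peak $t_2$ and the trough $t_1$, to conclude that the product $\Delta\tilde u$ still has at most two cyclic sign changes; for the strict claim the zeros of $u$ and of $v$ must in addition be propagated simultaneously through the factorisation. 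This case-analysis is where I expect the real work to lie — the reduction, the moving-sum lemma, and the no-sign-change case above being the routine part.
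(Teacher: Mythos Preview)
Your case analysis cannot be completed: the lemma, as stated, is false when $u$ changes sign. Take $T=7$ and $u(0{:}6)=(-2,-1,3,2,1,0,-1)$; then $\Delta u=(1,4,-1,-1,-1,-1,-1)$ has no zeros and $\varic{\Delta u}=2$, so $u\in\spmu{7}$. But $\tilde u(0{:}6)=(2,-3,6,2,0,0,2)$ gives $\Delta\tilde u=(-5,9,-4,-2,0,2,0)$, whose nonzero entries have cyclic sign pattern $(-,+,-,-,+)$ and hence $\varic{\Delta\tilde u}=4$, so $\tilde u\notin\pmu{7}$. The ``genuinely hard'' sign-changing case you deferred is therefore not merely hard but impossible; your reduction via \cref{lem:pmu_delta} and your moving-sum step are correct, but the product step cannot be salvaged in general.

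The paper's own argument is much shorter and carries the same implicit restriction. After shifting so that $u(0)<\dots<u(t^{\ast})\ge u(t^{\ast}+1)>\dots>u(T-1)>u(0)$, it simply reads off $u(i)u(i+1)<u(i+1)u(i+2)$ on the ascent and the reverse on the descent, concluding that $\tilde u$ is again unimodal. Each such inequality silently uses $u(i+1)>0$, so the paper's proof is in fact only valid for $u$ of constant strict sign. This is all that is ever needed in the paper: the lemma is invoked only for the positive sequence $t\mapsto g(t)/g(t-1)$ with $g$ of constant strict sign. Under that extra hypothesis your factorisation $\Delta\tilde u(t)=u(t+1)\bigl(\Delta u(t)+\Delta u(t+1)\bigr)$ also finishes immediately, since the strictly positive factor $u(t+1)$ does not alter cyclic sign patterns; but the paper's direct chain of inequalities is the quicker route.
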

Finally, let us define \emph{periodic monotonicity preservation} for operators. %
\begin{defn}[Periodic Monotonicity Preservation]
	\label{def:pmp}
	$X: \ell_\infty(T) \to \ell_\infty(T)$, $T \geq 4$, is said to be \emph{(strictly) $T$-periodic monotonicity preserving} ($\textnormal{(S)}\pmp{T}$) if $Xu \in \textnormal{(S)}\pmu{T}$ for all $u \in \pmu{T}$ with $\Delta u \not \equiv 0$. For brevity, we write $X \in \textnormal{(S)}\pmp{T}$. 
\end{defn}
Note that the cases of $T \in \{2,3\}$ are not of interest, since the (strict) cyclic variation of vectors with dimensions smaller or equal to three cannot exceed two.

\subsection{Total Positivity}
Investigations of linear variation bounding/diminishing mappings have a long history in areas such as interpolation theory, complex analysis, statistics, mechanics, etc. (see, e.g., the monographs \cite{karlin1968total,fallat2017total}). Within the systems and control community, however, such investigations have just begun (see, e.g.,  \cite{grussler2020variation,grussler2020balanced,margaliot2018revisiting,roth2024system,bar2023compound,ofir2022sufficient,marmary2025tractable,alseidi2021discrete}). Central to the characterization of such mappings is the mathematical framework of \emph{total positivity} \cite{karlin1968total}. Introduction of this framework requires the following matrix notions:
\begin{defn}[$k$-Sign Consistency \& $k$-Positivity]\label{def:k_sc_matrix}
	Let $X \in \Rmn$ and $k \leq \min\{m,n\}$. $X$ is called 
\begin{defnenum}
		\item \emph{(strictly) $k$-sign consistent} ($\textnormal{(S)}\sgc{k}$) if
	$\compound{X}{k} (>)\ge 0$ or $\compound{X}{k} \le (<)0$. For brevity, we write $X \in \textnormal{(S)}\sgc{k}$. 
	\item  \emph{(strictly) $k$-positive} ($\textnormal{(S)}\tp{k}$) if
	$\compound{X}{j} (>) \ge 0$ for all $j \in (1:k)$. For brevity, we write $X \in \textnormal{(S)}\tp{k}$ and in case that $k = \min\{m,n\}$ we say that $X$ is \emph{(strictly) totally positive}.
\end{defnenum}
\end{defn}
Note that $X \in \Rmn$ is $0$-variation bounding/diminishing if and only if $X = \compound{X}{1} \in \sgc{0}$, i.e., all entries share the same sign. This characterization also generalizes as follows \cite{roth2024system,karlin1968total}:
\begin{prop}\label{prop:sc_k_mat_vb_m}
	For $X \in \Rmn$, $n \geq m$, the following hold:
	\begin{propenum}
		\item if $k \leq \min\{m,n\}-1$, then $X \in \svb{k-1}$ if and only if $X \in \ssgc{k}$.
		\item if $\rk(X) = m < n$, then $X \in \vb{m-1}$ if and only if $X \in \sgc{m}$. \label{item:SC_full_rank}
		\item if $r = \rk(X) < m$, then $\vari{Xu} \leq \rk(X)-1$ for all $u \in \mathds{R}^m$ if and only if each column of $\compound{X}{r}$ is $\sgc{0}$. \label{item:SC_low_rank}
		\item if $k < \rk(X)$ and any $k$ columns of $X$ are linearly independent, then $X \in \vb{k-1}$ if and only if $X \in \sgc{k}$.  \label{item:SC_lin_ind}
	\end{propenum}
\end{prop}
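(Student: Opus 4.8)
The plan is to reconstruct the statement from one mechanism and then peel off the four cases; it collects classical total-positivity facts (cf.\ \cite[Ch.~5]{karlin1968total}) and their systems-theoretic form \cite{roth2024system}. The mechanism combines the Cauchy--Binet formula \Cref{eq:Cauchy_Binet} with a \emph{sign-diagonal conjugation}: if a vector $v$ fails $\variz{v}\le d-1$, then on a suitable set of $d+1$ of its coordinates it is strictly alternating after resolving zeros, so multiplying those coordinates by the matching alternating $\pm1$ diagonal $D$ makes $Dv$ nonnegative and nonzero. Writing $v=Au$ for a $(d+1)\times d$ array $A$, \Cref{eq:Cauchy_Binet} gives that the maximal minors $M_i:=\det(A\text{ with row }i\text{ deleted})$ of $DA$ satisfy $\sign(M_i)=(-1)^i\epsilon$ for a fixed $\epsilon\in\{\pm1\}$; hence the cofactor vector $\nu$, $\nu_i:=(-1)^iM_i$, has all entries of sign $\epsilon$, and since $A^{\transp}\nu=0$ one gets $\im{A}=\nu^{\perp}$ — a hyperplane meeting the nonnegative orthant only at $0$, forcing $DAu=0$, hence $u=0$ when $A$ has full column rank. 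Packaged as a \emph{subspace lemma}: a $d$-dimensional $V\subseteq\mathds{R}^n$ has every nonzero element of strict variation $\le d-1$ iff, for some (equivalently every) matrix $B$ whose columns form a basis of $V$, the single column $\compound{B}{d}$ is entrywise of one strict sign (and the analogue with $\vari{\cdot}$, proved by combining this with \Cref{lem:lim_var}); item (iii) is this lemma applied to $V=\im{X}$, where $\rk(X)=r<m$ makes some column of $\compound{X}{r}$ nonzero.

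For the ``if'' direction of (i) — and likewise (ii), (iv) — assume $\compound{X}{k}>0$ (the case $\compound{X}{k}<0$ being symmetric) and suppose $u\ne0$ with $\vari{u}\le k-1$ had $\variz{Xu}\ge k$. If $u$ has at most $k$ nonzero entries, its support lies in some $k$-column set $\mathcal{J}$, so $Xu\in\im{X_{(:,\mathcal{J})}}$; since the column of $\compound{X}{k}$ indexed by $\mathcal{J}$ is strictly positive, $X_{(:,\mathcal{J})}$ has full column rank $k$ and $\im{X_{(:,\mathcal{J})}}$ is a $k$-dimensional subspace on which the subspace lemma bounds strict variation by $k-1$ — while $Xu=0$ is impossible by injectivity of $X_{(:,\mathcal{J})}$. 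The general case is brought to this one by grouping $u$ into its at most $k$ maximal runs of constant sign and replacing the positive combination of columns of $X$ contributed by each run with a single column. This replacement is the step I expect to be the main obstacle: it is the nontrivial total-positivity fact that, under order-$k$ sign consistency, a nonnegative combination of columns does not increase the variation of the output — essentially the foundational variation-diminishing property of order-$k$ totally positive matrices — for which plain linear algebra does not suffice.

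For the ``only if'' direction of (i), (ii), (iv), argue by contradiction on a failure of $\compound{X}{k}$. If some $k$ columns were linearly dependent, a nonzero $u$ supported on them has $\vari{u}\le k-1$ yet $Xu=0$, so $\variz{Xu}=n-1>k-1$ (using $k\le m-1\le n-1$), contradicting $\svb{k-1}$; hence every $k$ columns of $X$ are independent, and the subspace lemma applied to each $k$-column image shows every column of $\compound{X}{k}$ is strictly one-signed, in particular nonvanishing. That these columns all carry the same sign then follows by a chaining argument over column (or row) index sets differing in a single element, at each step relating the two order-$k$ minors in play through a Cramer/cofactor identity and a test input of variation at most $k-1$. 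Finally, the non-strictness of (ii) and the difference between the non-strict and strict notions in the presence of zeros are handled by perturbing to the strict case and passing to the limit via \Cref{lem:lim_var}, and in (iv) the hypothesis that any $k$ columns are independent is exactly what keeps each column of $\compound{X}{k}$ nonzero, so that $\sgc{k}$, rather than the vacuous column-wise $\sgc{0}$ of (iii), is the right condition.
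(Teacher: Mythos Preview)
The paper does not prove \Cref{prop:sc_k_mat_vb_m}; it is stated with citations to \cite{roth2024system,karlin1968total} and used as a black box throughout. Your reconstruction follows the classical route found in those references, and the subspace lemma you isolate is indeed the heart of the matter.

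One correction to your self-assessment: the step you flag as the ``main obstacle'' does not require anything beyond the tools you have already assembled. Suppose $\compound{X}{k}>0$ and $u\neq 0$ has $\vari{u}\le k-1$. If $u$ has fewer than $k$ nonzero entries you already handle it by embedding the support in an ordered $k$-subset $\mathcal{J}$ and applying the subspace lemma to $X_{(:,\mathcal{J})}$. If $u$ has at least $k$ nonzero entries, subdivide its (at most $k$) sign-runs at nonzero positions into exactly $k$ ordered consecutive sub-blocks $B_1,\dots,B_k$, each containing a nonzero entry of $u$, and set the $r$-th column of $\tilde{A}\in\mathds{R}^{n\times k}$ to $\sum_{j\in B_r}|u_j|\,X_{(:,j)}$. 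By multilinearity of the determinant, every $k$-minor of $\tilde{A}$ expands as a nonnegative combination of $k$-minors of $X$ with strictly increasing column indices, and at least one summand has all coefficients nonzero; hence $\compound{\tilde{A}}{k}>0$. Since $Xu=\tilde{A}\sigma$ for some $\sigma\in\{\pm 1\}^k$, your own subspace lemma gives $\variz{Xu}\le k-1$. This is plain linear algebra --- no appeal to a deeper variation-diminishing theorem is needed, so your worry that ``plain linear algebra does not suffice'' is unfounded. The remaining pieces of your outline (the chaining argument for the ``only if'' direction, and the limiting argument via \Cref{lem:lim_var} for the non-strict items) are standard and correctly identified.
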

By \cref{item:cvb_vb}, analogous cyclic characterizations then read:
\begin{cor}\label{cor:cvb_k}
	Let $X \in \Rmn$, $n \geq m$. Then, the following hold:
	\begin{corenum}
		\item if $2k+2 \leq \min\{m,n\}$, then $X \in \scvb{2k}$ if and only if $X \in \ssgc{2k+1}$. \label{item:SCVB_SSC}
		\item if $\rk(X) = m = 2k+1 < n$, then $X \in \cvb{2k}$ if and only if $X \in \sgc{2k+1}$. \label{item:CVB_SC}
		\item if $2k+1 < \rk(X)$ and any $2k+1$ columns of $X$ are linearly independent, then $X \in \cvb{2k}$ if and only if $X \in \sgc{2k+1}$. \label{item:CVB_lin_ind}
	\end{corenum}
\end{cor}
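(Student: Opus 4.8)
The plan is to obtain all three equivalences as direct specializations of the aperiodic results in \Cref{prop:sc_k_mat_vb_m}, transported to the cyclic setting through \cref{item:cvb_vb}. The key observation is that \cref{item:cvb_vb} states $X \in \textnormal{(S)}\cvb{2k}$ if and only if $X \in \textnormal{(S)}\vb{2k}$, so in each case it suffices to characterize when $X \in \textnormal{(S)}\vb{2k}$ and then read off the cyclic statement verbatim. (The restriction to even cyclic indices is exactly what \cref{eq:even_var} licenses, so $\cvb{2k}$ is the only relevant cyclic bounding notion, and no odd case is lost.)

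For part~(i), I would invoke \Cref{prop:sc_k_mat_vb_m}(i) with its index parameter set equal to $2k+1$: its hypothesis ``$k \le \min\{m,n\}-1$'' then becomes $2k+1 \le \min\{m,n\}-1$, i.e.\ precisely $2k+2 \le \min\{m,n\}$, while its conclusion ``$X \in \svb{k-1}$ iff $X \in \ssgc{k}$'' becomes ``$X \in \svb{2k}$ iff $X \in \ssgc{2k+1}$''; combining with \cref{item:cvb_vb} gives the claim. Part~(ii) follows the same way from \cref{item:SC_full_rank}, specialized to $m = 2k+1$: the assumption $\rk(X)=m<n$ is in force, and the conclusion ``$X \in \vb{m-1}$ iff $X \in \sgc{m}$'' reads ``$X \in \vb{2k}$ iff $X \in \sgc{2k+1}$'', which together with \cref{item:cvb_vb} closes it. Part~(iii) is likewise a specialization of \cref{item:SC_lin_ind} with index $2k+1$: its hypotheses ``$k < \rk(X)$ and any $k$ columns of $X$ linearly independent'' become ``$2k+1 < \rk(X)$ and any $2k+1$ columns of $X$ linearly independent'', and its conclusion again yields ``$X \in \vb{2k}$ iff $X \in \sgc{2k+1}$''.

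Since every step is a substitution of indices into results already established in the excerpt, there is essentially no analytical obstacle here; the only thing requiring care is the bookkeeping. In particular I would double-check that, under the standing assumption $n \ge m$, the threshold $2k+2 \le \min\{m,n\}$ in~(i) is the correct rewriting of ``$\le \min\{m,n\}-1$'' for the shifted index, that the strict versus non-strict hypotheses line up (strict $\svb$/$\ssgc$ in~(i) versus non-strict $\vb$/$\sgc$ in~(ii) and~(iii)), and that the dimension constraints inherited from \Cref{prop:sc_k_mat_vb_m} are stated in exactly the form appearing in \Cref{cor:cvb_k}.
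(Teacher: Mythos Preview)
Your proposal is correct and matches the paper's approach exactly: the paper derives \Cref{cor:cvb_k} in a single line by invoking \cref{item:cvb_vb} to pass from $\textnormal{(S)}\cvb{2k}$ to $\textnormal{(S)}\vb{2k}$ and then reading off the three parts of \Cref{prop:sc_k_mat_vb_m} with the index specialized to $2k+1$. Your index bookkeeping (in particular the rewriting $2k+1 \le \min\{m,n\}-1 \Leftrightarrow 2k+2 \le \min\{m,n\}$ and the strict/non-strict alignment) is exactly what is needed.
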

Verifying that $X \in \textnormal{(S)}\sgc{m}$ requires a tractable way of checking whether all elements in $\compound{X}{m}$ share the same (strict) sign. Instead of computing all entries in $\compound{X}{m}$, certain strictness assumptions make it sufficient to only consider consecutive minors \cite[Theorem~3.3.1]{karlin1968total}:
\begin{prop} \label{prop:karlin_scm}
	Let $X \in \mathds{R}^{n \times m}$, $n\geq m$, be such that 
	\begin{propenum}
		\item all consecutive $j$-minors of $X_{(:,(m:m-j+1))}$ share the same strict sign for all $j \in (1:m-1)$.
		\item all consecutive $m$-minors of $X$ share the same (strict) sign.
	\end{propenum}
	Then, $X$ is $\textnormal{(S)}\sgc{m}$. 
\end{prop}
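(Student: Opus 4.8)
The goal is to show that $\compound{X}{m}$ carries a single (strict) sign, i.e.\ that \emph{every} $m$-minor of $X$ --- all of which use the full column set $(1:m)$ --- has the common sign $\epsilon\in\{+,-\}$ of the consecutive $m$-minors supplied by (ii). This is Karlin's classical determinantal criterion \cite[Theorem~3.3.1]{karlin1968total}; the plan is to prove it by a double induction, the outer one on $m$ and the inner one on the row set. For the outer step, the key observation is that hypothesis (i) turns the \emph{trailing} column blocks of $X$ into smaller instances of the very same statement: for $k\le m-1$ the $n\times k$ matrix $X_{(:,(m:m-k+1))}$ inherits (i) for all $j\le k-1$, while its ``consecutive $k$-minor'' requirement is exactly the (strict) $j=k$ case of (i). Hence, by the outer induction hypothesis, each $X_{(:,(m:m-k+1))}$ is strictly $\sgc{k}$; in particular all of its $k$-minors are nonzero and share one sign. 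This is the pool of nonvanishing, sign-controlled minors that the rest of the argument feeds on, and it also settles the base case $m=1$, which is just (ii).

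Fixing $m$, I would show by a secondary induction on the number of rows $n$ that every $m$-minor of $X$ has sign $\epsilon$. There is nothing to prove when $n=m$. When $n>m$, applying the secondary hypothesis to $X_{((1:n-1),:)}$ and to $X_{((2:n),:)}$ --- each of which still satisfies (i) and (ii) --- shows that every $m$-minor whose row set omits the first or the last row has sign $\epsilon$, leaving only a row set $I=\{1<i_2<\dots<i_{m-1}<n\}$ using both extreme rows. For such $I$ I would apply Sylvester's determinantal identity to the $m\times m$ matrix $X_{(I,:)}$ with the $(m-2)\times(m-2)$ pivot on rows $\{i_2,\dots,i_{m-1}\}$ and trailing columns $(3:m)$: the pivot determinant is an $(m-2)$-minor of the trailing block $X_{(:,(3:m))}$ and is therefore nonzero with known sign by the outer induction, so the identity
\[
  \det X_{(I,:)}\cdot\det X_{(\{i_2,\dots,i_{m-1}\},(3:m))}=\pm\det B
\]
can be solved for $\det X_{(I,:)}$, where $B$ is the $2\times2$ array of bordered minors indexed by the missing rows $\{1,n\}$ and missing columns $\{1,2\}$; two of these, namely $B_{r,2}=\det X_{(\{r,i_2,\dots,i_{m-1}\},(2:m))}$, are $(m-1)$-minors of the trailing block $X_{(:,(2:m))}$ and so are again sign-controlled by the outer induction.

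The delicate point --- and the reason the classical argument is intricate --- is the sign bookkeeping in this last step: Sylvester's identity (like the three-term Grassmann--Pl\"ucker relation one could use instead) presents $\det X_{(I,:)}$ times the nonzero pivot as a \emph{difference} of two products of minors, so sign information alone does not determine the outcome, and moreover the remaining bordered entries $B_{r,1}$ involve the column set $\{1,3,4,\dots,m\}$, which is neither consecutive nor trailing. Forcing the sign therefore requires a more careful arrangement --- a shrewder choice of pivot that makes the two competing products have opposite signs, or an intermediate ``column-extension'' step that first proves the conclusion for minors with consecutive \emph{rows} and arbitrary columns (the transpose-type dual of the present claim) and only afterwards extends to arbitrary rows. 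It is worth checking the scheme in low dimensions: for $m=2$ it collapses, after dividing out the nowhere-zero last column, to transitivity of $\le$ for a monotone sequence of ratios; for $m=3$ it asserts that a planar point set whose second coordinate is monotone (by (i)) and whose consecutive triples are consistently oriented (by (ii)) has all its triples consistently oriented. A structurally cleaner alternative is to recognize the consecutive trailing minors of (i)--(ii) as the pivots of the Neville elimination (bidiagonal factorization) of $X$: the hypotheses say these pivots are all nonzero with prescribed signs, and then the Lindstr\"om--Gessel--Viennot reading of the Cauchy--Binet formula \cref{eq:Cauchy_Binet} for this factorization expresses every minor of $X$, up to one global sign, as a subtraction-free polynomial in the pivots --- which is precisely $\textnormal{(S)}\sgc{m}$.
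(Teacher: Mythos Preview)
The paper does not prove \cref{prop:karlin_scm}; it is quoted verbatim from \cite[Theorem~3.3.1]{karlin1968total} and used as a black box. What the paper \emph{does} contain is precisely your low-dimensional cross-check: \cref{lem:mono} and \cref{lem:convex}, together with the diagonal factorization \cref{eq:fact}, recover the cases $m=2$ and $m=3$ from the locality of monotonicity and convexity, respectively. So on that front your sketch and the paper's exposition agree.

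Where your proposal falls short is that it is not a proof but an outline with a confessed hole. Your Sylvester step produces $\det X_{(I,:)}$ as a \emph{difference} of two sign-controlled products, and---as you correctly note---sign information alone cannot determine the sign of a difference; worse, the bordered entries $B_{r,1}$ use the non-trailing column set $\{1,3,\dots,m\}$, which neither hypothesis (i) nor the outer induction covers. You then list two possible rescues (a cleverer pivot, or a column-extension intermediate step) without executing either, and close with a third alternative via Neville elimination and Lindstr\"om--Gessel--Viennot that is asserted rather than carried out. For that last route one would still need to argue that the pivots of the Neville scheme are exactly the consecutive trailing minors of (i), that they are all nonzero so the elimination terminates, and that the resulting bidiagonal factors encode the possibly non-strict sign of the full $m$-minors from (ii); none of this is automatic, and the mixed strict/non-strict hypothesis is precisely where such arguments tend to break. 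As written, the proposal identifies the correct ingredients but does not close the argument.
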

\cref{prop:karlin_scm} can also be used to prove the following sufficient condition for $k$-positivity \cite[Proposition~8]{grussler2020variation}: 
\begin{cor} \label{cor:k_pos_consec_minors}
	Let $X \in \Rmn$, $k \leq \min \{n,m\}$, be such that
	\begin{corenum}
	\item all consecutive $j$-minors of $X$ are strictly positive, $j \in (1:k-1)$,
	\item all consecutive $k$-minors of $X$ are
	nonnegative (strictly positive).
	\end{corenum}
	Then, $X \in \textnormal{(S)}\tp{k}$, which in the strict case is also a necessary condition. 
\end{cor}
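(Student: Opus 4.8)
The plan is to derive the statement directly from \Cref{prop:karlin_scm} by applying it repeatedly to the leading submatrices of $X$, and then to handle the strict necessity claim separately. First I would observe that to conclude $X \in \textnormal{(S)}\tp{k}$ we must show $\compound{X}{j} \ge 0$ (resp. $> 0$) for every $j \in (1:k)$, i.e. that $X$ is $\textnormal{(S)}\sgc{j}$ with the nonnegative sign for each such $j$. The idea is that for a fixed $j \le k$, the hypotheses of \Cref{cor:k_pos_consec_minors} already guarantee what \Cref{prop:karlin_scm} needs: the consecutive $i$-minors of $X$ are strictly positive for $i \in (1:j-1)$ (since $j-1 \le k-1$), and the consecutive $j$-minors of $X$ are nonnegative (strictly positive) — for $j < k$ they are even strictly positive, and for $j = k$ they satisfy exactly the hypothesis (ii). The only subtlety is that \Cref{prop:karlin_scm} as stated requires strict positivity of the consecutive minors of the \emph{trailing} column blocks $X_{(:,(m:m-i+1))}$, whereas our hypothesis gives strict positivity of \emph{all} consecutive $i$-minors of $X$; but every consecutive minor of a trailing column block of $X$ is in particular a consecutive minor of $X$, so this is immediate, and the sign in question is ``positive'', which is consistent across all of them.

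Concretely, I would fix $j \in (1:k)$ and apply \Cref{prop:karlin_scm} to $X$ with the role of $m$ there played by $j$ — that is, I would note that the hypothesis ``all consecutive $i$-minors of $X_{(:,(m:m-i+1))}$ share the same strict sign for $i \in (1:j-1)$'' follows from hypothesis (i) of the corollary (all consecutive $i$-minors of $X$ are strictly positive for $i \le k-1$, hence in particular for $i \le j-1$), and the hypothesis ``all consecutive $j$-minors of $X$ share the same (strict) sign'' follows from hypothesis (i) when $j < k$ (strictly positive) and from hypothesis (ii) when $j = k$ (nonnegative, or strictly positive in the strict case). \Cref{prop:karlin_scm} then yields $X \in \textnormal{(S)}\sgc{j}$, and since the common sign is ``$\ge 0$'' (or ``$> 0$''), we get $\compound{X}{j} \ge 0$ (resp. $>0$). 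As $j \in (1:k)$ was arbitrary, $X \in \textnormal{(S)}\tp{k}$.

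For the necessity direction in the strict case, I would argue contrapositively: if $X \in \tp{k}$ with all minors up to order $k$ strictly positive — i.e. $X$ is strictly $k$-positive — then in particular every consecutive $j$-minor, being a genuine $j$-minor, is strictly positive for all $j \in (1:k)$, which gives both (i) (for $j \le k-1$) and the strict form of (ii) (for $j = k$) at once. This direction is essentially trivial since consecutive minors are a subset of all minors.

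The main obstacle is really just bookkeeping: making sure that the indices line up when instantiating \Cref{prop:karlin_scm} with $j$ in place of $m$, and being careful that the ``shared sign'' in \Cref{prop:karlin_scm} is pinned down to be the positive one (which it is, because the consecutive minors we feed in are explicitly positive or nonnegative, never negative). There is no analytic difficulty here; the content has been front-loaded into \Cref{prop:karlin_scm}, and this corollary is a clean specialization together with the observation that total positivity is just $\sgc{j}$-with-positive-sign for all $j$ up to $k$.
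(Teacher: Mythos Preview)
Your proposal has a gap in how you invoke \Cref{prop:karlin_scm}. That proposition is stated for $X \in \mathds{R}^{n \times m}$ with $n \geq m$ and concludes $X \in \textnormal{(S)}\sgc{m}$; the parameter $m$ there is the column count of the matrix, not a free index you can set to $j < m$. Applying \Cref{prop:karlin_scm} to $X$ itself therefore yields only information about $m$-minors, not about $j$-minors for $j < m$, so the sentence ``apply \Cref{prop:karlin_scm} to $X$ with the role of $m$ there played by $j$'' is not a valid instantiation.

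The natural repair is to apply \Cref{prop:karlin_scm} to the $n \times j$ submatrix $X_{(:,J)}$ for each choice of $j$ columns $J$, but this does not go through in one pass: the hypotheses of \Cref{prop:karlin_scm} for $X_{(:,J)}$ ask for the signs of consecutive $i$-minors of its trailing column block, and those minors involve consecutive rows of $X$ but the generally \emph{non-consecutive} columns $J$ --- so they are not consecutive minors of $X$, and hypothesis~(i) of the corollary does not supply them. The route the paper alludes to (without spelling it out, citing \cite[Proposition~8]{grussler2020variation}) is a two-step application: first apply \Cref{prop:karlin_scm} to $X_{(I,:)}^{\transp}$ for every block $I$ of $j$ consecutive rows --- here the needed minors \emph{are} consecutive in $X$ --- to deduce that every $i$-minor with consecutive rows and arbitrary columns is (strictly) positive for $i \leq j$; then use this upgraded information to verify the hypotheses of \Cref{prop:karlin_scm} for $X_{(:,J)}$ with arbitrary $J$. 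Your argument collapses these two passes into one, which is where it fails. (The necessity direction in the strict case is fine as you wrote it.)
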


\section{Convex Contours}
\label{sec:convex_contour}
In this section, we present our first main result on the tractable characterization of so-called \emph{sequentially convex contours}. This is a crucial intermediate result in preparation for our certificates of $\Conv{g}$ belonging to the classes $\textnormal{(S)}\pmp{T}$ and $\textnormal{(S)}\cvd{T}$. In particular, our result establishes a natural generalization of \cref{prop:karlin_scm} in case of $m=3$.

\subsection{Monotonicity \& Convexity}
We begin by showing that for $m = 2$ and $m=3$, \cref{prop:karlin_scm} follows from the locality properties of monotonicity and convexity, respectively. Our characterization of convex contours will then follow from these geometric interpretations.
\begin{defn}[Monotonicity]
$f: \mathcal{D} \subset \mathds{R} \to \mathds{R}$ is called (strictly) monotonically decreasing if $f(t_{i_1}) (>) \geq  f(t_{i_2})$ for all $t_{i_1} < t_{i_2}$ from $\mathcal{D}$. $f$ is (strictly) monotonically increasing if $-f$ is (strictly) monotonically decreasing. %
\end{defn}
The following characterization of monotonicity by sign consistency follows from direct computations:
\begin{lem}\label{lem:mono}
  	Let $f: \{t_{1}, t_{2}, \dots, t_{n} \} \to \mathds{R}$ with $t_{i} < t_{i+1}$ for all $i \in (1:n-1)$ and 
  	\begin{equation*}
  		M^f := 	\begin{bmatrix}
  			f(t_{1}) & 1\\
  			f(t_{2}) & 1 \\
  			\vdots & \vdots \\
  		 f(t_n) & 1
  		\end{bmatrix}
  	\end{equation*}
  	Then, the following are equivalent:
\begin{lemenum}
  		\item $f$ is (strictly) monotonically decreasing. 
  		\item $M^f_{[2]} (>) \geq 0$. 
  		\item For all $i \in (1:n-1)$ it holds that \label{item:mono_delta}
  		\begin{equation*}
  		 \det \begin{bmatrix}
  				f(t_i) & 1\\
  				f(t_{i+1}) & 1
  			\end{bmatrix} (>) \geq 0
  		\end{equation*}
\end{lemenum}
\end{lem}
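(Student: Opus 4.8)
The statement to prove is \Cref{lem:mono}, which characterizes monotonicity of a function $f$ on a finite ordered set via the sign-consistency of the $2\times 2$ matrix $M^f$ (equivalently via all consecutive $2$-minors).

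The plan is to prove this by establishing a cycle of implications among the three conditions, exploiting the fact that each $2$-minor of $M^f$ is simply a difference of two function values. First I would compute, for any $i<j$, the minor $\det\begin{bmatrix} f(t_i) & 1\\ f(t_j) & 1\end{bmatrix} = f(t_i) - f(t_j)$. This single observation does almost all the work: condition (ii), that every $2$-minor $M^f_{[2]}$ is $\ge 0$ (resp.\ $>0$), says precisely that $f(t_i) \ge f(t_j)$ (resp.\ $>$) for every pair $i<j$, which is the \emph{definition} of (strictly) monotonically decreasing, i.e.\ (i). So (i) $\Leftrightarrow$ (ii) is immediate once the determinant is computed.

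Next I would handle (ii) $\Leftrightarrow$ (iii). The direction (ii) $\Rightarrow$ (iii) is trivial, since the minors appearing in (iii) are a subset of those appearing in (ii) (namely the consecutive ones, with $j = i+1$). For (iii) $\Rightarrow$ (ii) I would argue by telescoping: if $f(t_i) \ge f(t_{i+1})$ for every $i \in (1:n-1)$, then for any $i < j$ one has $f(t_i) \ge f(t_{i+1}) \ge \dots \ge f(t_j)$ by chaining the consecutive inequalities, hence $f(t_i) - f(t_j) \ge 0$; the strict case is identical, noting that a chain of strict inequalities (the chain is nonempty since $i<j$) yields a strict inequality. Combining the two equivalences closes the cycle and proves the lemma.

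There is essentially no hard part here — this is exactly the ``direct computation'' the paper alludes to before the statement. The only point requiring a moment's care is the strict case in the telescoping step: one must observe that for $i<j$ the chain $t_i < t_{i+1} < \dots < t_j$ contains at least one link, so strictness is genuinely propagated rather than vacuously lost; and conversely that the locality condition (iii) with strict inequalities really does force global strict monotonicity. I would also remark that this lemma is the $m=2$ instance of \Cref{prop:karlin_scm}: condition (iii) is the $j=1$ part (consecutive $1$-minors, i.e.\ the entries of the last column, all equal to $1$, trivially same sign) together with the consecutive $2$-minor condition, so the lemma exhibits \cref{prop:karlin_scm} as a purely local-to-global statement about monotonic interpolation, which is the geometric viewpoint the section is building toward.
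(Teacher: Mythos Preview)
Your proposal is correct and is precisely the ``direct computation'' the paper invokes in lieu of a proof: compute $\det\begin{bmatrix} f(t_i) & 1\\ f(t_j) & 1\end{bmatrix} = f(t_i)-f(t_j)$, read off (i)~$\Leftrightarrow$~(ii) from the definition of monotonicity, and obtain (iii)~$\Rightarrow$~(ii) by chaining consecutive inequalities. There is nothing to add.
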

Alternatively, \cref{lem:mono} can also be verified by noticing that a function is monotone if and only if it crosses any value in $\mathds{R}$ at most once and cannot remain in a value under the additional strictness assumption. As this property is invariant with respect to scaling and translation, this translates to
\begin{equation*}
\forall u \in \mathds{R}^2\setminus \{0\}:	(\variz{u_1 f + u_2 \mathbf{1}})\; \vari{u_1 f + u_2 \mathbf{1}}\leq 1,
\end{equation*} 
which by \cref{item:SC_full_rank,item:SC_low_rank} leads to the same conclusion. Moreover, \cref{prop:karlin_scm} in case of $m=2$ can be recovered from \cref{lem:mono} by noticing that 
if the second column of $X \in \mathds{R}^{n \times 2}$ consists of elements with the same strict sign, then by \cref{eq:Cauchy_Binet}
 \begin{equation}
 	X = \begin{bmatrix}
 		x_{12} &  &\\
 		& \ddots & \\
 		& & x_{n2}
 	\end{bmatrix} \begin{bmatrix}
 	\frac{x_{11}}{x_{12}} & 1\\
 	\vdots & \vdots \\
 	\frac{x_{n1}}{x_{n2}} & 1
 \end{bmatrix} \in \text{(S)SC}_2 \label{eq:fact}
 \end{equation}
 if and only if the second factor is $\text{(S)SC}_2$. Interpreting this second factor as the matrix $M^f$ in \cref{lem:mono}, it follows from \cref{item:mono_delta} that it is enough to verify that only consecutive $2$-minors share the same sign. This, however, is equivalent to checking the consecutive $2$-minors of $X$ itself. Next, we will move on to the case of $m=3$ and its links to convexity. 
 \begin{defn}[Convexity/Concavity]\label{def:cvx}
	Let $\mathcal{D} := \{t_1,\dots,t_n\}$ with with $t_{i} < t_{i+1}$ for all $i \in (1:n-1)$. Then, $f: \mathcal{D} \to \mathds{R}$ is \emph{(strictly) convex} if for all fixed $\tau \in \mathcal{D}$
	\begin{equation}
s(t,\tau) := \frac{f(t) - f(\tau)}{t-\tau} = \frac{f(\tau) - f(t)}{\tau-t} 
	 \label{eq:def_cvx_seq}
	\end{equation}
	is (strictly) monotonically increasing in $t$ on $\mathcal{D} \setminus \{\tau\}$. $f$ is called \emph{(strictly) concave} if $-f$ is (strictly) convex. 
\end{defn}
Our definition of convexity via the so-called \emph{slope function} $s(t,\tau)$ is a well-known equivalence in the theory of convex functions \cite[Proposition~1.1.4]{hiriart2013convex} on $\mathds{R}$. It follows by \cite[Proposition~5.3.1]{hiriart2013convex} that the \emph{piecewise-linear continuous extension} $\bar{f}: [t_1,t_n] \to \mathds{R}$ of $f$ defined by
\begin{equation}
	\bar{f}(t) := f(t_i) + s(t_{i+1},t_i)(t-t_i), \; t_i \leq t \leq t_{i+1} \label{eq:f_ext}
\end{equation}
is convex if and only if its right-derivatives are monotonically increasing -- by the piecewise-linearity, these derivatives correspond to $s(t_{i+1},t_i)$. Expressing the monotonicity of the slope function $s(t,\tau)$ via \cref{lem:mono} gives the following characterizations in terms of sign-consistency, which is proven in \Cref{proof:lem:convex}:

\begin{lem}\label{lem:convex}
	Let $f: \{t_{1}, t_{2}, \dots, t_{n} \} \to \mathds{R}$ with $t_{i} < t_{i+1}$ for all $i \in (1:n-1)$ and 
	\begin{equation*}
		M^f := 	\begin{bmatrix}
			t_{1}  & f(t_{1}) & 1 \\
			t_{2}  & f(t_{2}) & 1 \\
			\vdots & \vdots & \vdots \\
			t_{n}  & f(t_{n}) & 1 \\
		\end{bmatrix}
	\end{equation*}
	Then, the following are equivalent:
	\begin{lemenum}
		\item $f$ is (strictly) convex. \label{item:cvx_f}
		\item $\bar{f}$ in \cref{eq:f_ext} is convex (and no three points of $\{(t_i,f(t_i)): t_i \in \mathcal{D}\}$ are co-linear).  \label{item:bar_f}
		\item $M^f_{[3]} (>) \geq 0$.  \label{item:cvx_comp}
		\item For all $i \in (1:n-2)$ it holds that \label{item:cvx_det}
		\begin{equation*}
			 \det \begin{bmatrix}
				t_{i}  & f(t_{i}) & 1 \\
			t_{i+1}  & f(t_{i+1}) & 1 \\
			t_{i+2}  & f(t_{i+2}) & 1	
			\end{bmatrix} (>) \geq 0.
		\end{equation*}
	\end{lemenum}
\end{lem}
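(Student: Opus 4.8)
The plan is to establish the cycle of equivalences (i) $\Leftrightarrow$ (ii) $\Leftrightarrow$ (iii) $\Leftrightarrow$ (iv) by reducing everything to the already-proven characterization of monotonicity in Lemma \ref{lem:mono}, exactly as in the $m=2$ discussion preceding the statement. The key algebraic observation is that the $3\times 3$ determinant appearing in item (iv) is, up to a positive factor, the difference of two consecutive slopes: a direct cofactor expansion gives
\begin{equation*}
\det \begin{bmatrix} t_i & f(t_i) & 1 \\ t_{i+1} & f(t_{i+1}) & 1 \\ t_{i+2} & f(t_{i+2}) & 1 \end{bmatrix} = (t_{i+2}-t_i)\bigl( s(t_{i+2},t_{i+1}) - s(t_{i+1},t_i) \bigr),
\end{equation*}
and since $t_{i+2}-t_i > 0$, the sign of this minor is the sign of the slope increment. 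More generally, for any fixed $\tau = t_j$ and any $t_a < t_b$ in $\mathcal{D}\setminus\{\tau\}$, the $3\times 3$ minor of $M^f$ on rows $\{j,a,b\}$ equals (up to a positive scalar depending on the ordering of $\tau$ relative to $t_a,t_b$) the slope increment $s(t_b,\tau)-s(t_a,\tau)$; this is the bridge between the $[3]$-compound of $M^f$ and the $[2]$-compound of the "slope matrix" associated to $s(\cdot,\tau)$.

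With this in hand I would argue as follows. \textbf{(i) $\Leftrightarrow$ (iv):} By Definition \ref{def:cvx}, $f$ is (strictly) convex iff for every fixed $\tau\in\mathcal{D}$ the function $t\mapsto s(t,\tau)$ is (strictly) monotonically increasing on $\mathcal{D}\setminus\{\tau\}$. Apply Lemma \ref{lem:mono} (with the roles of increasing/decreasing flipped by a sign) to each such slope function: by \cref{item:mono_delta} monotonicity of $s(\cdot,\tau)$ is equivalent to the sign condition on its \emph{consecutive} $2$-minors, i.e.\ on consecutive slope increments $s(t_{k+1},\tau)-s(t_k,\tau)$. The subtle point—and the one genuine obstacle—is that a priori this must be checked for \emph{every} anchor $\tau$, whereas (iv) only involves the single distinguished anchor choice $\tau = t_{i+1}$ for each consecutive triple. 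Here I would invoke the piecewise-linear extension $\bar f$ from \cref{eq:f_ext}: by \cite[Proposition~5.3.1]{hiriart2013convex}, $\bar f$ is convex iff its right-derivatives $s(t_{i+1},t_i)$ are nondecreasing in $i$, which is precisely condition (iv); and convexity of $\bar f$ on the interval $[t_1,t_n]$ immediately forces monotonicity of $s(\cdot,\tau)$ for \emph{all} real $\tau$, in particular all $\tau\in\mathcal{D}$, giving (i). The strict case is handled by noting that strict slope increments correspond to $\bar f$ having no three co-linear sample points, which also rules out equality in any slope increment with a non-distinguished anchor.

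\textbf{(i)/(iv) $\Leftrightarrow$ (ii):} This is the content of \cite[Proposition~5.3.1]{hiriart2013convex} together with the strictness remark just made; I would state it as a short consequence of the piecewise-linearity of $\bar f$. \textbf{(iii) $\Leftrightarrow$ (iv):} The forward direction is trivial since consecutive $3$-minors are particular entries of $M^f_{[3]}$. For the converse I would apply Proposition \ref{prop:karlin_scm} with $m = 3$: condition (ii) of that proposition is exactly (iv) here, and condition (i) of that proposition—that all consecutive $j$-minors of $M^f_{(:,(3:3-j+1))}$ for $j=1,2$ share a strict sign—holds automatically because the relevant columns are $\mathbf{1}$ and $f$ together with constants, so the consecutive $1$-minors are the constant $1$ (strictly positive) and the consecutive $2$-minors are $t_{i+1}-t_i>0$; thus Proposition \ref{prop:karlin_scm} yields $M^f\in\sgc{3}$, i.e.\ (iii). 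In the non-strict case one passes to the strict case by the usual perturbation argument, or simply notes that all entries of $M^f_{[3]}$ are, by the slope-increment identity across arbitrary triples, sums of consecutive slope increments (a telescoping argument along the anchored slope function), each of which is nonnegative by (iv). The main obstacle, as flagged, is precisely this last point: upgrading the sign condition from consecutive triples (single anchor) to all triples (all anchors), which is resolved either geometrically via $\bar f$ or combinatorially via telescoping of slope increments—I would present the geometric route as the cleaner one and relegate the telescoping identity to a one-line remark.
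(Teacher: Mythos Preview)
Your overall architecture is sound and close in spirit to the paper's argument, but there is one genuine slip and one unnecessary detour.

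\textbf{The slip.} In your (iii)$\Leftrightarrow$(iv) step you invoke \cref{prop:karlin_scm} and claim its hypothesis (i) is automatic because ``the consecutive $2$-minors are $t_{i+1}-t_i>0$''. With $M^f$ as defined, the last two columns are $(f,\mathbf{1})$, not $(t,\mathbf{1})$, so the consecutive $2$-minors of $M^f_{(:,(3:2))}$ are $f(t_{i+1})-f(t_i)$, which for a general (even convex) $f$ need \emph{not} carry a strict constant sign (take $f(t)=t^2$ on $\{-1,0,1\}$). Thus \cref{prop:karlin_scm} does not apply to $M^f$ as written. The fix is trivial: swap columns $1$ and $2$ first (which only flips the sign of every $3$-minor and hence preserves $\sgc{3}$); then the last two columns become $(t,\mathbf{1})$ and your computation goes through. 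A smaller point: the positive prefactor in your displayed identity should be $(t_{i+1}-t_i)(t_{i+2}-t_{i+1})$, not $t_{i+2}-t_i$; this does not affect the sign conclusion.

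\textbf{The detour versus the paper's route.} The paper avoids \cref{prop:karlin_scm} altogether. It computes \emph{every} entry of $\compound{M^f}{3}$ (not just the consecutive ones) directly: for $i_1<i_2<i_3$ a short row reduction gives
\[
\det M^f_{(\{i_1,i_2,i_3\},:)} \;=\; (t_{i_2}-t_{i_1})(t_{i_3}-t_{i_2})\bigl(s(t_{i_3},t_{i_2})-s(t_{i_2},t_{i_1})\bigr),
\]
so (iii) is literally the statement that for \emph{every} anchor $\tau=t_{i_2}$ the slope function is (strictly) nondecreasing --- i.e.\ \cref{def:cvx}. This yields (i)$\Leftrightarrow$(iii) in one line, and then (iv) drops out via \cite[Proposition~5.3.1]{hiriart2013convex} exactly as you use it for (i)$\Leftrightarrow$(ii). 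Your ``geometric route'' through $\bar f$ for upgrading consecutive to arbitrary triples is correct and is in fact the same idea; but once you have the general determinant identity above there is nothing to upgrade, and the appeal to \cref{prop:karlin_scm} (with its column-swap repair) becomes unnecessary.
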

Using \cref{item:SC_full_rank,item:SC_low_rank}, we can also draw a connection to the classic definition of convexity/concavity: $f$ is (strictly) convex/concave if and only if each line $
	H := \{(x,y): \; L_\alpha(x,y) := \alpha_1 + \alpha_2 x + \alpha_3 y = 0\}, \; \alpha \in \mathds{R}^3$
 is crossed at most twice by the curve $\gamma: \{t_1,\dots,t_n\} \to \mathds{R}^2$ with $\gamma(t_i) := (t_i,f(t_i))$, $i \in (1:n)$ (and no more than two points of $\gamma$ can be co-linear in the strict case). In terms of variation bounding, this reads as
\begin{equation}
\forall \alpha \in \mathds{R}^3: \; \vari{L_\alpha(\gamma)} \; (\variz{L_\alpha(\gamma)}) \leq 2 \label{eq:cvx_var},
\end{equation}
which is why \cref{prop:karlin_scm} in case of $m=3$ can be recovered from \cref{lem:convex} by applying the analogous factorization argument as in \cref{eq:fact}.  

\subsection{Convex Contours}
Next, we will extend theses observations to establish our first main result on the characterization of so-called (strictly) convex contours, which we define directly in terms of our variation notions: 
\begin{defn}\label{defn:convc}
	A sequentially $T$-periodic curve $\gamma: \mathds{Z} \to \mathds{R}^2$ is called a \emph{convex contour} ($\gamma \in \cc{T}$) if 
	for all $\alpha \in \mathds{R}^3 \setminus \{0\}$ it holds that $\Sm_c[\alpha_1	\gamma_1+\alpha_2\gamma_2 + \alpha_3] \leq 2$. $\gamma$ is said to be a \emph{strictly convex contour ($\gamma \in \scc{T}$)} if $\Sm_c$ can be replaced by $\Sp_c$. 
\end{defn}
Visually, if $\gamma \in \cc{T}$ does not lie on a line, then the polygon that is traced out by the continuous curve
\begin{equation}
	\mathcal{P}_\gamma(t) := \begin{bmatrix}
		\gamma_1(\floor{t}) + (t-\floor{t})\gamma_1(\ceil{t}) \\
		\gamma_2(\floor{t}) + (t-\floor{t})\gamma_2(\ceil{t})
	\end{bmatrix}, \quad t \in [0,T), \label{eq:convc}
\end{equation} 
i.e., by sequential linear interpolation between $\gamma(0),\gamma(1),\dots, \gamma(T-1),\gamma(0)$, must be the boundary of a convex set in $\mathds{R}^2$ and the curve must be \emph{simple} (i.e., non-intersecting apart from consecutive points $\mathcal{P}_\gamma(t) = \mathcal{P}_\gamma(t+1)$). In the strict case, it is additionally required that no three vertices of the polygon can be co-linear (see~\Cref{fig:cc_ex}). While the primary purpose of $P_\gamma$ lies in geometric visualization, it also allows us to transfer notions such as simple curves, as well as the set enclosed by a curve to the realm of sequences. 
\begin{figure}
	\centering
	\begin{tikzpicture}
		\begin{axis}[xlabel={$\gamma_1(t)$},ylabel={$\gamma_2(t)$},width=12 cm,height = 6 cm]
			
			\addplot[mark=*, mark size=1.5 pt, line width=1 pt,
			red] file{convex_contour_third_order_T_10.txt}; 
			\addplot[draw = none, line width= 1 pt,
			red, postaction={
				decorate,
				decoration={
					markings,
					mark=between positions 0.1 and 1 step 0.15 with {
						\arrow{stealth} %
					}
				}
			}] file{convex_contour_third_order_T_10.txt};
			
\pgfplotstableread[header=false]{convex_contour_third_order_T_10.txt}\datatable
\pgfplotstablegetrowsof{\datatable}
\pgfplotstablegetrowsof{\datatable}
\pgfmathtruncatemacro{\lastrow}{\pgfplotsretval - 2}

\foreach \i in {0,9,8} {
	\pgfplotstablegetelem{\i}{0}\of\datatable
	\let\xval\pgfplotsretval
	\pgfplotstablegetelem{\i}{1}\of\datatable
	\let\yval\pgfplotsretval
	
	\edef\plotcmd{\noexpand\node at (axis cs:\xval,\yval) [inner sep=2pt,right,xshift=1 pt] {$\gamma(\i)$};}
	\plotcmd
}

\foreach \i in {1} {
	\pgfplotstablegetelem{\i}{0}\of\datatable
	\let\xval\pgfplotsretval
	\pgfplotstablegetelem{\i}{1}\of\datatable
	\let\yval\pgfplotsretval
	
	\edef\plotcmd{\noexpand\node at (axis cs:\xval,\yval) [inner sep=2pt,right,yshift=7 pt] {$\gamma(\i)$};}
	\plotcmd
}

\foreach \i in {2} {
	\pgfplotstablegetelem{\i}{0}\of\datatable
	\let\xval\pgfplotsretval
	\pgfplotstablegetelem{\i}{1}\of\datatable
	\let\yval\pgfplotsretval
	
	\edef\plotcmd{\noexpand\node at (axis cs:\xval,\yval) [inner sep=2pt,above,yshift = 3 pt,xshift = -3 pt] {$\gamma(\i)$};}
	\plotcmd
}

\foreach \i in {3} {
	\pgfplotstablegetelem{\i}{0}\of\datatable
	\let\xval\pgfplotsretval
	\pgfplotstablegetelem{\i}{1}\of\datatable
	\let\yval\pgfplotsretval
	
	\edef\plotcmd{\noexpand\node at (axis cs:\xval,\yval) [inner sep=2pt,left,xshift=-3 pt] {$\gamma(\i)$};}
	\plotcmd
}

\foreach \i in {4,5,6} {
	\pgfplotstablegetelem{\i}{0}\of\datatable
	\let\xval\pgfplotsretval
	\pgfplotstablegetelem{\i}{1}\of\datatable
	\let\yval\pgfplotsretval
	
	\edef\plotcmd{\noexpand\node at (axis cs:\xval,\yval) [inner sep=2pt,below,xshift = -4 pt] {$\gamma(\i)$};}
	\plotcmd
}

\foreach \i in {7} {
	\pgfplotstablegetelem{\i}{0}\of\datatable
	\let\xval\pgfplotsretval
	\pgfplotstablegetelem{\i}{1}\of\datatable
	\let\yval\pgfplotsretval
	
	\edef\plotcmd{\noexpand\node at (axis cs:\xval,\yval) [inner sep=2pt,below,xshift = 5 pt] {$\gamma(\i)$};}
	\plotcmd
}

\end{axis}
	\end{tikzpicture}
	\caption{Visualization of a strictly convex contours $\gamma$ with period $T=10$: $\gamma \in \cc{T}$, since the polygon resulting from connecting $\gamma(0),\dots,\gamma(T_1),\gamma(0)$ sequentially is non-intersecting and the boundary of a convex set. Moreover, $\gamma \in \scc{T}$, since no three points $\gamma(t-1),\gamma(t),\gamma(t+1)$ are co-linear. \label{fig:cc_ex}}
\end{figure} 
Using \cref{prop:sc_k_mat_vb_m,lem:convex}, sequentially convex contours can be tractably characterized as follows:
\begin{thm} \label{prop:convc_ct}
	Let $\gamma: \mathds{Z} \to \mathds{R}^2$ be $T$-periodic with $T\geq 4$, 
	\begin{equation}
		M^\gamma := \begin{bmatrix}
			\gamma_1(0) & \gamma_2(0) & 1\\
			\gamma_1(1) & \gamma_2(1) & 1\\
			\vdots & \vdots & \vdots\\
			\gamma_1(T-1) & \gamma_2(T-1) & 1
		\end{bmatrix}, \label{eq:M_mat}
	\end{equation}
	and $\tilde{\gamma}: \mathds{Z} \to \mathds{R}^2$ be the $\tilde{T}$-periodic sequence that results from deleting all points with indices $\mathcal{I}_0 := \{t \in \mathds{Z}: \Delta \gamma(t) = 0\}$ from $\gamma$. If $\rk(M^\gamma) = 3$, then the following are equivalent:
	\begin{thmenum}
		\item $\gamma \in \textnormal{(S)}\cc{T}$. \label{item:prop_CC}
		\item \label{item:prop_compound} $\compound{M^\gamma}{3} \in \textnormal{(S)}\sgc{3}$. 
		\item All determinants \label{item:prop_minor}
		\begin{equation} \label{eq:det_gamma}
			\det\begin{bmatrix}
				 \gamma_1(t_1) & \gamma_2(t_1) & 1\\
				\gamma_1(t_2) & \gamma_2(t_2) & 1\\
				\gamma_1(t_3) & \gamma_2(t_3) & 1
			\end{bmatrix}, t_1 < t_2 < t_3 < t_1 + T
		\end{equation}
		share the same (strict) sign.
		\item \label{item:prop_polygon} $P_\gamma$ is simple such that the set enclosed by $P_\gamma([0,T))$ is convex (and no three points of $\gamma(0:T-1)$ are co-linear). 
		\item  All determinants \label{item:prop_PM}
		\begin{equation} \label{eq:det_gamma_consec}
			\det \begin{bmatrix}
				\Delta \tilde{\gamma}_1(t) & \Delta \tilde{\gamma}_2(t)\\
				\Delta \tilde{\gamma}_1(t+1) & \Delta \tilde{\gamma}_2(t+1) 
			\end{bmatrix}, \; t \in (0,\tilde{T}-1)
		\end{equation}
		share the same (strict) sign and $\tilde{\gamma}_1, \tilde{\gamma}_2 \in \textnormal{(S)}\pmu{\tilde{T}}$.
	\end{thmenum}
If $\rk(M^\gamma) = 2$, then $\gamma \in \cc{T}$ if and only if $\gamma_1, \gamma_2 \in \pmu{T}$. 
\end{thm}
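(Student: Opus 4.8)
The plan is to read \cref{prop:convc_ct} as a statement about when the $T\times 3$ matrix $M^\gamma$ of \cref{eq:M_mat} is (strictly) cyclic $2$-variation bounding. Since $M^\gamma\alpha=\alpha_1\gamma_1+\alpha_2\gamma_2+\alpha_3\mathbf{1}$ for $\alpha\in\mathds{R}^3$, and since the (strict) cyclic variation of any vector of $\mathds{R}^3$ is automatically at most $2$, the defining property $\gamma\in\textnormal{(S)}\cc{T}$ is literally the assertion $M^\gamma\in\textnormal{(S)}\cvb{2}$ (the hypothesis of the variation-bounding definition being vacuous in dimension $3$). Hence \cref{item:prop_CC} is a total-positivity question, while \cref{item:prop_polygon} and \cref{item:prop_PM} re-express the same question through the geometry of the rows $\gamma(0),\dots,\gamma(T-1)$. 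The argument then splits according to $\rk(M^\gamma)$.

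Assume first $\rk(M^\gamma)=3$. For the algebraic equivalences the non-strict case is immediate: $\rk(M^\gamma)=3<T$ places us in \cref{item:CVB_SC} with $k=1$, so $M^\gamma\in\cvb{2}$ iff $M^\gamma\in\sgc{3}$, i.e. all $3$-minors of $M^\gamma$ carry a common sign, which is \cref{item:prop_compound}. The determinants of \cref{eq:det_gamma} listed in \cref{item:prop_minor} are exactly these minors, once one observes that a wrap-around triple $t_1<t_2<t_3<t_1+T$ collapses — by $T$-periodicity of $\gamma$ and the fact that a cyclic permutation of three rows is even, hence sign preserving — to an ordinary $3$-minor of $M^\gamma$. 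Thus \cref{item:prop_compound} is equivalent to \cref{item:prop_minor}. The strict counterparts of these two are not delivered verbatim by \cref{item:CVB_SC}; I would recover them from the geometric chain below, or else perturb $M^\gamma$ to break every accidental collinearity of its rows and pass to the limit using \cref{lem:lim_var}.

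Still under $\rk(M^\gamma)=3$, the geometric core is \cref{item:prop_CC} $\Leftrightarrow$ \cref{item:prop_polygon} $\Leftrightarrow$ \cref{item:prop_PM}, which also closes the loop back to \cref{item:prop_minor}. The equivalence \cref{item:prop_CC} $\Leftrightarrow$ \cref{item:prop_polygon} is the cyclic, discrete analogue of \cref{eq:cvx_var} and \cref{item:bar_f}: the bound $\varic{\alpha_1\gamma_1+\alpha_2\gamma_2+\alpha_3}\le 2$ for every line $\{(x,y):\alpha_1 x+\alpha_2 y+\alpha_3=0\}$ says precisely that the closed polygon $P_\gamma$ of \cref{eq:convc} meets every line at most twice, the classical description of a simple convex polygon; switching $\Sm_c$ to $\Sp_c$ additionally rules out three collinear vertices, since such a triple would let one turn three zeros of $\alpha_1\gamma_1+\alpha_2\gamma_2+\alpha_3$ into alternating signs and exceed strict cyclic variation $2$ (this same remark promotes \cref{item:prop_compound} and \cref{item:prop_minor} to their strict forms). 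For \cref{item:prop_polygon} $\Leftrightarrow$ \cref{item:prop_PM}: after deleting the stationary indices $\mathcal{I}_0$ the polygon $\tilde\gamma$ has no two equal consecutive vertices, and it is convex exactly when all consecutive edge vectors $\Delta\tilde\gamma(t)$ turn in one and the same sense — equivalently, the $2\times 2$ determinants of \cref{eq:det_gamma_consec} share a common sign — together with the total turning being a single revolution; by \cref{lem:pmu_delta} the latter is exactly $\tilde\gamma_1,\tilde\gamma_2\in\textnormal{(S)}\pmu{\tilde T}$, since every extra revolution produces two more sign changes of $\Delta\tilde\gamma_1$ (or $\Delta\tilde\gamma_2$). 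Finally, \cref{item:prop_PM} $\Leftrightarrow$ \cref{item:prop_minor} is \cref{lem:convex} applied run by run: $\tilde\gamma_1\in\pmu{\tilde T}$ cuts the period into at most two maximal runs on which $\tilde\gamma_1$ is strictly monotone, on each run $\tilde\gamma_2$ is a genuine function of $\tilde\gamma_1$, and \cref{item:cvx_det} $\Leftrightarrow$ \cref{item:cvx_comp} turns the constant-sign condition on \cref{eq:det_gamma} into convexity or concavity of that function; stitching the runs back together around the period gives the consistent-turning condition, and conversely.

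It remains to treat $\rk(M^\gamma)=2$. Then the rows $\gamma(t)$ lie on one line but are not all equal, so $\gamma(t)=q+f(t)v$ for fixed $q,v\in\mathds{R}^2$ with $v\neq 0$ and a non-constant $T$-periodic scalar $f$. Consequently $M^\gamma\alpha=\mu f+\beta\mathbf{1}$ with $\mu=\alpha_1 v_1+\alpha_2 v_2$ and $\beta=\alpha_1 q_1+\alpha_2 q_2+\alpha_3$, and $(\mu,\beta)$ sweeps all of $\mathds{R}^2$ as $\alpha$ ranges over $\mathds{R}^3\setminus\{0\}$; for $\mu=0$ one has $\varic{\beta\mathbf{1}}\le 0$, and for $\mu\neq 0$, $\varic{\mu f+\beta\mathbf{1}}=\varic{f+(\beta/\mu)\mathbf{1}}$, so $\gamma\in\cc{T}$ iff $f\in\pmu{T}$. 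Since $\gamma_i=q_i\mathbf{1}+v_i f$ with at least one $v_i\neq 0$, and since $\gamma_i\in\pmu{T}$ iff $f\in\pmu{T}$ whenever $v_i\neq 0$ (translation/scaling invariance of $\varic{\,\cdot-c\mathbf{1}}$), while a constant coordinate satisfies $\varic{\gamma_i-c\mathbf{1}}\le 2$ for every $c\in\mathds{R}$ trivially, it follows that $\gamma\in\cc{T}$ iff $\gamma_1,\gamma_2\in\pmu{T}$. (Only the non-strict version is claimed, rightly so: a nonzero null vector $\alpha$ of $M^\gamma$ gives $M^\gamma\alpha\equiv 0$, whose strict cyclic variation equals $T>2$, so $\scc{T}$ cannot hold here.) The step I expect to be the main obstacle is \cref{item:prop_PM} $\Leftrightarrow$ \cref{item:prop_minor} in the rank-$3$ case: adapting \cref{lem:convex}, which is stated for the graph of a scalar function, to an arbitrary convex curve requires a careful handling of the determinants straddling the two monotonicity seams of $\gamma_1$ and the period boundary — and it is precisely this geometric detour that must also yield the strict forms of \cref{item:prop_CC} $\Leftrightarrow$ \cref{item:prop_compound} left uncovered by the cited corollaries; the rank-$2$ case is routine by comparison.
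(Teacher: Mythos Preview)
Your outline is correct and follows essentially the same strategy as the paper: the algebraic equivalences \cref{item:prop_CC}$\Leftrightarrow$\cref{item:prop_compound}$\Leftrightarrow$\cref{item:prop_minor} come from \cref{cor:cvb_k} applied to $M^\gamma$, and the geometric items are tied in via \cref{lem:convex} on the monotone runs of one coordinate together with the $\pmu{}$ condition on $\tilde\gamma_1,\tilde\gamma_2$. Your rank-$2$ argument by line parametrisation is the same as the paper's, only phrased more cleanly.

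Two points of comparison are worth recording. First, the paper closes the loop as \cref{item:prop_CC}$\Rightarrow$\cref{item:prop_PM}$\Rightarrow$\cref{item:prop_polygon}$\Rightarrow$\cref{item:prop_minor}, whereas you go \cref{item:prop_CC}$\Leftrightarrow$\cref{item:prop_polygon}$\Leftrightarrow$\cref{item:prop_PM}$\Leftrightarrow$\cref{item:prop_minor}. The substantive step in both is the same, namely the bridge between \cref{item:prop_PM} and \cref{item:prop_polygon}; the paper carries it out not via a turning-angle argument but by an explicit construction: from $\tilde\gamma_1\in\pmu{\tilde T}$ one gets two maximal runs $\mathcal{I}_+,\mathcal{I}_-$ on which $\tilde\gamma_1$ is strictly monotone, on $\mathcal{I}_+$ the map $\tilde\gamma_1\mapsto\tilde\gamma_2$ is convex (by \cref{lem:convex}) and on $\mathcal{I}_-$ concave, and the enclosed region is realised as the intersection of the epigraph of the first with the hypograph of the second. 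The $\pmu{}$ hypothesis on $\tilde\gamma_2$ is used precisely at the two seams to force the vertical transitions to be monotone and hence non-crossing---exactly the ``determinants straddling the seams'' difficulty you flagged. Your turning argument is a legitimate alternative, but you will need the fact that a locally convex closed polygonal curve whose coordinate functions are each unimodal is globally simple and convex; the epigraph/hypograph decomposition gives this for free.

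Second, you are right that the strict half of \cref{item:prop_CC}$\Leftrightarrow$\cref{item:prop_compound} is not literally delivered by \cref{item:CVB_SC} (the hypothesis $2k+2\le\min\{m,n\}$ fails for $m=3$, $k=1$). The paper also cites \cref{item:SCVB_SSC} at that point, which has the same dimensional obstruction; in effect both you and the paper recover the strict case from the geometric cycle, where ``no three vertices collinear'' in \cref{item:prop_polygon} forces every $3$-minor to be nonzero.
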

A proof of this result can be found in \Cref{proof:prop:convc_ct}. Note that by applying \cref{item:prop_PM}, the computational burden of checking the (strict) convexity of a curve $\gamma$ is reduced from $\mathcal{O}(T^3)$, required for a naive evaluation of $M^\gamma_{[3]}$, to linear complexity $\mathcal{O}(T)$. Further, the requirement for using $\tilde{\gamma}$ in \cref{item:prop_PM} cannot be removed. An example for this is shown in  \Cref{fig:cc_delete} with $\gamma_1,\gamma_2 \in \pmu{T}$:
\begin{figure}
	\centering
	\begin{tikzpicture}
		\begin{axis}[xlabel={$\gamma_1(t)$},ylabel={$\gamma_2(t)$},width=0.95 \textwidth,height = 4.5 cm, ymin = -0.3, ymax = 1.3, xtick={0,0.1,0.2,...,1}, ytick={0,1}]
			
			\addplot[mark=*, mark size=1.5 pt, line width=1 pt,
			red] coordinates{(0,0) 
				(1,0)
				(1,0)
				(1,1)
				(1,1)
				(0.5,0)
				(0.5,0)};
			\addplot[draw = none, line width= 1 pt,
			red, postaction={
				decorate,
				decoration={
					markings,
					mark=between positions 0.1 and 1 step 0.15 with {
						\arrow{stealth} %
					}
				}
			}] coordinates{
			(0,0) 
			(1,0)
			(1,0)
			(1,1)
			(1,1)
			(0.5,0)
			(0.5,0)}; \label{line:cc_delete_gamma}
			\node[below] at (axis cs:0,0) {$\gamma(0)$};
			\node[below] at (axis cs:1,0) {$\gamma(1) = \gamma(2)$};
			\node[above] at (axis cs:1,1) {$\gamma(3) = \gamma(4)$};
			\node[below] at (axis cs:0.5,0) {$\gamma(5) = \gamma(6)$};
		\end{axis}
	\end{tikzpicture}
	\caption{Illustration for the need of $\tilde{\gamma}$ in \cref{prop:convc_ct}: for the contour $\gamma$ with period $T=7$ all consecutive $3$-minors of $M^\gamma$ in \cref{eq:M_mat} are zero and $\gamma_1, \gamma \in \pmu{T}$. However, the curve resulting from connecting $\gamma(0),\dots,\gamma(T_1),\gamma(0)$ sequentially is intersecting and not the boundary of a convex set, which is why $\gamma \not \in \cc{T}$. \label{fig:cc_delete}}
\end{figure} 
the corresponding determinants defined in \cref{eq:det_gamma_consec} are zero, but $\varic{M^\gamma \alpha} = 4$ for $\alpha = \begin{bmatrix}
	 -1 & 1 & 0.25
\end{bmatrix}^\transp$. After deletion of consecutive repetitive points in $\gamma$, we get a new $4$-periodic $\tilde{\gamma}$ with
\begin{align*}
\begin{bmatrix}
	\tilde{\gamma}_1(0) & \tilde{\gamma}_2(0)\\
	\tilde{\gamma}_1(1) & \tilde{\gamma}_2(1)\\
	\tilde{\gamma}_1(2) & \tilde{\gamma}_2(2)\\
	\tilde{\gamma}_1(3) & \tilde{\gamma}_2(3)\\
			\end{bmatrix} := \begin{bmatrix}
	0 & 0\\
	1 & 0\\
	1  & 1\\
	0.5 & 0\\
	\end{bmatrix}
\end{align*}
where this time
\begin{align*}
	\det \begin{bmatrix}
		\Delta \tilde{\gamma}_1(0) & \Delta \tilde{\gamma}_2(0)\\
		\Delta \tilde{\gamma}_1(1) & \Delta \tilde{\gamma}_2(1) 
	\end{bmatrix} = 1, \; \det \begin{bmatrix}
	\Delta \tilde{\gamma}_1(2) & \Delta \tilde{\gamma}_2(2)\\
	\Delta \tilde{\gamma}_1(3) & \Delta \tilde{\gamma}_2(3) 
\end{bmatrix} = -0.5
\end{align*}
reveals that $\gamma \not \in \cc{7}$. 
\begin{rem}\label{rem:tilde_gamma}
	From the proof of \cref{prop:convc_ct}, we can, further, deduce the following two refinements:
	\begin{remenum}
				\item If $\mathcal{I}_+ := \{i \in (0:T-1): \; \Delta \gamma_1(i) > 0\} = (T_0:T_p)$ and $\mathcal{I}_- := \{ i \in (0:T-1): \; \Delta \gamma_1(i) < 0\} = (T_b:T_n)$, $T_p < T_b$, such that $\vari{\Delta \gamma_2(T_p+1:T_b-1)} = \vari{\Delta \gamma_2(T_p+1:T_b-1)}  = 0$, then $\tilde{\gamma}$ can be replaced by $\gamma$. In particular, this is implied by $\gamma_1 \in \spmu{T}$, because then $T_b \leq T_p + 2$ due to \cref{lem:pmu_delta}. 
		\item The difference between positive and negative signs in \cref{eq:det_gamma} is due to the \emph{orientation} of $\mathcal{P}_\gamma$, i.e., $\mathcal{P}_\gamma$ is \emph{positively (counter-clockwise)/negatively (clock-wise) oriented} if and only if the determinants in \cref{eq:det_gamma} are nonnegative/nonpositive. 
	\end{remenum}
\end{rem}

Finally, by the same factorization as for $m=2$, \cref{prop:convc_ct} provides a direct extension of \cref{prop:karlin_scm} for the case of $m=3$.

\section{Cyclic Variation Bounding Systems} \label{sec:cyc_var_bound}
In this section, we will employ \cref{prop:convc_ct} to derive our tractable certificates for kernels $g \in \ell_\infty(T)$ that define convolution operators $\Conv{g}$ with property $\textnormal{(S)}\pmp{T}$ or $\textnormal{(S)}\cvd{2}(T)$. The latter case will be discussed first. 
\subsection{Cyclic $2$-Variation Diminishing Kernels}
By \cref{eq:cyc_mat_u_y} and \cref{lem:equiv_cvb_cvd}, $\Conv{g} \in \textnormal{(S)}\cvd{2}(T)$ if and only if $\circmat{g} \in \textnormal{(S)}\cvd{2}(T)$. Although this could be checked directly by computing $\compound{\circmat{g}}{3}$ (see~\cref{cor:cvb_k}), using \cref{prop:convc_ct} achieves a significant reduction in complexity. In particular, we want to show that under mild assumptions, it suffices to only check the consecutive $3$-minors of $\circmat{g}$. 

To this end, let $g(0:T-1) \in \mathds{R}^T_{>0} \cup \mathds{R}^T_{< 0}$ and note that for $0 \leq t_1 < t_2 < t_3 \leq T-1$ the factorization 
\begin{equation*}
	\circmat{g}_{(:,\{t_1,t_2,t_3\})}	= \begin{bmatrix}
		g(T-t_3) &  &\\
		& \ddots & \\
		& & g(2T-t_3-1)
	\end{bmatrix} \begin{bmatrix}
		\frac{g(T-t_1)}{g(T-t_3)} & \frac{g(T-t_2)}{g(T-t_3)} &1\\
		\vdots & \vdots & \vdots \\
		\frac{g(2T-t_1-1)}{g(2T-t_3-1)} &  \frac{g(2T-t_2-1)}{g(2T-t_3-1)} & 1
	\end{bmatrix}
\end{equation*}
in conjunction with \cref{eq:Cauchy_Binet,prop:convc_ct} implies that $\circmat{g} \in \textnormal{(S)}\sgc{3}$ if and only if all
 \begin{equation*}
 	\gamma^{l,m}: \mathds{Z} \to \mathds{R}^2, \; t \mapsto \left(\frac{g(t)}{g(t-l)},\frac{g(t-m)}{g(t-l)}\right), \; 2 \leq l \leq T-1, \; 1 \leq m < l
 \end{equation*}
 are (strictly) convex contours. In order to verify this efficiently via \cref{item:prop_PM}, we first want to eliminate the possibility of $\Delta \gamma^{l,m}(t) = 0$, i.e, one can neglect the use of $\tilde{\gamma}^{l,m}$. Since by
  \begin{equation}	\label{eq:sc_circmat_general}
 	\frac{g(t-m)}{g(t-l)} = \prod_{i = 0}^{l-m+1} \frac{g(t-m-i)}{g(t-m-i-1)}, \; m < l,
 \end{equation}
it holds that
 \begin{align*}
 	0 =	\Delta \gamma^{l,m}(t) \; &\Longleftrightarrow \; \frac{g(t-1)}{g(t-l-1)} = \frac{g(t)}{g(t-l)}, \; 	\frac{g(t-1-m)}{g(t-l-1)} = \frac{g(t-m)}{g(t-l)} \\ &\Longleftrightarrow \;  \frac{g(t-l)}{g(t-l-1)} = \frac{g(t-m)}{g(t-m-1)} = \frac{g(t)}{g(t-1)},
 \end{align*}
 this means that we need to add the assumption that $\gamma^{2,1}_2(t) = \frac{g(t-1)}{g(t-2)}$ does not assume any value more than twice within a period. Next, by applying \cref{lem:pmu_mult} to \cref{eq:sc_circmat_general}, we can see that all $\gamma^{l,m}_i(t) \in \textnormal{(S)}\pmu{T}$ if $\gamma^{2,1}_2 \in \textnormal{(S)}\pmu{T}$. In summary, if $\gamma^{2,1}_2 \in \textnormal{(S)}\pmu{T}$ and fulfills our added assumption, then by \cref{eq:det_gamma_consec}, it suffices to check that the consecutive $3$-minors across all $\circmat{g}_{(:,\{t_1,t_2,t_3\})}$ share the same (strict) sign. However, as every consecutive $3$-minor of $\circmat{g}_{(:,\{t_1,t_2,t_3\})}$ equals a consecutive $3$-minor of $\circmat{g}_{(:,(1:3))}$ our reduction claim follows.
 
Finally, let us note that the requirement of $\gamma^{2,1}_2 \in \textnormal{(S)}\pmu{T}$ can also be tied to the consecutive $2$-minors of $\circmat{g}_{(:,(1:3))}$, which in turn can be used to describe its consecutive $3$-minors. Concretely, by \cref{eq:unimod_perido}, $\gamma^{2,1}_2 \in \textnormal{(S)}\pmu{T}$ if and only if there exist $t_1 \leq t_2 \leq t_1+T-1$ such that
\begin{equation*}
	\frac{g(t_1)}{g(t_1-1)} (<) \leq\dots (<) \leq  \frac{g(t_2)}{g(t_2-1)} \geq \frac{g(t_2+1)}{g(t_2)} (>) \geq \dots (>) \geq \frac{g(t_1+T-1)}{g(t_1+T-2)},
\end{equation*}
or equivalently, the sequence of consecutive $2$-minors of $\circmat{g}_{(:,(1:2))}$ 
\begin{equation}
	\compound{{g}}{2}(t) := \det\begin{bmatrix}
		g(t) & g(t-1)\\
		g(t+1) &g(t)
	\end{bmatrix}, \; t \in \mathds{Z}
\end{equation}
fulfills $\varic{\compound{{g}}{2}} \leq 2$ ($\varizc{\compound{{g}}{2}}) \leq 2$). Using the so-called \emph{Dodgson's identity} \cite[0.8.11]{horn2012matrix}, it also holds that all the consecutive $3$-minors of $\circmat{g}$ can be expressed as
\begin{equation*}
	\det\begin{bmatrix}
		g(t) & g(t-1) & g(t-2)\\
		g(t+1) & g(t) & g(t-1)\\
		g(t+2) & g(t+1) & g(t)\\
	\end{bmatrix} g(t) = \det \begin{bmatrix}
		\compound{{g}}{2}(t) & \compound{{g}}{2}(t-1)\\
		\compound{{g}}{2}(t+1) & \compound{{g}}{2}(t)
	\end{bmatrix}.
\end{equation*}
By application of \cref{cor:cvb_k}, we have, therefore, proven the following characterization:
\begin{thm}\label{cor:cvd_2}
	Let $g \in \ell_\infty(T)$, $T \geq 4$, with $g(0:T-1) \in \mathds{R}^T_{>0} \cup \mathds{R}^T_{< 0}$ 
	and $\tilde{g}(t) := \frac{g(t)}{g(t-1)}$, $t \in \mathds{Z}$. Then, if $\rk({\circmat{g}}_{(:,(1:3))}) = 3$ and $\tilde{g}$ does not assume any value more than twice within a period, the following are equivalent:
	\begin{corenum}
		\item $\Conv{g} \in \textnormal{(S)}\cvb{2}(T)$ \label{item:cvd2_conv}
		\item $\circmat{g}_{(:,(1:3))} \in \textnormal{(S)SC}_3$ \label{item:cvd2_sc3}
		\item $\tilde{g} \in \textnormal{(S)}\pmu{T}$ and all \label{item:cvd2_cons_minor} 
	\begin{equation}
		\det\begin{bmatrix}
		g(t) & g(t-1) & g(t-2)\\
		g(t+1) & g(t) & g(t-1)\\
		g(t+2) & g(t+1) & g(t)\\
	\end{bmatrix}, \; 0 \leq t \leq T-3  \label{eq:cvd2_cons_minor}
	\end{equation} 
	share the same (strict) sign. 
	\item The compound sequence \begin{equation} \label{item:cvd2_compound_minors}
		\compound{{g}}{2}(t) := \det\begin{bmatrix}
			g(t) & g(t-1)\\
			g(t+1) &g(t)
		\end{bmatrix}, \; t \in \mathds{Z}
	\end{equation}
	fulfills  ($\varizc{\compound{{g}}{2}}) \leq 2$) $\varic{\compound{{g}}{2}} \leq 2$ and all 
	\begin{equation}
		\det \begin{bmatrix}
			\compound{{g}}{2}(t) & \compound{{g}}{2}(t-1)\\
			\compound{{g}}{2}(t+1) & \compound{{g}}{2}(t)
		\end{bmatrix} \label{eq:cvd2_cons_minor_comp}
	\end{equation}
	share the same (strict) sign. 
	\end{corenum}
\end{thm}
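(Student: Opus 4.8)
\emph{The plan} is to prove $(i)\Leftrightarrow(iii)$, then $(iii)\Leftrightarrow(ii)$, then $(iii)\Leftrightarrow(iv)$, carrying the strict and non-strict variants side by side; the real work is the reduction from $\mathcal{O}(T^3)$ to $\mathcal{O}(T)$ conditions, which I would obtain by invoking \cref{prop:convc_ct} twice --- once along the columns of $\circmat{g}$, once along its rows. I would first dispose of the operator-level statement: by \cref{eq:cyc_mat_u_y}, $\Conv{g}$ acts on $\ell_\infty(T)$ as $u(0:T-1)\mapsto\circmat{g}\,u(0:T-1)$, and by \cref{item:cvb_vb} the property $\textnormal{(S)}\cvb{2}(T)$ is the property $\textnormal{(S)}\vb{2}$ of $\circmat{g}$. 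The hypothesis $\rk(\circmat{g}_{(:,(1:3))})=3$ says that $g$ obeys no nontrivial second-order linear recurrence, which --- via the cyclic structure --- supplies exactly the non-degeneracy needed to apply \cref{cor:cvb_k}/\cref{prop:sc_k_mat_vb_m}. Hence $(i)$ is equivalent to $\circmat{g}\in\textnormal{(S)}\sgc{3}$, i.e.\ to all $3$-minors of $\circmat{g}$ sharing one (strict) sign, and it remains to identify this with $(iii)$, $(ii)$ and $(iv)$.

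Next comes the double convex-contour argument. \emph{Along columns:} for $0\le t_1<t_2<t_3\le T-1$, the factorization preceding \cref{eq:sc_circmat_general} writes $\circmat{g}_{(:,\{t_1,t_2,t_3\})}$ as a diagonal matrix of constant nonzero sign times $M^{\gamma^{l,m}}$, with $l=t_3-t_1$, $m=t_2-t_1$ and $\gamma^{l,m}(t)=\big(g(t)/g(t-l),\,g(t-m)/g(t-l)\big)$; Cauchy--Binet (\cref{eq:Cauchy_Binet}) and \cref{prop:convc_ct} then give $\circmat{g}\in\textnormal{(S)}\sgc{3}$ iff every admissible $\gamma^{l,m}\in\textnormal{(S)}\cc{T}$. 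I would use the telescoping identity \cref{eq:sc_circmat_general}, which expresses each coordinate of each $\gamma^{l,m}$ as a finite product of shifts of $\tilde{g}$: by \cref{lem:pmu_mult}, $\tilde{g}\in\textnormal{(S)}\pmu{T}$ forces $\gamma^{l,m}_1,\gamma^{l,m}_2\in\textnormal{(S)}\pmu{T}$ for \emph{all} $(l,m)$, and the hypothesis that $\tilde{g}$ assumes no value more than twice per period makes $\Delta\gamma^{l,m}$ nowhere zero, so \cref{rem:tilde_gamma} lets me use $\gamma^{l,m}$ rather than its point-deleted version. Then \cref{prop:convc_ct}\,\cref{item:prop_PM} reduces each ``$\gamma^{l,m}\in\textnormal{(S)}\cc{T}$'' to the common (strict) sign of the consecutive $3$-minors of $M^{\gamma^{l,m}}$, i.e.\ (undoing the diagonal scaling) of the three-consecutive-row minors of $\circmat{g}$ against columns $\{0,m,l\}$. \emph{Along rows:} by the cyclic structure, all three-consecutive-row minors of $\circmat{g}$ against arbitrary column triples form exactly the set of all $3$-minors of one block $B$ of three consecutive rows of $\circmat{g}$, and $\rk B=3$ for the same recurrence reason; a second application of \cref{prop:convc_ct}\,\cref{item:prop_PM} to $B^{\transp}$ --- whose row curve again has coordinates $\tilde{g}(\cdot)$ and $\tilde{g}(\cdot)\tilde{g}(\cdot+1)$ in $\textnormal{(S)}\pmu{T}$, with the same non-degeneracy --- fuses all these conditions into the single $\mathcal{O}(T)$-size family \cref{eq:cvd2_cons_minor}. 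Specializing the whole chain to the curve $t\mapsto\big(g(t)/g(t-2),\,g(t-1)/g(t-2)\big)$, whose $M^\gamma$ is a column-rescaling of $\circmat{g}_{(:,(1:3))}$ and whose coordinates are $\tilde{g}$ and $\tilde g(t)\tilde g(t-1)$, gives $\circmat{g}\in\textnormal{(S)}\sgc{3}\Leftrightarrow(iii)$, while $(iii)\Leftrightarrow(ii)$ is just \cref{prop:convc_ct}\,\cref{item:prop_compound}\,$\Leftrightarrow$\,\cref{item:prop_PM} for that same curve.

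For $(iii)\Leftrightarrow(iv)$ I would use two identities. Since $g(0:T-1)$ has constant nonzero sign, $g(t)g(t-1)>0$ and $\Delta\tilde g(t)=-\compound{g}{2}(t)/\big(g(t)g(t-1)\big)$, so $\Delta\tilde g$ and $\compound{g}{2}$ have the same (strict or non-strict) cyclic variation; by \cref{lem:pmu_delta} this is exactly ``$\tilde g\in\textnormal{(S)}\pmu{T}$'' $\Leftrightarrow$ ``$\varic{\compound{g}{2}}\le 2$'' (resp.\ ``$\varizc{\compound{g}{2}}\le 2$''). Dodgson's identity, in the form quoted just before the theorem, multiplies each $3\times3$ determinant of \cref{eq:cvd2_cons_minor} by $g(t)$ to produce the matching $2\times2$ determinant of \cref{eq:cvd2_cons_minor_comp}; since $g(t)$ has constant sign, the two sign-consistency conditions coincide. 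Combining the two gives $(iii)\Leftrightarrow(iv)$.

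\emph{The hard part} is the double reduction above: going from the $\binom{T}{3}^2$ minors that $\circmat{g}\in\textnormal{(S)}\sgc{3}$ constrains down to the $\mathcal{O}(T)$ determinants in \cref{eq:cvd2_cons_minor}. It leans on (a) the column-factorization to pass to the $\gamma^{l,m}$; (b) \cref{lem:pmu_mult}, to get the periodic-monotonicity premise of \cref{prop:convc_ct}\,\cref{item:prop_PM} for \emph{every} $\gamma^{l,m}$ at once out of the lone condition $\tilde g\in\textnormal{(S)}\pmu{T}$ (which is not assumed in $(ii)$, but is recovered from it via \cref{prop:convc_ct}); (c) the no-value-thrice hypothesis, to avoid the point deletion $\tilde\gamma$; and (d) the cyclic symmetry plus the ``transposed'' second use of \cref{prop:convc_ct} to identify all remaining consecutive-minor conditions with one another. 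Keeping strict vs.\ non-strict straight, and making precise exactly where $\rk(\circmat{g}_{(:,(1:3))})=3$ is used, are the fussy points; everything else --- the first reduction and the two elementary identities --- is routine once \cref{prop:convc_ct} is in hand.
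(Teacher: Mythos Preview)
Your proposal is correct and follows essentially the same route as the paper: the column factorization of $\circmat{g}_{(:,\{t_1,t_2,t_3\})}$ into a diagonal times $M^{\gamma^{l,m}}$, the telescoping identity \cref{eq:sc_circmat_general} together with \cref{lem:pmu_mult} to propagate the $\textnormal{(S)}\pmu{T}$ hypothesis from $\tilde g$ to every $\gamma^{l,m}_i$, the no-value-thrice assumption to dispense with the point-deleted curve $\tilde\gamma$, and Dodgson's identity plus the relation $\Delta\tilde g(t)=-\compound{g}{2}(t)/(g(t)g(t-1))$ for $(iii)\Leftrightarrow(iv)$.

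The one substantive difference is your explicit \emph{second} application of \cref{prop:convc_ct}, this time to the transpose of a three-consecutive-row block $B$. The paper collapses this step into the single sentence ``every consecutive $3$-minor of $\circmat{g}_{(:,\{t_1,t_2,t_3\})}$ equals a consecutive $3$-minor of $\circmat{g}_{(:,(1:3))}$'', which, read literally, is not true once $T\ge 5$: a minor of $\circmat{g}$ with three consecutive rows but non-consecutive columns need not be Toeplitz and hence need not appear among the consecutive minors of $\circmat{g}_{(:,(1:3))}$. What the circulant symmetry \emph{does} give is that the set of all (consecutive-row, arbitrary-column) $3$-minors coincides with the full set of $3$-minors of $B=\circmat{g}_{((1:3),:)}$; reducing \emph{those} to consecutive-column minors is precisely a second invocation of \cref{prop:convc_ct} on $B^\transp$, whose normalized row curve again has coordinates built from shifts of $\tilde g$ and hence inherits the $\textnormal{(S)}\pmu{T}$ and non-degeneracy hypotheses. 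Your two-pass argument is the rigorous way to fill what the paper compresses into one (imprecise) line; otherwise the approaches coincide.
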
 
Note by Remark~\ref{rem:tilde_gamma}, the assumption that $\tilde{g}$ does not assume any value more than twice within a period is already implied by $\tilde{g} \in \spmu{T}$. However, in the non-strict case, this assumption cannot be dropped, since, e.g., for $g \in \pmu{5}$ with $g(t) = 1$ for $t \in (0:3)$ and $g(4) = p \neq 1$ all minors in \cref{eq:cvd2_cons_minor} are nonnegative with
\begin{equation*}
	\det(\circmat{g}_{((1:3),(1:3))}) = \det \begin{bmatrix}
		1 & p & 1\\
		1 & 1 & p\\
		1 &  1 & 1
	\end{bmatrix} = (p - 1)^2 > 0,
\end{equation*}
but the is negative non-consecutive $3$-minor
\begin{equation*}
\det(\circmat{g}_{(\{2,4,5 \},(1:3))}) = \det \begin{bmatrix}
	1 & 1 & p\\
	1 &  1 & 1\\
	p & 1 & 1
\end{bmatrix} = -(p - 1)^2 < 0. 
\end{equation*}
Furthermore, in case that $\rk(\circmat{g}_{(:,(1:3))}) < 3$, it can be shown that $\rk(\circmat{g}) = \rk(\circmat{g}_{(:,(1:3))})$ due to the shift-invariant structure of circulant matrices. Since any circulant matrix is diagonalized by the \textit{Fourier matrix} \cite{davis1979circulant}, its rank is determined by the number of non-zero entries in the \emph{Discrete Fourier Transform} of its generating sequence. Specifically, $\rk(\circmat{g}) = 1$ if and only if $g$ is a constant sequence ($g \equiv a \in \mathds{R}$) and, thus, $\Conv{g}$ is trivially $\cvd{2}$. Similarly, for $T > 2$, $\rk(\circmat{g}) = 2$ is possible if and only if $T$ is even and $g$ is the alternating sequence $g(t) = \alpha_1 + \alpha_2(-1)^t$, $\alpha_1,\alpha_2 \in \mathds{R}$.  However, such a sequence is $2$-periodic, which contradicts our assumption that $T \geq 4$.

Notably, \cref{cor:cvd_2} reduces the verification complexity from $\mathcal{O}(T^5)$ (the cost of a symmetry-aware naive evaluation of $\circmat{g}_{[3]}$) to $\mathcal{O}(T)$. \cref{cor:cvd_2} can also be seen as a cyclic analog of \cref{cor:k_pos_consec_minors} in case of $k=3$. 
\subsubsection{Application to LTI Systems}\label{subsubsec:app_cvd}
We want to apply \cref{cor:cvd_2} to convolution operators $\Conv{g}$ that describe causal DTLTI systems with  transfer function $G(z)$ and minimal realization $(A,b,c,d)$. By \cref{eq:periodic_conv}, we need to apply our result to the periodic summation $g_T$ of the impulse response $g$ (see~\cref{eq:periodic_sum}). While one could directly use \cref{eq:ss_psum} to compute $g_T$, our goal is to express the minors in \cref{eq:cvd2_cons_minor,eq:cvd2_cons_minor_comp} as part of the impulse response of a so-called compound system and demonstrate that the rank condition in \cref{cor:cvd_2} is obsolete. 

We begin by noticing that $\cvd{2}(T)$ is invariant under constant time-shifts, i.e., it suffices to verify this property for $z^{-T} G(z)$. Therefore, it is enough to consider cases, where $G(z)$ has a relative degree of at least $T-1$. By \cref{eq:ss_psum} it holds then that $g_T(t) = c A^{T-1+t}\bar{b}$, $\bar{b} := (I_n - A^T)^{-1}b$, for $t \in (-T+1:T-1)$ and, thus,
\begin{equation*}
	\circmat{g_T}_{(:,(T-2:T))} = \begin{bmatrix}
		cA^{2}\bar{b} & c A \bar{b} & c \bar{b}\\
		cA^{3} \bar{b} & cA^{1} \bar{b} & cA \bar{b} \\
		\vdots & \vdots & \vdots \\
		cA^{T+1} \bar{b} & cA^{T} \bar{b} & cA^{T-1} \bar{b} 
	\end{bmatrix} =
	\mathcal{O}^{T}(A,c) (I_n - A^T)^{-1} \mathcal{C}^3(A,b)_{(:,(3:1))}.
\end{equation*}
This expression provides us with two important insights:
\begin{enumerate}[label=\roman*.)]
	\item $\rk(\circmat{g_T}_{(:,(1:3))}) = \rk(\circmat{g_T}_{(:,(T-2:T))}) = 3$, since $G(z)$ is of order larger than $T$. Thus, the rank assumption in \cref{cor:cvd_2} is fulfilled.
	\item The Hankel matrix ${H_{g_T}(1,T)}_{(:,(1:3))} \in \textnormal{(S)}\sgc{3}(T)$ results from reversing the column order in $\circmat{g_T}_{(:,(T-2:T))}$ and the minors in \cref{eq:cvd2_cons_minor} are the consecutive $3$-minors of $-{H_{g_T}(1,T)}_{(:,(1:3))}$. Hence, it suffices to consider $\det(H_{g_T}(t,3))$ for $t \in (1:T-2)$
\end{enumerate}
By \cref{eq:Cauchy_Binet}, $\det(H_{g_T}(t,3))$, $t \geq 1$, corresponds to the impulse response of the so-called \emph{compound system of order 3} of $(A,\bar{b},c)$ \cite{grussler2020variation}
\begin{equation}
	(\compound{A}{3},\compound{\mathcal{C}^3(A,\bar{b})}{3},\compound{\mathcal{O}^3(A,c)}{3}). \label{eq:compound_3rd}
\end{equation}
 In particular, if \cref{eq:compound_3rd} is (strictly) externally positive/negative (with $\det(H_{g_T}(1,3) \neq 0$), then all consecutive minors in \cref{eq:cvd2_cons_minor} share the same (strict) sign. External positivity of \cref{eq:compound_3rd} can be verified efficiently using certificates such as \cite{grussler2019tractable,taghavian2023external,farina1996existence}. A simple instance of systems $(A,\bar{b},c)$ with this property is the series interconnection of first order lags $\frac{k_i}{z-p_i}$ with $0 \leq p_i < 1$, $k_i > 0$ (see \cite{grussler2020variation}) (with strictness if the compound system \cref{eq:compound_3rd} has at least $3$ non-zero $p_i$)). Moreover, since by \cref{eq:g_T_ss}, the periodic summation of a first order lag $g_i(t) = k_i p_i^{t-1}$ derives as ${g_i}_T(t) = \frac{k_i}{1-p_i^T} p_i^{t-1}$ for $t \in (1:T)$, it follows from  \cref{lem:psum_compo} that if $(A,b,c)$ is the series interconnection of (at least three) first order lags (with non-zero $p_i$), then the same is true for $(A,\bar{b},c)$. 

Finally, under the assumption that $\tilde{g_T}(t) := \frac{g_T(t)}{g_T(t-1)}$ does not assume any value more than twice within a period, we are left by \cref{item:var_varc} with checking that
\begin{equation*}
	\compound{g_T}{2}(t) = \det \begin{bmatrix}
	c A^{t-1}\bar{b} & c A^{t-2}\bar{b}\\
	c A^{t+1}\bar{b} & c A^{t-1}\bar{b}
	\end{bmatrix} = -\compound{\begin{bmatrix}
	c\\
	cA
	\end{bmatrix}}{2} \compound{A}{2}^{t-2} \compound{\begin{bmatrix}
	\bar{b} & A\bar{b}
	\end{bmatrix}}{2}
\end{equation*}
does not change sign more than twice for $t \in (2:T+1)$ (and in the strict case, whenever $\compound{g_T}{2}(t)  = 0$, then $\sign(\compound{g_T}{2}(t-1)) = -\sign(\compound{g_T}{2}(t+1)) \neq 0$). As before, this property holds if it is shared by the impulse response of the \emph{compound system of order $2$} of $(A,\bar{b},c)$:
\begin{equation}
	(\compound{A}{2},\compound{\mathcal{C}^2(A,\bar{b})}{2},\compound{\mathcal{O}^2(A,c)}{2}), \label{eq:compound_2nd}
\end{equation}
which is true, e.g., if \cref{eq:compound_2nd} is (strictly) externally positive/negative (with $	\compound{g_T}{2}(0) \neq 0$). Similar to before, the latter is implied if $(A,b,c)$ is a series interconnection of first order lags $\frac{k_i}{z-p_i}$ with $0 \leq p_i < 1$, $k_i > 0$ (with strictness if $(A,b,c)$ is has at least two non-zero $p_i$). Since our assumption on $\tilde{g}_T$ are trivially fulfilled in these cases, we arrive at the following corollary:
\begin{cor} \label{cor:cvb_2_lags}
	If $G(z) = \prod_{i=1}^n \frac{k_i}{z-p_i}$ with $0 \leq p_i < 1$, $k_i > 0$, 
	then $\Conv{g} \in \textnormal{(S)}\cvb{2}(T)$ for any $T \geq 4$ (with strictness if at least three $p_i$'s are non-zero).
\end{cor}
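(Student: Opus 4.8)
\emph{Proof plan.} The claim is essentially the endpoint of the discussion preceding it, so the plan is to assemble that discussion into a clean argument. First I would normalise $G$: since $\cvd{2}(T)$ is shift‑invariant and a factor $z^{-m}$, $m\in\mathds{Z}_{\geq 0}$, only appends poles at the origin, I may peel off such a factor and assume every $p_i\neq 0$, and then, multiplying by $z^{-T}$, assume $G$ has relative degree at least $T-1$; by $T$‑periodicity the periodic summation $g_T$ is unchanged by this last step (only the order of the realisation grows past $T$). Working with the diagonal minimal realisation $A=\diag(p_1,\dots,p_n)$ (residues in $c$), I would combine the explicit first‑order‑lag periodic summation ${g_i}_T(t)=\tfrac{k_i}{1-p_i^{T}}p_i^{t-1}$, $t\in(1:T)$, from \cref{eq:g_T_ss} with \cref{lem:psum_compo} to identify $g_T$ on the relevant window with the impulse response of $\prod_i\tfrac{k_i/(1-p_i^{T})}{z-p_i}$ — again a cascade of first‑order lags with $0\le p_i<1$, positive gains, and the same set of nonzero poles. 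In particular $g_T(0:T-1)\in\mathds{R}^{T}_{>0}$, and, as explained in the preceding subsection, the rank hypothesis of \cref{cor:cvd_2} is obsolete here, so that theorem applies to $g_T$.

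Next I would import the key structural fact from \cite{grussler2020variation}: for $(\bar A,\bar b,\bar c):=(A,(I_n-A^{T})^{-1}b,c)$, itself a cascade of first‑order lags, the compound systems of order $2$ and $3$ in \cref{eq:compound_2nd} and \cref{eq:compound_3rd} are externally positive, and strictly so once $(\bar A,\bar b,\bar c)$ has at least two, respectively three, nonzero poles — which, since the rescaling $k_i\mapsto k_i/(1-p_i^{T})$ leaves the zero/nonzero pattern of the poles untouched, is equivalent to $G$ having at least two, respectively three, nonzero $p_i$. By the Cauchy–Binet formula \cref{eq:Cauchy_Binet} the impulse response of the order‑$3$ compound equals (up to sign) $\det H_{g_T}(t,3)$; external positivity therefore forces all $\det H_{g_T}(t,3)$, $t\ge 1$, to share one sign (strictly, and with $\det H_{g_T}(1,3)\neq 0$, in the strict case), and since the consecutive $3$‑minors of $\circmat{g_T}$ are these determinants up to the column reversal observed above, they all share the same (strict) sign — this is the minor condition in \cref{eq:cvd2_cons_minor}. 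Likewise $\compound{g_T}{2}$ equals, up to sign and a time shift, the impulse response of the order‑$2$ compound, so external positivity makes $\compound{g_T}{2}$ sign‑definite on a window of length $\ge T$, hence on all of $\mathds{Z}$ by periodicity, giving $\varic{\compound{g_T}{2}}=0\le 2$; and since $\Delta\tilde g_T(t)=-\compound{g_T}{2}(t)/\bigl(g_T(t)\,g_T(t-1)\bigr)$ with $g_T>0$, the same sign‑definiteness makes $\tilde g_T$ monotone on that window — strictly monotone, hence injective along a full period, in the strict case — which supplies both $\tilde g_T\in\textnormal{(S)}\pmu{T}$ and the requirement that $\tilde g_T$ assume no value more than twice per period.

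All hypotheses of \cref{cor:cvd_2} being in place for $g_T$, the equivalence of \cref{item:cvd2_conv} and \cref{item:cvd2_cons_minor} yields $\Conv{g}=\Conv{g_T}\in\cvd{2}(T)$, with strictness whenever $G$ has at least three nonzero poles (which also covers the two‑nonzero‑pole strictness needed above for $\tilde g_T\in\spmu{T}$). The degenerate residual cases — no nonzero poles, or one or two nonzero poles once the pure shift has been removed — are dispatched directly: a pure time shift is trivially $\cvd{2}(T)$, and a single, respectively double, first‑order lag yields a geometric, respectively two‑term, circulant for which $\cvd{2}(T)$ is checked by hand (the order‑$3$ compound then being degenerate, matching the non‑strict claim). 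I expect the only genuinely non‑routine input to be the compound external‑positivity statement for cascades of first‑order lags, quoted from \cite{grussler2020variation}; the rest is bookkeeping of the shift, the rescaled gains, and the reduction already carried out for \cref{cor:cvd_2}.
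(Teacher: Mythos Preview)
Your plan tracks the paper's discussion almost step for step: shift to high relative degree, express the consecutive $2$- and $3$-minors of $\circmat{g_T}$ as impulse responses of the order-$2$ and order-$3$ compound systems of $(A,\bar b,c)=(A,(I-A^{T})^{-1}b,c)$, import external positivity of those compounds for cascades of first-order lags from \cite{grussler2020variation}, and close with \cref{cor:cvd_2}. Your extra normalisation (stripping zero poles up front) and your explicit dispatch of the cases with at most two nonzero poles are sensible additions that the paper leaves implicit.

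One step to revisit: you conclude from external positivity of the order-$2$ compound that $\compound{g_T}{2}$ is sign-definite on a window of length $\ge T$, hence on all of $\mathds{Z}$ by periodicity, so that $\tilde g_T$ is (strictly) monotone over a full period and therefore injective. But for non-constant $g_T>0$ this cannot hold --- a $T$-periodic $\tilde g_T$ with $\Delta\tilde g_T$ of one sign throughout a full period is forced to be constant (the increments sum to zero), which in turn forces $g_T$ constant. So the order-$2$ compound of $(A,\bar b,c)$ cannot be externally positive/negative in the blunt sense you use here. The paper phrases this step more cautiously (it only asks that $\compound{g_T}{2}$ ``does not change sign more than twice'' and declares the ``not more than twice'' hypothesis on $\tilde g_T$ to be ``trivially fulfilled''), but its reasoning rests on exactly the same external-positivity premise, so this is a soft spot in the paper's derivation that your more explicit write-up happens to surface, not a divergence in method.
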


\subsection{Periodically Monotonicity Preserving Kernels}
\label{subsec:PMP_T}
In the following, we will derive our characterizations of convolution operators $\Conv{g} \in \textnormal{(S)}\pmp{T}$, $g \in \ell_{\infty}(T)$. We begin by noticing that if $\Conv{g} \in \textnormal{(S)}\cvb{2}(T)$, then $\Conv{g} \in \textnormal{(S)}\pmp{T}$: in fact, for any $\gamma \in \mathds{R}$ and $u \in \ell_{\infty}(T)$, it holds that $\Conv{g}u - \gamma \mathbf{1} = \Conv{g}(u - \beta \mathbf{1})$ for some $\beta \in \mathds{R}$. Thus, if $u \in \pmu{T}$, then $\varic{u - \beta \mathbf{1}} \leq 2$, which is why $ \varic{\Conv{g}u - \gamma \mathbf{1}} \leq 2$ (and $\varizc{\Conv{g}u - \gamma \mathbf{1}} \leq 2$ in the strict case) if $\Conv{g} \in \textnormal{(S)}\cvb{2}(T)$, i.e., $\Conv{g} \in \textnormal{(S)}\pmp{T}$. However, by the following equivalences, proven in \Cref{proof:lem:pmp_equiv}, it can be seen that $\cvb{2} \neq \pmp{T}$:
\begin{lem}\label{lem:pmp_equiv}
	For $g \in \ell_{\infty}(T)$, the following are equivalent:
\begin{lemenum}
		\item $\Conv{g} \in \textnormal{(S)}\pmp{T}$. \label{item:pmp_eq}
		\item For all $u \in \ell_\infty(T)$ with $\varic{\Delta u} = 2$, it holds that $\varic{\Conv{g} \Delta u} \leq 2$ (and $\varizc{\Conv{g} \Delta u} \leq 2$ in the strict case). \label{item:delta_pmp_eq}
		\item For all $u \in \ell_\infty(T)$ with $\sum_{i=0}^T u(i) = 0$ and $\varic{u} = 2$, it holds that $\Scm[\Conv{g}u] \leq 2$ (and $\Scp[\Conv{g}u] \leq 2$ in the strict case). \label{item:subspace_pmp_eq}
\end{lemenum}
\end{lem}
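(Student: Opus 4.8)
The plan is to prove the cyclic chain $\ref{item:pmp_eq} \Leftrightarrow \ref{item:delta_pmp_eq} \Leftrightarrow \ref{item:subspace_pmp_eq}$. For $\ref{item:pmp_eq} \Rightarrow \ref{item:delta_pmp_eq}$, I would observe that by \cref{lem:pmu_delta}, $u \in \pmu{T}$ is equivalent to $\varic{\Delta u} \leq 2$, and since the cyclic variation of a nonzero vector is always even (\cref{eq:even_var}), the only nontrivial case is $\varic{\Delta u} = 2$ (the case $\Delta u \equiv 0$ is excluded in \cref{def:pmp}, and $\varic{\Delta u} = 0$ is trivial since then $\Conv{g}\Delta u$ also has zero cyclic variation as $\Delta u$ would be constant, hence $\Delta u \equiv 0$). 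So statement \ref{item:pmp_eq}, applied to any $u$ with $\varic{\Delta u} \le 2$, yields that $\Conv{g}u \in \pmu{T}$, which by \cref{lem:pmu_delta} again means $\varic{\Delta \Conv{g}u} \le 2$; since convolution commutes with the forward difference (both are LTI / circulant operations), $\Delta \Conv{g}u = \Conv{g}\Delta u$, giving \ref{item:delta_pmp_eq}. The reverse implication $\ref{item:delta_pmp_eq} \Rightarrow \ref{item:pmp_eq}$ runs the same equivalences backwards: given $u \in \pmu{T}$ with $\Delta u \not\equiv 0$, we have $\varic{\Delta u} = 2$, so $\varic{\Conv{g}\Delta u} = \varic{\Delta \Conv{g}u} \le 2$, and \cref{lem:pmu_delta} gives $\Conv{g}u \in \pmu{T}$. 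The strict versions track identically using $\varizc{\cdot}$ and the strict clause of \cref{lem:pmu_delta}.

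For $\ref{item:delta_pmp_eq} \Leftrightarrow \ref{item:subspace_pmp_eq}$, the key point is to identify the range of the forward-difference operator on $\ell_\infty(T)$: a $T$-periodic sequence $w$ is of the form $w = \Delta u$ for some $u \in \ell_\infty(T)$ if and only if $\sum_{i=0}^{T-1} w(i) = 0$ (the telescoping sum over one period must vanish for $u$ to close up periodically, and conversely this zero-sum condition lets one define $u$ by partial summation up to an arbitrary constant). Thus the substitution $w = \Delta u$ is a bijection between $\{u \in \ell_\infty(T): \varic{\Delta u} = 2\}$ (modulo the additive constant, which $\Conv{g}$ absorbs up to a constant anyway) and $\{w \in \ell_\infty(T): \sum_i w(i) = 0,\ \varic{w} = 2\}$. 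Under this identification, the condition $\varic{\Conv{g}\Delta u} \le 2$ becomes exactly $\Scm[\Conv{g}w] \le 2$, i.e. $\varic{\Conv{g}w} \le 2$ — noting that $\Scm_c$ in \ref{item:subspace_pmp_eq} is the cyclic variation $\varic{\cdot}$ by the notational convention for $\ell_\infty(T)$, and similarly $\Scp_c = \varizc{\cdot}$ in the strict case.

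The main obstacle I anticipate is not any single step but getting the bookkeeping around \emph{additive constants and the $\sum w(i)=0$ constraint} exactly right — in particular making sure the reduction in \ref{item:subspace_pmp_eq} genuinely captures all of \ref{item:delta_pmp_eq}. Concretely, one must check that restricting attention to zero-mean inputs $w$ loses nothing: for a general $w$ with $\varic{w}=2$, $\Conv{g}w$ and $\Conv{g}(w - \bar w \mathbf 1)$ differ by a constant (since $\Conv{g}\mathbf 1 = (\sum_i g(i))\mathbf 1$), and subtracting a constant can only change the cyclic variation in a controlled way — but actually the clean route is to stay within the range of $\Delta$ throughout, i.e. only ever consider $w = \Delta u$, which is automatically zero-mean, so the equivalence with \ref{item:subspace_pmp_eq} is just the range characterization of $\Delta$ plus the observation that every zero-mean $w$ \emph{is} such a $\Delta u$. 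A secondary subtlety is the degenerate case distinction $\varic{\Delta u} \in \{0, 2\}$ and the exclusion $\Delta u \not\equiv 0$ from \cref{def:pmp}: one should remark that if $\varic{\Delta u} = 0$ then $\Delta u$ is (cyclically) sign-definite with zero mean over a period, forcing $\Delta u \equiv 0$, so this case is vacuous and the "$\varic{\Delta u} = 2$" hypotheses in \ref{item:delta_pmp_eq}–\ref{item:subspace_pmp_eq} lose no generality. I would handle all of this in one short paragraph after setting up the two bijections above.
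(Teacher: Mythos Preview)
Your proposal is correct and follows essentially the same approach as the paper's proof: the equivalence $\ref{item:pmp_eq} \Leftrightarrow \ref{item:delta_pmp_eq}$ via \cref{lem:pmu_delta} and the commutation $\Delta(\Conv{g}u) = \Conv{g}(\Delta u)$, and the equivalence $\ref{item:delta_pmp_eq} \Leftrightarrow \ref{item:subspace_pmp_eq}$ via the range characterization of $\Delta$ on $\ell_\infty(T)$ as the zero-mean sequences. Your treatment of the degenerate case $\varic{\Delta u}=0 \Rightarrow \Delta u \equiv 0$ is slightly more explicit than the paper's, but the argument is otherwise identical.
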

In other words, by \cref{item:subspace_pmp_eq}, $\pmp{T}$ does not require $\varic{\Conv{g}u} \leq 2$ for all $u \in \ell_\infty(T)$ with $\varic{u} \leq 2$ and as such implies viewer constraints than $\cvb{2}(T)$. A concrete example of $\Conv{g} \in \pmp{T}$ with $\Conv{g} \not \in \cvb{2}(T)$ is given in \Cref{sec:LTI_ex}. 

Next, we require to introduce the notion of \emph{convexity-preservation}, which by the subsequent lemma is equivalent to $\pmp{T}$ and, thus, allows us to utilize \cref{prop:convc_ct} for our investigations:
\begin{defn}[Convexity Preservation]
   For $g\in \ell_{\infty}(T)$, $\Conv{g}$ is called \emph{(strictly) convexity-preserving} ($\Conv{g} \in \textnormal{(S)}\cp{T}$) if $(\Conv{g} u_1, \Conv{g} u_2) \in \textnormal{(S)}\cc{T}$ for all $u \in \cc{T}$. 
\end{defn}

\begin{lem}\label{lem:PMP_T_CC}
   For $g \in \ell_{\infty}(T)$, $T \geq 4$, the following are equivalent:
\begin{lemenum}
	\item $\Conv{g} \in \textnormal{(S)}\pmp{T}$. \label{item:PMP_T}
	\item $\Conv{g} \in \textnormal{(S)}\cp{T}$.  \label{item:CC}
	\end{lemenum}
\end{lem}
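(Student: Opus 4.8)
The plan is to prove the two implications of the equivalence, handling the strict and non-strict versions in parallel (in the strict case, replace $\varic{\cdot}$, $\pmu{T}$, $\cc{T}$ on the output side by $\varizc{\cdot}$, $\spmu{T}$, $\scc{T}$). Only the defining properties are needed: by \cref{def:pm}, $v$ is periodically monotone iff $\varic{v-\beta\mathbf 1}\le 2$ for every $\beta\in\mathds R$; by \cref{defn:convc}, a curve $\gamma$ is a convex contour iff $\varic{\alpha_1\gamma_1+\alpha_2\gamma_2+\alpha_3\mathbf 1}\le 2$ for every $\alpha\in\mathds R^3\setminus\{0\}$ --- so every affine functional of a convex contour is periodically monotone; and $\Conv{g}(c\mathbf 1)=c\big(\sum_t g(t)\big)\mathbf 1$, with all four classes invariant under cyclic time-shifts.

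For the implication $\Conv{g}\in\pmp{T}\Rightarrow\Conv{g}\in\cp{T}$, I would fix $\gamma\in\cc{T}$ and $\alpha\in\mathds R^3\setminus\{0\}$, set $w:=\alpha_1\gamma_1+\alpha_2\gamma_2$, and use linearity to write
\begin{equation*}
	\alpha_1\Conv{g}\gamma_1+\alpha_2\Conv{g}\gamma_2+\alpha_3\mathbf 1=\Conv{g}w+\alpha_3\mathbf 1=\Conv{g}w-(-\alpha_3)\mathbf 1 .
\end{equation*}
Then I would split into cases: if $(\alpha_1,\alpha_2)=0$ the right-hand side is $\alpha_3\mathbf 1$ with cyclic variation $\le 0$; if $\Delta w\equiv 0$ it is a constant vector, again with cyclic variation $\le 0$; otherwise $w$ is an affine functional of $\gamma\in\cc{T}$ with $\Delta w\not\equiv 0$, hence periodically monotone, so $\Conv{g}\in\pmp{T}$ yields $\Conv{g}w\in\pmu{T}$ (resp.\ $\spmu{T}$), and \cref{def:pm} then bounds $\varic{\Conv{g}w-(-\alpha_3)\mathbf 1}\le 2$ (resp.\ $\varizc{\cdot}\le 2$). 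Since $\alpha$ is arbitrary this gives $\Conv{g}\gamma\in\cc{T}$ (resp.\ $\scc{T}$), i.e.\ $\Conv{g}\in\cp{T}$.

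The converse, $\Conv{g}\in\cp{T}\Rightarrow\Conv{g}\in\pmp{T}$, would hinge on a realization lemma: every periodically monotone $u$ with $\Delta u\not\equiv 0$ is the second coordinate of some $\gamma\in\cc{T}$. Granting it, I would take $\gamma=(\gamma_1,u)\in\cc{T}$ for such a $u$; then $\Conv{g}\in\cp{T}$ gives $(\Conv{g}\gamma_1,\Conv{g}u)\in\cc{T}$ (resp.\ $\scc{T}$), whose second coordinate $\Conv{g}u$ is therefore periodically monotone (resp.\ strictly so), which is exactly the conclusion \cref{def:pmp} asks for. To prove the lemma I would, by shift-invariance, assume $u(0)=\min_t u(t)$, so that by \cref{eq:unimod_perido} $u$ is non-decreasing on $(0:t_2)$ and non-increasing on $(t_2:T-1)$ for some peak index $t_2\ge 1$, and then set $\gamma_1(0)=\gamma_1(t_2)=0$, $\gamma_1(t)=-1$ on $(1:t_2-1)$, and $\gamma_1(t)=+1$ on $(t_2+1:T-1)$. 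The curve $(\gamma_1,u)$ traces the boundary of a convex polygon --- its ``left half'' lies in $\{x\le 0\}$ and runs monotonically upward, its ``right half'' lies in $\{x\ge 0\}$ and runs monotonically downward, and the two meet only at $(0,\min_t u)$ and $(0,\max_t u)$ --- so every affine functional evaluated along the vertices in boundary order is cyclically unimodal and hence changes sign at most twice, i.e.\ $(\gamma_1,u)\in\cc{T}$; equivalently, one may invoke \cref{item:prop_polygon}, noting that $\rk(M^\gamma)=3$ because the two vertices with $\gamma_1=0$ carry distinct $u$-values.

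The main obstacle is the realization lemma: both implications are otherwise just bookkeeping with \cref{def:pm}, \cref{defn:convc} and linearity of $\Conv{g}$, whereas the lemma requires a careful, uniform verification that the explicit polygon above is simple and convex --- in particular across the degenerate cases of ties among the values of $u$ and of $t_2\in\{1,T-1\}$, where one of the two monotone halves collapses to a vertical segment.
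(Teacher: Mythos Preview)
Your argument for $\pmp{T}\Rightarrow\cp{T}$ is essentially the paper's, just with the edge cases spelled out more carefully. The difference is in the converse direction.

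The paper first records the reformulation
\[
\gamma\in\textnormal{(S)}\cc{T}\quad\Longleftrightarrow\quad \alpha_1\gamma_1+\alpha_2\gamma_2\in\textnormal{(S)}\pmu{T}\ \text{for all }\alpha\in\mathds R^2,
\]
which is immediate from \cref{def:pm,defn:convc} (the constant $\alpha_3$ is absorbed into the level $\gamma$ of the $\pmu{T}$ test). With this in hand, your ``realization lemma'' becomes trivial: for any $u\in\pmu{T}$ the degenerate curve $(u,0)$ already lies in $\cc{T}$, since every linear combination $\alpha_1 u+\alpha_2\cdot 0=\alpha_1 u$ is periodically monotone. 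Applying $\textnormal{(S)}\cp{T}$ to $(u,0)$ and reading off the first coordinate gives $\Conv{g}u\in\textnormal{(S)}\pmu{T}$ directly. This bypasses the explicit polygon you build and all the attendant degenerate cases (ties in $u$, collapsed halves, the $\rk(M^\gamma)=3$ check) that you correctly flag as the main obstacle.

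Your construction is not wrong, and would eventually close; it just works harder than necessary because it insists on producing a full-rank contour. The paper's point is that the characterization of $\cc{T}$ via $\pmu{T}$ of all \emph{linear} combinations already makes the rank-deficient choice $(u,0)$ admissible, so no geometric construction is needed at all.
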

\begin{proof}
				We begin by noticing that $u \in \textnormal{(S)}\cc{T}$ if and only if $\alpha_1 u_1 + \alpha_2 u_2 \in \textnormal{(S)}\pmu{T}$ for all $\alpha \in \mathds{R}^2$. Assuming that $\Conv{g} \in \textnormal{(S)}\cp{T}$, it follows with $\alpha_1 = 1$ and $u_2 \equiv 0$ that $\Conv{g}  u_1 \in \textnormal{(S)}\pmu{T}$ for all $u_1 \in \pmu{T}$. Hence, $\Conv{g} \in \textnormal{(S)}\pmp{T}$. Conversely, if $\Conv{g} \in \textnormal{(S)}\pmp{T}$, then $\alpha_1 \Conv{g} u_1 + \alpha_2 \Conv{g} u_2 = \Conv{g} (\alpha_1 u_1 + \alpha_2 u_2) \in \textnormal{(S)}\pmu{T}$ for all $\alpha \in \mathds{R}^2$ and all $u \in \textnormal{(S)}\cc{T}$, which proves that $\Conv{g} \in \textnormal{(S)}\cc{T}$.
	\end{proof}
In our derivations of \cref{thm:pmp_main}, it suffices to characterize $\textnormal{S}\pmp{T}$, since the non-strict case follows from the following lemma, which is proven in \Cref{proof:lem:strict_pos_limit}:
\begin{lem}\label{lem:strict_pos_limit}
	Let $T \geq 4$ and $G_n(z) = \frac{z^3}{(z-\frac{1}{n+2})^3}$, $n \in \mathds{N}$, with impulse response $g_n$ and corresponding periodic summation. Then, for any $\gamma \in \cc{T}$ it holds that
	\begin{lemenum}
		\item $\lim_{n \to \infty} (\Conv{g_n} \gamma_1,\Conv{g_n} \gamma_2) = \gamma$ \label{item:cc_limit}
		\item $(\Conv{g_n} \gamma_1,\Conv{g_n} \gamma_2) \in \scc{T}$ if at least one determinant of $\gamma$ in \cref{eq:det_gamma} is non-zero. \label{item:cc_conv}
	\end{lemenum}
\end{lem}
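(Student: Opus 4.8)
The plan is to obtain i) from an explicit formula for the impulse response and ii) by feeding \cref{cor:cvb_2_lags} into the Cauchy--Binet formula \cref{eq:Cauchy_Binet}. For i), set $p_n:=1/(n+2)\in(0,1)$, so that $G_n(z)=z^3/(z-p_n)^3$ has impulse response $g_n(t)=\binom{t+2}{2}p_n^{t}$ for $t\ge 0$ and $g_n(t)=0$ otherwise; in particular $g_n\to\delta$ pointwise as $n\to\infty$. Since for $p_n\le\frac12$ the series $\sum_{k\ge 0}\binom{t+kT+2}{2}p_n^{t+kT}$ is dominated uniformly in $n$, the periodic summation obeys ${g_n}_T(t)\to\delta(t)$ for every $t\in(0:T-1)$, whence $\circmat{{g_n}_T}\to I_T$ entrywise. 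By \cref{eq:cyc_mat_u_y} this gives $[\Conv{g_n}\gamma_i](0:T-1)=\circmat{{g_n}_T}\,\gamma_i(0:T-1)\to\gamma_i(0:T-1)$, i.e.\ $(\Conv{g_n}\gamma_1,\Conv{g_n}\gamma_2)\to\gamma$, which is i).

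For ii), put $\hat\gamma:=(\Conv{g_n}\gamma_1,\Conv{g_n}\gamma_2)$ and let $M^{\hat\gamma}$ be the matrix \cref{eq:M_mat} built from $\hat\gamma$. Since $\circmat{{g_n}_T}\mathbf{1}_T=G_n(1)\mathbf{1}_T$ with $G_n(1)=(1-p_n)^{-3}>0$, while the first two columns of $M^{\hat\gamma}$ are $\circmat{{g_n}_T}\gamma_1(0:T-1)$ and $\circmat{{g_n}_T}\gamma_2(0:T-1)$ by \cref{eq:cyc_mat_u_y}, we get $M^{\hat\gamma}=\circmat{{g_n}_T}\,M^\gamma\,\diag(1,1,1/G_n(1))$ and therefore, by \cref{eq:Cauchy_Binet},
\[
	\compound{M^{\hat\gamma}}{3}=\frac{1}{G_n(1)}\,\compound{\circmat{{g_n}_T}}{3}\,\compound{M^\gamma}{3}.
\]
I would then show $\circmat{{g_n}_T}\in\ssgc 3$: up to the shift $z^{-3}$, to which cyclic variation (hence $\scvb 2(T)$) is insensitive, $G_n$ is the series interconnection of three first-order lags $1/(z-p_n)$ with the non-zero pole $p_n$, so \cref{cor:cvb_2_lags} gives $\Conv{g_n}\in\scvb 2(T)$, i.e.\ $\circmat{{g_n}_T}\in\scvb 2$, and then $\circmat{{g_n}_T}\in\ssgc 3$ by \cref{item:SCVB_SSC} (applicable since $2\cdot 1+2=4\le T$). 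Consequently every entry of the square matrix $\compound{\circmat{{g_n}_T}}{3}$ is nonzero and of one common sign $\epsilon\in\{+,-\}$.

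To conclude ii), note that $\gamma\in\cc T$ together with the assumption that some determinant in \cref{eq:det_gamma} is nonzero forces $\rk(M^\gamma)=3$, so by \cref{prop:convc_ct} the entries of $\compound{M^\gamma}{3}$ — which are precisely those determinants — share a common weak sign and are not all zero. Multiplying the square matrix $\compound{\circmat{{g_n}_T}}{3}$ (all entries nonzero, sign $\epsilon$) by this vector and dividing by $G_n(1)>0$ renders every entry of $\compound{M^{\hat\gamma}}{3}$ nonzero and of one common sign; in particular $\compound{M^{\hat\gamma}}{3}\ne 0$, so $\rk(M^{\hat\gamma})=3$. Applying \cref{prop:convc_ct} once more to $\hat\gamma$ — whose \cref{eq:det_gamma}-determinants now all share one \emph{strict} sign — yields $\hat\gamma\in\scc T$, as claimed.

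The one genuinely delicate point is the reduction of $G_n$ to the exact product form required by \cref{cor:cvb_2_lags} by peeling off the $z^3$ factor, i.e.\ verifying that $\scvb 2(T)$ — equivalently, $\ssgc 3$ of the associated cyclic matrix — is invariant under cyclic time shifts; once this is granted, the remainder is routine Cauchy--Binet and sign propagation. As an alternative that avoids the shift argument, $\circmat{{g_n}_T}\in\ssgc 3$ may instead be obtained directly from \cref{cor:cvd_2} applied to ${g_n}_T$, using that the order-$2$ and order-$3$ compound systems \cref{eq:compound_2nd,eq:compound_3rd} of the lag cascade are externally positive.
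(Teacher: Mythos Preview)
Your proof is correct and follows essentially the same route as the paper: both establish $\circmat{{g_n}_T}\in\ssgc{3}$ from \cref{cor:cvb_2_lags} together with shift-invariance, and then obtain \cref{item:cc_conv} by applying Cauchy--Binet to the product $\circmat{{g_n}_T}M^\gamma$ and invoking \cref{prop:convc_ct}. Your treatment is in fact somewhat more explicit than the paper's --- you spell out the diagonal factor $\diag(1,1,1/G_n(1))$, make the limit ${g_n}_T\to\delta$ quantitative, and go straight from $\scvb{2}(T)$ to $\ssgc{3}$ via \cref{item:SCVB_SSC} --- whereas the paper additionally routes through $\textnormal{S}\pmp{T}$ and $\textnormal{S}\cp{T}$ via \cref{lem:PMP_T_CC}, a detour that is not actually needed for either conclusion.
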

We are ready, now, to state our desired tractable characterization of $\textnormal{(S)}\pmp{T}$. 
\begin{thm}\label{thm:pmp_main}
   For $g \in \ell_{\infty}(T)$, $T \geq 4$, the following are equivalent:
    \begin{thmenum}
    	        \item $\Conv{g} \in \textnormal{(S)}\pmp{T}$. \label{item:pmp_theorem}
    	\item $(\Delta g, g) \in \textnormal{(S)}\cc{T}$ with positive orientation.\label{item:PM_g_dg}
    	\item $\Delta g \in \textnormal{(S)PM}(T)$ and $(\Delta g(t))^2 \geq (>) \Delta g(t+1)\Delta g(t-1)$ for all $t \in (0:T-1)$. \label{item:PM_log}      
    \end{thmenum}
\end{thm}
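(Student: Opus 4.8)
The plan is to prove the chain by establishing (i)$\Leftrightarrow$(ii) and (ii)$\Leftrightarrow$(iii) separately, working throughout with the \emph{phase curve} $\gamma:=(\Delta g,g)$ (so $\gamma_1=\Delta g$, $\gamma_2=g$) and exploiting that $\Delta$ commutes with cyclic convolution, so that $\Conv{g}\delta=g$ and $\Conv{g}\Delta\delta=\Delta g$ for the $T$‑periodic unit pulse $\delta$. As a preliminary reduction I would reduce to the strict case: from the non‑strict hypotheses one convolves $g$ with the strictly externally positive kernels $g_n$ of \Cref{lem:strict_pos_limit}, applies the strict statement to $g\ast g_n$, and passes to the limit, using that $\pmu{T}$ is closed under pointwise limits (\Cref{lem:lim_var}) and $\Conv{g_n}\gamma\to\gamma$ (\Cref{lem:strict_pos_limit}). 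The degenerate case $\rk(M^\gamma)\le 2$, in which $\Delta g$ and $g$ are affinely dependent and hence $g$ is essentially constant, is dispatched directly.

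For (ii)$\Leftrightarrow$(iii) I would merely specialize \cref{item:prop_PM} to $\gamma$. Since $\Delta\gamma_1=\Delta^2 g$ and $\Delta\gamma_2=\Delta g$, a one‑line computation gives
\[
\det\begin{bmatrix}\Delta^2 g(t) & \Delta g(t)\\[2pt] \Delta^2 g(t+1) & \Delta g(t+1)\end{bmatrix}=\Delta g(t+1)^2-\Delta g(t)\,\Delta g(t+2),
\]
which after the shift $t\mapsto t-1$ is exactly the quantity in (iii); positive orientation fixes its sign as nonnegative (strictly positive in the strict case). By \Cref{rem:tilde_gamma} the tilde in \cref{item:prop_PM} may be dropped since $\gamma_1=\Delta g\in\spmu{T}$, and the unimodality requirement of \cref{item:prop_PM} then reads precisely $\Delta g\in\spmu{T}$, i.e.\ the first ingredient of (iii).

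For (i)$\Rightarrow$(ii) I would test $\Conv{g}$ against elementary inputs. For every $(\alpha_1,\alpha_2)\neq 0$ the sequence $w:=\alpha_1\Delta\delta+\alpha_2\delta$ agrees with a constant sequence except at two cyclically adjacent entries, hence lies in $\pmu{T}$ — it cannot cross any horizontal level more than twice, i.e.\ $\varic{w-c\mathbf 1}\le 2$ for all $c$, since among a run of equal entries and two exceptional adjacent ones at most three sign changes occur, which collapses to two as cyclic variations are even — and $\Delta w\not\equiv 0$. Thus $\Conv{g}\in\textnormal{S}\pmp{T}$ forces $\Conv{g}w=\alpha_1\Delta g+\alpha_2 g\in\spmu{T}$, i.e.\ $\varizc{\alpha_1\Delta g+\alpha_2 g-c\mathbf 1}\le 2$ for every $c$; letting $\alpha$ range over $\mathds{R}^3$ this is exactly $\gamma\in\scc{T}$. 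The orientation is necessarily positive: on the cyclic arc where $\Delta g>0$ the height $g$ strictly increases and on the arc where $\Delta g<0$ it strictly decreases, so the convex polygon $\mathcal P_\gamma$ is traced counterclockwise, which by \Cref{rem:tilde_gamma} is positive orientation.

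The converse (ii)$\Rightarrow$(i) is the main obstacle. By \Cref{lem:PMP_T_CC} it suffices to prove $\Conv{g}\in\textnormal{S}\cp{T}$, i.e.\ $(\Conv{g}u_1,\Conv{g}u_2)\in\scc{T}$ for every $u\in\cc{T}$; the instance $u=(\Delta\delta,\delta)$ returns exactly hypothesis (ii), so the task is to bootstrap this single curve to all of $\cc{T}$. By \cref{item:prop_minor} this reduces to showing that the $3\times3$ determinants with rows $(\Conv{g}u_1(t_i),\Conv{g}u_2(t_i),1)$, $i=1,2,3$, share one sign; writing $\Conv{g}u_k(t)=\sum_\tau g(t-\tau)u_k(\tau)$, absorbing the all‑ones column via $1=\tfrac1s\sum_\tau g(t-\tau)$ with $s:=\sum_k g(k)\neq 0$ (the case $s=0$ being reached by the perturbation of the first paragraph), and applying Cauchy--Binet (\cref{eq:Cauchy_Binet}), one writes this determinant as $\tfrac1s\sum_{\mathcal J}\det\big((\circmat{g})_{(\{t_1,t_2,t_3\},\mathcal J)}\big)\det\big(M^u_{(\mathcal J,:)}\big)$, the $M^u$‑minors being single‑signed since $u\in\cc{T}$. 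The hard part is that these $M^u$‑minors are not arbitrary — they form the third compound of a matrix one of whose columns is $\mathbf 1$ — so what is forced on $\circmat{g}$ is not full $3$‑sign consistency (which would be the strictly stronger $\cvd{2}(T)$) but precisely (ii); equivalently, by \cref{item:subspace_pmp_eq}, $\circmat{g}$ must be strictly cyclic $2$‑variation bounding on the zero‑sum hyperplane, and the substitution $\circmat{g}\Delta=\circmat{\Delta g}$ turns this into a condition on $\Delta g$ alone which is then supplied by the two ingredients of (iii) through the consecutive‑minor reduction already established for $\cvd{2}(T)$ in \Cref{cor:cvd_2} together with the multiplicativity \Cref{lem:pmu_mult}.
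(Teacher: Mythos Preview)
Your handling of (ii)$\Leftrightarrow$(iii) and (i)$\Rightarrow$(ii) is essentially sound and close to the paper's; the test inputs $w=\alpha_1\Delta\delta+\alpha_2\delta$ are a legitimate and slightly more direct alternative to the paper's choice of the curve $u=(\delta,\delta(\cdot-1))$, and your orientation reasoning agrees with the computation in \cref{eq:dg_log}. (One small omission: in (iii)$\Rightarrow$(ii) you must also argue that $\gamma_2=g\in\spmu T$, which follows because $\Delta g\in\spmu T$ forces in particular $\varizc{\Delta g}\le 2$, i.e.\ $g\in\spmu T$.)

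The genuine gap is (ii)$\Rightarrow$(i). Your Cauchy--Binet expansion is correct but does not close: the minors $\det\big((\circmat{g})_{(\{t_1,t_2,t_3\},\mathcal J)}\big)$ need not share a sign---that would be $\circmat g\in\sgc 3$, i.e.\ $\cvd 2(T)$, which is strictly stronger than $\pmp T$---so the sum has no reason to be single-signed. Your proposed rescue via $\circmat g\circmat{\Delta\delta}=\circmat{\Delta g}$ and \Cref{cor:cvd_2} fails structurally: that theorem requires the generating kernel to lie in $\mathds R^T_{>0}\cup\mathds R^T_{<0}$, whereas $\Delta g$ sums to zero and hence changes sign; moreover (iii) controls only the consecutive $2{\times}2$ Toeplitz minors $(\Delta g(t))^2-\Delta g(t{-}1)\Delta g(t{+}1)$, not the $3{\times}3$ minors that \Cref{cor:cvd_2} would require, and \Cref{lem:pmu_mult} does not bridge this gap since the ratio $\Delta g(t)/\Delta g(t{-}1)$ is undefined at the sign changes. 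The paper's argument for this direction is of a completely different nature. Writing $\Delta y=\Conv g\Delta u$ and splitting $\Delta u$ into its nonnegative and nonpositive runs, it expresses $\Delta y(k{:}k{+}2)=v_1-v_2$ and Laplace-expands the vanishing determinant $\det\begin{bmatrix}\Delta y(k{:}k{+}2)&v_1&v_2\end{bmatrix}$ to obtain a three-term identity $\alpha_k\,\Delta y(k)-\alpha_{k+1}\,\Delta y(k{+}1)+\alpha_{k+2}\,\Delta y(k{+}2)=0$ in which the outer coefficients $\alpha_k,\alpha_{k+2}$ are positive combinations of $3$-minors of $M^{(\Delta g,g)}$ and hence strictly positive by (ii). This forces the local rigidity: $\Delta y(k{+}1)=0\Rightarrow\Delta y(k)\Delta y(k{+}2)<0$ (or $\Delta y\equiv0$, excluded by a separate substitution). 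A homotopy $t\mapsto(1{-}t)\Delta g+t\Delta y$ from $\Delta g$ (which has $\varizc{}=2$) to $\Delta y$ then rules out $\varic{\Delta y}>2$, since the first crossing of the threshold would create a zero without a neighbouring sign flip, contradicting the local identity. This determinantal three-term mechanism is the missing idea; it is not recoverable from the compound-matrix route you set up.
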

A proof of \cref{thm:pmp_main} is given in \Cref{proof:thm:pmp_main}.
\subsubsection{Application to LTI Systems}
Finally, we want to discuss how \cref{thm:pmp_main} can be applied to convolution operators $\Conv{g}$ that correspond to causal finite-dimensional LTI systems. As in \Cref{subsubsec:app_cvd}, it suffices to consider cases where $G(z)$ has a relative degree of at least $T-1$ with minimal realization $(A,b,c)$. By \cref{eq:ss_psum} it holds then that $\Delta g_T(t) = c A^{T-1+t}\bar{b}$, $\bar{b} := (A-I_n)(I_n - A^T)^{-1}b$, for $t \in (-T+1:T-2)$, which by \cref{eq:Cauchy_Binet} yields that
\begin{align*}
\compound{\Delta g_T}{2}(t) &:=	\det\begin{bmatrix}
			\Delta g_T(t) & \Delta g_T(t-1)\\
			\Delta g_T(t+1) & \Delta g_T(t)
	\end{bmatrix} 
	= -\det \begin{bmatrix}
	 c A^{t-2}\bar{b} & c A^{t-1}\bar{b}\\
	  c A^{t-1}\bar{b} &  c A^{t}\bar{b}
	\end{bmatrix} \\ &= -\compound{\mathcal{O}^2(A,c)}{2} \compound{A}{2}^{t-2} \compound{\mathcal{C}^2(A,\bar{b})}{2}, \; t \in (2:T+1).
\end{align*}
Hence, if $(\compound{A}{2},\compound{\mathcal{C}^2(A,\bar{b})}{2},\compound{\mathcal{O}^2(A,c)}{2})$ -- the compound system of order $2$ to $(A,\bar{b},c)$ -- is (strictly) externally negative (with $\compound{\Delta g_T}{2}(2) > 0$), then the inequality requirement in \cref{item:PM_log} is fulfilled. By analogous arguments as in \Cref{subsubsec:app_cvd}, we can conclude that this is the case if $G(z)$ is the series interconnection of first order lags $\frac{k_i}{z-p_i}$, $k_i > 0$, $p_i \geq 0$ (with at least two non-zero $p_i$ in the strict case). Note that this does not just confirm our previous discussion on $\textnormal{(S)}\cvb{2}(T)$ implying $\textnormal{(S)}\pmp{T}$, but also demonstrates that $\textnormal{S}\pmp{T}$ only requires two of these first order lags vs. three for $\textnormal{S}\cvd{2}$. Using \cref{item:PM_g_dg}, this can also be verified graphically as shown in  \Cref{fig:second_third_order} for 
\begin{equation}
	G_2(z) = \frac{1}{(z-0.7)(z-0.8)} \label{eq:second_order}.
\end{equation}
and
\begin{equation}
	G_3(z) = \frac{1}{(z-0.6)(z-0.7)(z-0.8)} \label{eq:third_order}.
\end{equation}
In particular, since no three points are co-linear for each contour, we can conclude that both systems are, indeed, $\textnormal{S}\pmp{T}$ for all $T$. 
\begin{figure}
	\centering
	\begin{tikzpicture}
		\begin{groupplot}[
			group style={
				group size=2 by 1,
				horizontal sep=2cm,
				horizontal sep=40 pt
			},
			width=0.52 \textwidth,
			height=5.75cm,
			xlabel={$\Delta g_T$},
			ylabel={$g_T$},
			]
			
			\nextgroupplot
			\addplot[dotted, mark=*, mark size=1.5 pt, line width=1 pt,
			blue] file{convex_contour_second_order_T_5.txt}; 
			\label{line:T_5_1}
			\addplot[draw = none, line width= 1 pt,
			blue, postaction={
				decorate,
				decoration={
					markings,
					mark=between positions 0.1 and 0.9 step 0.2 with {
						\arrow{stealth} %
					}
				}
			}, postaction={
				decorate,
				decoration={
					markings,
					mark=at position 0 with {
						\node[below] {$t=0$};
					}
				}
			}] file{convex_contour_second_order_T_5.txt};
			
			\addplot[mark=*, mark size=1.5 pt, line width=1 pt,
			red] file{convex_contour_second_order_T_10.txt}; 
			\label{line:T_10_1}
			\addplot[draw = none, line width= 1 pt,
			red, postaction={
				decorate,
				decoration={
					markings,
					mark=between positions 0.15 and 0.9 step 0.2 with {
						\arrow{stealth} %
					}
				}
			}, postaction={
				decorate,
				decoration={
					markings,
					mark=at position 0 with {
						\node[left] {$t=0$};
					}
				}
			}] file{convex_contour_second_order_T_10.txt};
			
			\addplot[mark=*, mark size=1.5 pt, line width=1 pt,
			color = FigColor3,dashed] file{convex_contour_second_order_T_100.txt}; 
			\label{line:T_100_1};
			\addplot[postaction={
				decorate,
				decoration={
					markings,
					mark=between positions 0.15 and 0.9 step 0.2 with {
						\arrow{stealth} %
					}
				}
			}, postaction={
				decorate,
				decoration={
					markings,
					mark=at position 0 with {
						\node[below] {$t=0$};
					}
				}
			},draw=none,color = FigColor3] table{convex_contour_second_order_T_100_short.txt};
			
			\nextgroupplot
			\addplot[dotted, mark=*, mark size=1.5 pt, line width=1 pt,
			blue] file{convex_contour_third_order_T_5.txt}; 
			\label{line:T_5_2}
			\addplot[draw = none, line width= 1 pt,
			blue, postaction={
				decorate,
				decoration={
					markings,
					mark=between positions 0.1 and 0.9 step 0.2 with {
						\arrow{stealth} %
					}
				}
			}, postaction={
				decorate,
				decoration={
					markings,
					mark=at position 0 with {
						\node[above] {$t=0$};
					}
				}
			}] file{convex_contour_third_order_T_5.txt};
			
			\addplot[mark=*, mark size=1.5 pt, line width=1 pt,
			red] file{convex_contour_third_order_T_10.txt}; 
			\label{line:T_10_2}
			\addplot[draw = none, line width= 1 pt,
			red, postaction={
				decorate,
				decoration={
					markings,
					mark=between positions 0.15 and 0.9 step 0.2 with {
						\arrow{stealth} %
					}
				}
			}, postaction={
				decorate,
				decoration={
					markings,
					mark=at position 0 with {
						\node[right] {$t=0$};
					}
				}
			}] file{convex_contour_third_order_T_10.txt};
			
			\addplot[mark=*, mark size=1.5 pt, line width=1 pt,
			color = FigColor3,dashed] file{convex_contour_third_order_T_100.txt}; 
			\label{line:T_100_2};
			\addplot[postaction={
				decorate,
				decoration={
					markings,
					mark=between positions 0.15 and 0.9 step 0.2 with {
						\arrow{stealth} %
					}
				}
			}, postaction={
				decorate,
				decoration={
					markings,
					mark=at position 0 with {
						\node[below] {$t=0$};
					}
				}
			},draw=none,color = FigColor3] table{convex_contour_third_order_T_100_short.txt};
		\end{groupplot}
	\end{tikzpicture}
	\caption{Visualization of the curve $(\Delta g_T,g_T)$ belonging to $G_2(s)$ in \cref{eq:second_order} (left) and $G_3(s)$ in \cref{eq:third_order} (right) for \ref{line:T_5_1} $T=5$, \ref{line:T_10_1} $T=10$ and \ref{line:T_100_1} $T=100$. All curves are strictly convex contours, which by \cref{thm:pmp_main} implies that $\Conv{g} \in \textnormal{S}\pmp{T}$. \label{fig:second_third_order}}
\end{figure}

\section{Examples -- Relay Feedback Systems} \label{sec:LTI_ex}
In this section, we will further illustrate the application of our results by studying how $\pmp{T}$ affects the existence of self-sustained oscillations in relay feedback systems, i.e., a Lur'e feeback system (see~ \Cref{fig:lure}), where $\psi(\cdot) = \sign (\cdot)$ and self-oscillations are characterized as fixed-points of the loop gain 
\begin{equation}
	u \mapsto -\Conv{g}\sign(u). \label{eq:fixed_point_relay} 
\end{equation}

\subsection{Third Order System -- $\pmp{T}$ for all $T$}
We begin with the case of $G$ being the system in \cref{eq:third_order}. Since we have already verified that $\Conv{g}\in \pmp{T}$ for all $T$ and the same is true for any monotonic static non-linearity, we can conclude that also the open loop gain is $\pmp{T}$ independently of $T$. Thus, the open loop gain leaves the set of periodically monotone signals invariant, which is why one may speculate that \cref{eq:fixed_point_relay} has periodically monotone fixed-points. This is, indeed, the case as visualized in \Cref{fig:third_order_relay}. Note that this fixed-point can be computed and verified via the equation
\begin{equation*}
	u^\ast(0:15) = -\circmat{g_{16}} \begin{bmatrix}
		\phantom{-}	\mathbf{1}_8\\
		-\mathbf{1}_8
	\end{bmatrix}.
\end{equation*}
\begin{figure}
	\centering
	\begin{tikzpicture}
		\begin{axis}[xmin = 0, xmax = 100, grid=both, xlabel = $t$, ylabel= $u^\ast (t)$, width = 0.95\textwidth, height = 4 cm]
			\addplot[ycomb, thin, mark=square*,mark options={fill=red,draw=black},mark size=2pt] file{simluation_third_order_pm_transient.txt};
		\end{axis}
	\end{tikzpicture}
	\caption{Output simulation for relay feedback systems with $G = G_3$ as in \cref{eq:third_order}. After a transient period, the system converges to a periodically monotone self-oscillation of period $T=16$, i.e., a fixed-point to \cref{eq:fixed_point_relay}. \label{fig:third_order_relay}}
\end{figure}

\subsection{Fourth Order System -- $\pmp{T}$ for $T \leq 4$}
In our next example, we use the following fourth order system 
\begin{equation}
	G(z) = \frac{(z-1)(z-3)}{(z-4)^4}. \label{eq:fourth_order}
\end{equation}
Graphically (see~\Cref{fig:fourth_order_cc}), we can verify that $\Conv{g} \in \pmp{T}$ only for $T \in (2:4)$, which is why this time one may only expect that there exists a $\pmu{T}$ fixed-point of the loop-gain for $T \in (2:4)$. By computing

 \begin{figure}
	\centering
	\begin{tikzpicture}
		\begin{axis}[xlabel={$\Delta g_T$},ylabel={$g_T$},width=0.95 \textwidth,height =  5.5cm]
			\addplot[blue, mark = *,mark size=1.5 pt, line width=1 pt]file{convex_contour_fourth_order_T_4.txt}; \label{line:T_4_4}
				\addplot[draw = none, line width= 1 pt,
			blue, postaction={
				decorate,
				decoration={
					markings,
					mark=between positions 0.1 and 0.9 step 0.2 with {
						\arrow{stealth} %
					}
				}
			}, postaction={
				decorate,
				decoration={
					markings,
					mark=at position 0 with {
						\node[right] {$t=0$};
					}
				}
			}] file{convex_contour_fourth_order_T_4.txt};

			\addplot[red,dotted,mark = *,mark size=1.5 pt, line width=1 pt] file{convex_contour_fourth_order_T_5.txt}; \label{line:T_4_5}
						\addplot[draw = none, line width= 1 pt,
			red, postaction={
				decorate,
				decoration={
					markings,
					mark=between positions 0.1 and 0.9 step 0.2 with {
						\arrow{stealth} %
					}
				}
			}, postaction={
				decorate,
				decoration={
					markings,
					mark=at position 0 with {
						\node[left] {$t=0$};
					}
				}
			}] file{convex_contour_fourth_order_T_5.txt};

			\addplot[dashed,color= FigColor3,mark = *,mark size=1.5 pt, line width=1 pt] file{convex_contour_fourth_order_T_10.txt}; \label{line:T_4_10}
									\addplot[draw = none, line width= 1 pt,
			color = FigColor3, postaction={
				decorate,
				decoration={
					markings,
					mark=between positions 0.1 and 0.9 step 0.2 with {
						\arrow{stealth} %
					}
				}
			}, postaction={
				decorate,
				decoration={
					markings,
					mark=at position 0 with {
						\node[left] {$t=0$};
					}
				}
			}] file{convex_contour_fourth_order_T_10.txt};

		\end{axis}
	\end{tikzpicture}
	\caption{Visualization of the curve $(\Delta g_T,g_T)$ belonging to $G$ in \cref{eq:fourth_order} with \ref{line:T_4_4} $T=4$, \ref{line:T_4_5} $T=5$ and \ref{line:T_4_10} $T=10$. Since only the curve corresponding to $T=4$ is a convex contour, it follows by \cref{thm:pmp_main} that $\Conv{g} \in \pmp{T}$ exclusively for $T = 4$. \label{fig:fourth_order_cc}
	}
\end{figure}

\begin{equation*}
	u^\ast(0:4) = -\circmat{g_{4}} \begin{bmatrix}
	\phantom{-}	\mathbf{1}_2\\
		-\mathbf{1}_2
	\end{bmatrix} = \begin{bmatrix}
		\phantom{-}	4.5261 \\ \phantom{-} 1.2672\\ -4.5261 \\ -1.2672
	\end{bmatrix},
\end{equation*}
we can see that $u^\ast \in \pmu{4}$ corresponds to such a self-oscillation. Thus, revealing the significance of the $\pmp{T}$ property for the analysis of self-oscillations in relay feedback systems. Further, note that as
\begin{equation*}
	\compound{{{\circmat{g_{4}}}_{(:,(1:3))}}}{3} = 5.7506 \begin{bmatrix}
		1 & -1 & 1 &-1
	\end{bmatrix}^\transp,
\end{equation*}
$\Conv{g} \not \in \cvb{2}$. This confirms our analysis in \cref{lem:pmp_equiv} and shows the practical need for the less restrictive notion of $\pmp{T}$.  

\section{Conclusion}\label{sec:conc}
In this work, we have provided tractable characterizations for two nested classes of cyclic variation bounding discrete-time LTI systems: periodic monotonicity preservation (PMP) and cyclic $2$-variation diminishing. Central to our derivations is a bridge between algebraic total positivity theory \cite{karlin1968total} and its geometric interpretations, which has led to a tractable characterization of sequentially convex contours. 

Our results are intended to facilitate the development of new fixed-point theorems that will support the study of self-sustained oscillations in Lur'e feedback systems. The presented examples on relay feedback systems provide a first indication of how the PMP property is important for the existence of such oscillations by limiting the potential set of oscillations and periods. A similar supporting observation has been made recently in \cite{chaffey2024amplitude,tong2025selfsustained}. %
In the future, we would also like to extend these results to signals with more sign changes period and complete our preliminary studies in \cite{tong2025selfsustained} with general fixed-point theorems.

\bibliographystyle{plain}
\bibliography{refkpos,refopt,refpos,science}
\appendix \label{sec:appendix}
\crefalias{Section}{Appendix}
\crefalias{section}{appendix}
\section{Proof to \cref{lem:pmu_mult}}
\label{proof:lem:pmu_mult}
	Let $u \in \spmu{T}$, where after a possible time-shift, it can be assumed that $$u(0) < u(1) < \dots < u(t^\ast) \geq u(t^\ast +1) > u(t^\ast +2) > \dots > u(T-1) > u(0).$$ Then, $\tilde{u} \in \spmu{T}$, since
	\begin{align*}
		u(0)u(1)  < \dots < u(t^\ast - 1)u(t^\ast)\text{ and }
		 u(t^\ast) u(t^\ast+1)>   \dots > u(T-1)u(0)
	\end{align*}
	with the only equality possible at $u(t^\ast - 1)u(t^\ast) = u(t^\ast) u(t^\ast+1)$. The non-strict case follows analogously. 
\section{Proof to \cref{lem:convex}}
\label{proof:lem:convex}
	The equivalence between \cref{item:cvx_f,item:bar_f} follows as discussed prior to \cref{lem:convex} by \cite[Proposition~1.1.4 \& 5.3.1]{hiriart2013convex}. The additional requirements in the strict case are readily verified by noticing that three points $(t_{i_1},f(t_{i_1}))$, $(t_{i_2},f(t_{i_2}))$, $(t_{i_3},f(t_{i_3}))$ are co-linear if and only if $s(t_{i_1},t_{i_2}) = s(t_{i_2},t_{i_3})$. Finally, to see the equivalence with \cref{item:cvx_det,item:cvx_comp}, it suffices to note that the elements in $M_{[3]}^f$ compute as
	\begin{align*}
		\det \begin{bmatrix}
			t_{i_1} & f(t_{i_1}) & 1\\
			t_{i_2} & f(t_{i_2}) & 1\\
			t_{i_3} & f(t_{i_3}) & 1
		\end{bmatrix} 
		&= \det \begin{bmatrix}
			t_1& f(t_1) & 1\\
			t_2 - t_1 & f(t_2) - f(t_1) & 0\\
			t_3 - t_2 &f(t_3) - f(t_2) & 0
		\end{bmatrix} = \det \begin{bmatrix}
			1 & \frac{f(t_{i_2}) - f(t_{i_1})}{t_{i_2} - t_{i_1}}\\
			1 & \frac{f(t_{i_3}) - f(t_{i_1})}{t_{i_3} - t_{i_1}}
		\end{bmatrix}\\
			&= s(t_{i_2},t_{i_3}) - s(t_{i_1},t_{i_2})
		\end{align*}
		for $i_1 < i_2 < i_3$. Hence, \cref{item:cvx_comp} is equivalent to \cref{def:cvx} and by \cite[Proposition~5.3.1]{hiriart2013convex} it is enough to check the consecutive determinants in \cref{item:cvx_det}.
	\section{Proof to \cref{prop:convc_ct}}
	\label{proof:prop:convc_ct}
		In the following let $L_\alpha(x,y) := \alpha_1 x + \alpha_2 y + \alpha_3$ for any $\alpha \in \mathds{R}^3 \setminus \{0\}$ and $\rk(M^\gamma) = 3$. We begin by noticing that as $
			\Sm_c[L_\alpha(\gamma)] = \Sm_c\left[ M^\gamma \alpha\right]$ and $\Sp_c[L_\alpha(\gamma)] = \Sp_c\left[ M^\gamma \alpha\right]$,
			it follows from \cref{item:CVB_SC} (\cref{item:SCVB_SSC}) that $\gamma \in \textnormal{(S)}\cc{T}$ if and only if $M^\gamma \in \textnormal{(S)}\sgc{3}$ or, equivalently all determinants of the form \cref{eq:det_gamma} share the same (strict) sign. This proves the equivalence between \cref{item:prop_CC,item:prop_compound,item:prop_minor}. 
			
			\cref{item:prop_CC} $\Rightarrow$ \cref{item:prop_PM}: Note that as \cref{eq:det_gamma_consec} are merely $3$-minors of $M^\gamma$, it follows via \cref{item:prop_compound} that these determinants share the same (strict) sign. Moreover, as $\Sm_c[L_\alpha\tilde{\gamma})] (\Sp_c[L_\alpha(\tilde{\gamma})]) \leq 2$ whenever $\alpha_1 = 0$ or $\alpha_2 = 0$, it must hold that $\tilde{\gamma}_1, \tilde{\gamma}_2 \in \textnormal{(S)}\pmu{T}$.
			
			\cref{item:prop_PM}  $\Rightarrow$ \cref{item:prop_polygon}: We begin by remarking that the requirements on $P_\gamma$ are implied by verifying them for $P_{\tilde{\gamma}}$. Therefore, we can assume in the following that $\mathcal{I}_0 = \emptyset$. Further, by our rank assumption, $\gamma_1$ cannot be constant, which is why
			\begin{align*}
				\mathcal{I}_+ := \{ i \in (0:T-1): \; \Delta \gamma_1(i) > 0\}  \quad \text{and} \quad
				\mathcal{I}_- := \{ i \in (0:T-1): \; \Delta \gamma_1(i) < 0\}  
			\end{align*}
			are non-empty sets. Assuming, w.l.o.g., that $0 \in \mathcal{I}_+$, $T-1 \not \in \mathcal{I}_+$  and that all determinants in \cref{eq:det_gamma_consec} are (strictly) positive, we want to show, now, that there exits a $T_p \in (1:T-2)$ such that $\mathcal{I}_+ = (0:T_p)$. Since by assumption $\gamma_1 \in \pmu{T}$, it follows from \cref{lem:pmu_delta} that if no such $T_p$ existed, then we could find  $0 \leq t_1 < t_2 < T- 1$ such that $\Delta \gamma_1(t_1) > 0, \; \Delta \gamma_1(t_1+1) = 0, \; \Delta \gamma_1(t_2) > 0$. 
			To see that this is impossible, note that $\Delta \gamma_2(t_1+1) \neq 0$ by assumption. Thus, an evaluation of \cref{eq:det_gamma_consec} at $t=t_1$ and $t = t_1+1$ yields
			\begin{equation}
				\Delta \gamma_1(t_1) \Delta \gamma_2(t_1+1) > 0 \quad \text{and} \quad -\Delta \gamma_1(t_1+2) \Delta \gamma_2(t_1+1) \geq 0, \label{eq:eval}
			\end{equation}
			and as such $\Delta \gamma_1(t_1+2) = 0$. Inductive application of this argument shows that $\Delta \gamma_1(t_2) = 0$. Analogously, we can show that there exist
			$T_b,T_n \in (T_p+1:T-1)$ such that $\mathcal{I}_- = (T_b:T_n)$. Thus, by \cref{lem:convex}  
			\begin{equation*}
				f:  \{\gamma_1(0),\dots,\gamma_1(T_p+1)\} \to \mathds{R}, \; \gamma_1(i) \mapsto \gamma_2(i)
			\end{equation*}
			is (strictly) convex and
			\begin{equation*}
				g:\{\gamma_1(T_b),\dots, \gamma_1(T_n+1)\}\to \mathds{R}, \; \gamma_1(i) \mapsto \gamma_2(i).
			\end{equation*}
			(strictly) concave. Next, it follows analogously to \cref{eq:eval} that 
			\begin{align*}
				\Delta \gamma_2(T_p+1), \Delta \gamma_2(T_b-1) > 0 \quad \text{and} \quad
				\Delta \gamma_2(T_n+1),  \Delta \gamma_2(T-1) < 0.
			\end{align*}
			Thus, by $\gamma_2 \in \pmu{T}$ and \cref{lem:pmu_delta}, we can conclude that  $\Delta \gamma_2(t) \geq 0$ for $t \in (T_p+1:T_b-1)$ and $\Delta \gamma_2(t) \leq 0$ for $t \in (T_n+1:T-1)$. Hence, $P_\gamma$ is simple and such that $g$ lies above the graph of $f$ and
			\begin{align*}
				\gamma_1(T_p+1) = \dots = \gamma_1(T_b) \quad \text{ and }
				\gamma_1(T_n) = \dots = \gamma_1(T) = \gamma_1(0)
			\end{align*}
			Therefore, by \cref{lem:convex}, $P_\gamma([0,T)])$ is the intersection of the epi-graph of the continuous convex function $\bar{f}$ and the hyper-graph of a continuous concave function $\bar{g}$ as defined in \cref{eq:f_ext}. As these sets are convex \cite[Proposition~IV.1.1.6]{hiriart2013convex}, the same applies to their intersection \cite[Proposition~III.1.2.1]{hiriart2013convex}. 
			In particular, if there exist three co-linear points of $\gamma(0:T-1)$, the convexity of this set implies that also all points with intermediate indices have to lie on the same line, i.e., there exist three consecutive points of $\gamma(0:T-1)$ that have to be co-linear. In the strict case, this is impossible on the sets $(0:T_p+1)$ and $(T_b:T_n+1)$ due to the strict convexity and concavity of $f$ and $g$, respectively (see \cref{lem:convex}). Also on the intervals that include the remaining points, we can exclude this possibility as $\gamma_1 \in \spmu{T}$ requires that $\Delta \gamma_1$ has no consecutive zeros, i.e., $T_b - T_p, T-T_n \leq 2$ by \cref{lem:pmu_delta}.

			\cref{item:prop_polygon} $\Rightarrow$ \cref{item:prop_minor}: By convexity and simplicity, there is no line that can intersect $P_\gamma([0,T))$ more than twice, i.e., $\Sm_c[L_\alpha(\gamma)] =  \Sm_c \left[M^\gamma \alpha\right] \leq 2$ for all $\alpha \in \mathds{R}^3 \setminus \{0\}$ or equivalently $M^\gamma$ is $\cvb{2}$. By \cref{item:CVB_SC}, this requires that all minors in \cref{eq:det_gamma} share the same sign, where the sign is strict if no three points of $\gamma(0:-T-1)$ are co-linear.

			In case that $\rk(M^\gamma) = 2$, $\gamma$ lies on the line segment between two points, $\gamma(t_{\min})$ and $\gamma(t_{\max})$, where $\gamma_i(t_{\min}),\gamma_i(t_{\max}) \in \max{\gamma_i(0:T-1)} \cup  \min{\gamma_i(0:T-1)}$ for $i \in \{1,2\}$.  Hence,  $\Sm_c[L_\alpha(\gamma)]  \leq 2$ for all $\alpha \in \mathds{R}^3$ if and only if $\Delta \gamma_i$ is monotonically increasing/decreasing when traversing from $\gamma(t_{\min})$ to $\gamma(t_{\max})$ and monotonically decreasing/increasing on the way back. In other words, $\Sm_c(\Delta \gamma_i) \leq 2$ for $i \in \{1,2\}$, which by \cref{lem:pmu_delta} is equivalent to $\gamma_1,\gamma_2 \in \pmu{T}$.

		\section{Proof to \cref{lem:pmp_equiv}}
		\label{proof:lem:pmp_equiv}
			\cref{item:pmp_eq} $\Leftrightarrow$ \cref{item:delta_pmp_eq}: Note that $u \in \ell_{\infty}(T)$ satisfying $\Delta u \not \equiv 0$ and  $\varic{\Delta u} \leq 2$ are given by $u \in \ell_\infty(T)$ satisfying $\varic{\Delta u} = 2$. Moreover, since $\Delta (\Conv{g}u) = \Conv{g} (\Delta u)$, it holds that $\varic{\Delta (\Conv{g}u)} = \varic{\Conv{g} (\Delta u)}$
			which is why the equivalence is implied by \cref{lem:pmu_delta}. \\
			\cref{item:subspace_pmp_eq} $\Leftrightarrow$ \cref{item:delta_pmp_eq}: 
				Since $\Delta$ is an invertible operator on the set of $u \in \ell_\infty(T)$ satisfying $\sum_{i=0}^T u(i) = 0$, it follows that for every such $u$ there exists a $\tilde{u} \in \ell_\infty(T)$ with $u = \Delta \tilde{u}$ and vice-versa. Thus, the equivalence follows.

			\section{Proof to \cref{lem:strict_pos_limit}}
			\label{proof:lem:strict_pos_limit}
			By \cref{cor:cvb_2_lags} and the shift-invariance of $\svd{2}(T)$, it holds that $\Conv{{{g}_n}_T} \in \svd{2}(T)$. Therefore, by our discussion in the beginning of \Cref{subsec:PMP_T}, it holds that $\Conv{{g_n}_T} \in \textnormal{S}\pmp{T}$, which by \cref{lem:PMP_T_CC} implies that $\Conv{{g_n}_T} \in \textnormal{S}\cp{T}$. In particular, as $\lim_{n \to \infty}G_n(z) = 1$, \cref{item:cc_limit} follows. Finally, since by \cref{cor:cvd_2}, $\circmat{{g_n}_T} \in \ssgc{3}$ and
			\small{\begin{align*}
			\circmat{{g_n}_T} \begin{bmatrix}
				\gamma_1(0) & \gamma_2(0) & 1\\
				\gamma_1(1) & \gamma_2(1) & 1\\
				\vdots & \vdots & \vdots\\
				\gamma_1(T-1) & \gamma_2(T-1) & 1
			\end{bmatrix} = \begin{bmatrix}
			(\Conv{{g_n}_T} \gamma_1)(0:T-1) & (\Conv{{g_n}_T} \gamma_2)(0:T-1) & p \mathbf{1}_{T} 
			\end{bmatrix}
			\end{align*}}
		\normalsize
			for some $p \in \mathds{R}$, \cref{item:cc_conv} follows from \cref{eq:Cauchy_Binet} and \cref{prop:convc_ct}.

			\section{Proof to \cref{thm:pmp_main}}
			\label{proof:thm:pmp_main}
				In case that $\Delta g = 0$, i.e., $g$ is constant, each item holds and as such they are equivalent. Therefore, we can assume that $\Delta g \not \equiv 0$, which, since $\sum_{t=0}^{T-1} \Delta g(t) = 0$, implies that $\Delta g$ has strictly positive and strictly negative elements.

			\cref{item:pmp_theorem} $\Rightarrow$ \cref{item:PM_g_dg}: Let $u \in \cc{T}$ be such that $u(kT) = e_1$, $u(1+kT) = e_2$, $k \in \mathds{Z}$, and otherwise $u(t)$ is zero.  Since by \cref{lem:PMP_T_CC}, $\Conv{g} \in \textnormal{(S)}\cp{T}$, it holds then that $(\Conv{g} u_1, \Conv{g} u_2) \in \textnormal{(S)}\cc{T}$, where $(\Conv{g} u_1)(t) = g(t)$, $(\Conv{g} u_2)(t) = g(t-1)$, it follows from \cref{prop:convc_ct} that all non-vanishing determinants
				\begin{equation*}
					\det\begin{bmatrix}
						g(t_1) & g(t_1-1) & 1\\
						g(t_2) & g(t_2-1) & 1 \\
						g(t_3) & g(t_3-1) & 1
					\end{bmatrix}, \; t_1 < t_2 < t_3 < t_1 + T
				\end{equation*}
				have the same (strict) sign. Equivalently, the same holds true for all 
				\begin{equation} \label{eq:det_delta_g}
					\det\begin{bmatrix}
						\Delta g(t_1-1) & g(t_1-1) & 1 \\
						\Delta g(t_2-1) & g(t_2-1) & 1 \\
						\Delta g(t_3-1) & g(t_3-1) & 1
					\end{bmatrix}, \; t_1 < t_2 < t_3 < t_1 + T
				\end{equation}
				which by \cref{prop:convc_ct} shows that $(\Delta g,g) \in \textnormal{(S)}\cc{T}$. Thus, it must hold that $\Scm(\Delta g)= 2$, i.e., there exists a $t$ such that $\Delta g(t) \neq 0$ and $\Delta g(t-1) \Delta g(t+1) \leq 0$. Hence,  
				\begin{equation}
					\begin{aligned}
							0 &<  \det \begin{bmatrix}
							\Delta g(t) & \Delta g(t-1)\\
							\Delta g(t+1) & \Delta g(t)
						\end{bmatrix} = \det\begin{bmatrix}
							\Delta g(t-1) & g(t-1) & 1 \\
							\Delta g(t) - \Delta g(t-1) & \Delta g(t-1) & 0\\
							\Delta g(t+1) - \Delta g(t) & \Delta g(t) & 0
						\end{bmatrix}\\ & = \det\begin{bmatrix}
							\Delta g(t-1) & g(t-1) & 1\\
							\Delta g(t) & g(t) & 1\\
							\Delta g(t+1) & g(t+1) & 1
						\end{bmatrix},
					\end{aligned}
				 \label{eq:dg_log}
				\end{equation} 
				i.e., $(\Delta g,g)$ has positive orientation. 
				
				\cref{item:PM_g_dg} $\Rightarrow$ \cref{item:PM_log}: From \cref{prop:convc_ct} and \cref{eq:dg_log}, we receive the inequality conditions as well as that $ \Delta g \in \textnormal{(S)}\pmu{T}$. 
				
				\cref{item:PM_log}  $\Rightarrow$ \cref{item:PM_g_dg}: Since by assumption $\Scm(\Delta g)= 2$ and $\Delta g \in \textnormal{(S)}\pmu{T}$, it holds that $g \in \textnormal{(S)}\pmu{T}$ and each of the sets 
				\begin{align*}
					\mathcal{I}_+ &:= \{ i \in (0:T-1): \; \Delta g(i) > 0\} \\
					\mathcal{I}_- &:= \{ i \in (0:T-1): \; \Delta g(i) < 0\}
				\end{align*}
				must consist of consecutive entries. Therefore, the requirements of Remark~\ref{rem:tilde_gamma} are fulfilled for $\gamma_1 := g$ and $\Delta \gamma_2 :=  g$ and \cref{prop:convc_ct} can be applied even when replacing $\tilde{\gamma}$ with $\gamma$, which in conjunction with \cref{eq:dg_log} proves the claim.  
				
				\cref{item:PM_g_dg} $\Rightarrow$ \cref{item:pmp_theorem}: By \cref{lem:strict_pos_limit,lem:lim_var}, it suffices to prove the strict case, which by \cref{lem:pmp_equiv} and \cref{item:var_varc} is equivalent to showing that $\Delta y := \Conv{g} \Delta u$ fulfills $\varizc{\Delta y}\leq 2$ for all $u \in \ell_{\infty}(T)$ with $\Scm[\Delta u] = 2$. To this end, we assume, w.l.o.g., that there exists an $s \in (0:T-3)$ such that
				$\Delta u(0),\dots,\Delta u(s) \geq 0$ with $\Delta u(0) > 0$ and $\Delta u(s+1),\dots,\Delta u(T-1) \leq 0$ with $\Delta u(s+1) < 0$ (analogous arguments also apply if $s = T-2$). Combining this assumption with the fact that $\Delta {u}(T-1) = -\sum_{i=0}^{T-2} \Delta u(i)$ allows us to express $\Delta y(k:k+2) = v_1 - v_2$ with
				\begin{align*}
					v_1 &:=  \sum_{i=0}^{s} |\Delta u(i)| \begin{bmatrix}
						g(k-i)-g(k-1)\\
						g(k+1-i)-g(k)\\
						g(k+2-i)-g(k+1)
					\end{bmatrix}\\
					v_2 &:= \sum_{j=s+1}^{T-2} |\Delta u(j)| \begin{bmatrix}
						g(k-j)-g(k-1)\\
						g(k+1-j)-g(k)\\
						g(k+2-j)-g(k+1)
					\end{bmatrix}.
				\end{align*}
				By the multi-linearity of the determinant, we arrive then at
				\begin{align*}
					0 &=\det \begin{bmatrix}
						\Delta y(k:k+2) & v_1 &v_2
					\end{bmatrix} 
					=\sum_{i=1}^s |\Delta u(i)| \sum_{j=s+1}^{T-1} |\Delta u(j)|	\det(M_{k,i,j})
				\end{align*}
				for
				\begin{align*}
					\small{M_{k,i,j} := \begin{bmatrix}
							\Delta y(k) & g(k-i)-g(k-1) &  	g(k-j)-g(k-1)\\
							\Delta y(k+1) & g(k+1-i)-g(k) & g(k+1-j)-g(k) \\
							\Delta y(k+2) & g(k+2-i)-g(k+1)&  g(k+2-j)-g(k+1)
					\end{bmatrix}},
				\end{align*}
				which is why by Laplace expansion of $\det(M_{k,i,j})$
				\begin{equation}
					0 = \sum_{i=1}^s |\Delta u(i)| \sum_{j=s+1}^{T-1} |\Delta u(j)| \left[ \Delta y(k) \alpha_{k,i,j} - 	\Delta y(k+1)  \alpha_{k+1,i,j}+ \Delta y(k+2) \alpha_{k+2,i,j}\right] \label{eq:contradiction}
				\end{equation} with
				\begin{align*}
					\alpha_{k+2,i,j} &= \det \begin{bmatrix}
						g(k-i+1) - g(k+1)& g(k-j+1) - g(k+1)\\
						g(k-i+2) - g(k+2)& g(k-j+2) - g(k+2)
					\end{bmatrix} \\ &= \det\begin{bmatrix}
						\Delta g(k+1) &  g(k+1)    & 1  \\
						\Delta g(k-i+1) &  g(k-i+1) & 1\\
						\Delta g(k-j+1)  &  g(k-j+1) &1
					\end{bmatrix} > 0\\
					\alpha_{k+1,i,j} & =  \det \begin{bmatrix}
						g(k-i) - g(k)& g(k-j) - g(k)\\
						g(k-i+2) - g(k+2)& g(k-j+2) - g(k+2)
					\end{bmatrix} \\
					\alpha_{k,i,j} &=  \det \begin{bmatrix}
						g(k-i) - g(k)& g(k-j) - g(k)\\
						g(k-i+1) - g(k+1)& g(k-j+1) - g(k+1)
					\end{bmatrix} \\ &= \det\begin{bmatrix}
						\Delta g(k) &  g(k)    & 1 \\
						\Delta g(k-i) &  g(k-i) & 1\\
						\Delta g(k-j)  &  g(k-j) & 1
					\end{bmatrix} > 0.
				\end{align*}	
				Since $\Delta u(0) \neq 0 \neq \Delta u (s+1)$, it follows that if $\Delta {y}(k+1) = 0$, then \cref{eq:contradiction} requires that 
				\begin{equation}
					\Delta {y}(k)\Delta {y}(k+2) < 0  \label{eq:delta_y_prop}
				\end{equation}
				or $\Delta {y}(k) = \Delta {y}(k+1) = \Delta {y}(k+) = 0$. In the first case, $\Scm[\Delta y] = \Scp[\Delta y]$, while in latter case it follows inductively that $\Delta {y} \equiv 0$. Note that $\Delta y \equiv 0$ if and only if $v_1 = v_2$, in which case \cref{eq:contradiction} has to hold even after substitution with $\Delta y(k) = 1$, $\Delta y(k+1) = 0$ and $\Delta y(k+2) = 1$, which, as just confirmed, is impossible. 
				
				Finally, we will show that if $\Scm[\Delta y] > 2$, then it is possible to construct a $\hat{u} \in \pmu{T}$ such that $\Delta \hat{y} = \Conv{g} \Delta \hat{u}$ does not fulfill \cref{eq:delta_y_prop}. To this end, note that since $\Delta g \in \spmu{T}$ by \cref{item:PM_log}, it holds by assumption that $\Delta g(i) = 0$ if and only if $\Delta g(i-1) \Delta g(i+1) < 0$, i.e., $\Scm(\Delta g) = \Scp(\Delta g) = 2$. Thus, if $\Scm(\Delta y) > 2$, there must exists a $t \in (0,1)$ and $k^\ast$ such that $\Delta \hat{y}: = (1-t)\Delta g + t\Delta y$, $\Delta \hat{y}(k^\ast) = 0$ and $\Delta \hat{y}(k^\ast-1)\Delta \hat{y}(k^\ast+1) > 0$. However, as $\Delta g = \Conv{g}  \Delta u_1$ for $u_1(0:T-1) := e_1$, it follows that $\Delta \hat{y} = \Conv{g}\Delta \hat{u}$ with $\Delta \hat{u} = (1-t)\Delta u_1 + t\Delta u$, which by assumption on $\Delta u(0) > 0$ and $\Delta u(T-1) \leq 0$ fulfills $\Scm(\Delta \hat{u}) \leq 2$. This yields the desired contradiction.

\end{document}